\documentclass[11pt]{amsart}
\usepackage[T1]{fontenc}
\usepackage{lmodern}
\usepackage{microtype}
\usepackage{amssymb,mathtools,bm}
\usepackage{booktabs,array,enumitem}
\usepackage[dvipsnames]{xcolor}
\usepackage[colorlinks=true,linkcolor=MidnightBlue,citecolor=BrickRed,
urlcolor=RoyalBlue]{hyperref}

\usepackage[nameinlink,noabbrev]{cleveref}

\allowdisplaybreaks
\numberwithin{equation}{section}

\newtheorem{theorem}{Theorem}[section]
\newtheorem{proposition}[theorem]{Proposition}
\newtheorem{lemma}[theorem]{Lemma}
\newtheorem{corollary}[theorem]{Corollary}
\theoremstyle{definition}

\theoremstyle{remark}

\newcommand{\R}{\mathbb R}
\newcommand{\Pp}{\mathbb P}
\newcommand{\Om}{\Omega}
\newcommand{\grad}{\nabla}
\newcommand{\dd}{\,\mathrm d}
\newcommand{\ip}[2]{\langle #1,#2\rangle}
\newcommand{\norm}[1]{\lVert #1\rVert}
\newcommand{\abs}[1]{\lvert #1\rvert}

\newcommand{\esssup}{\operatorname*{ess\,sup}}

\title[$W^{1,\infty}$ Stability of Spectral Ritz Projections]
{Nearly Sharp $W^{1,\infty}$ Stability of Polynomial Spectral Ritz Projections}

\author{Yongxing Guo and Yuwen Li}

\address{School of Mathematical Sciences, Zhejiang University,
	866 Yuhangtang Road, Hangzhou, Zhejiang 310058,
	People's Republic of China}

\email{guoyongxing@zju.edu.cn}
\email{liyuwen@zju.edu.cn}

\thanks{This work was supported by the National Natural Science Foundation of China under grant 12471346.}

\subjclass[2020]{65N35, 65N12, 65M70}
\keywords{Legendre spectral method, Ritz projection, Lipschitz stability, Babu\v{s}ka--Shen basis, 
	Jacobi matrix, heat kernel}

\begin{document}
\raggedbottom
	
	\begin{abstract}
		We establish nearly sharp $W^{1,\infty}$ stability estimates for the Legendre spectral Ritz projection associated with Laplacian on a square domain. Its norm grows proportionally to the square root of the polynomial degree, up to a cubic logarithmic factor.  
		The upper bound uses an integral representation of the discrete gradient projection in terms of one-dimensional semigroup kernels and a rank-two Duhamel comparison between the mass-matrix semigroup and its infinite-dimensional counterpart. The lower bound follows from the endpoint concentration of the Legendre kernel.
		As an application, we derive gradient maximum-norm error estimates for the Poisson and heat equations, with spatial factors optimal up to logarithmic terms.
	\end{abstract}
	
	\maketitle
	
	\section{Introduction}\label{sec:introduction}
	
	Spectral methods approximate solutions of differential equations by global
	trigonometric or algebraic polynomials. For smooth solutions, the approximation error can decay faster than any fixed algebraic power of the polynomial degree, with exponential decay in the analytic case. General accounts, from the
	foundational stability and convergence theory to modern Legendre--Galerkin
	algorithms, can be found in
	\cite{GottliebOrszag1977,BernardiMaday1997,
		CanutoHussainiQuarteroniZang2006,ShenTangWang2011}.  For the Poisson equation, the
	Galerkin solution is the Ritz projection in the Dirichlet-energy inner
	product and is therefore contractive in the Dirichlet-energy norm. Stability in $W^{1,p}$ is a
	different question: energy orthogonality gives no direct control of the
	$L^p$ norm of the gradient when $p\ne2$, and the endpoint
	$W^{1,\infty}$ is especially sensitive to the concentration of spectral
	kernels.  This distinction between an approximation estimate for a smooth
	solution and an operator estimate on the same $W^{1,p}$ space is central to the present paper.

    The relevance of Ritz projection stability extends beyond the Poisson equation. In Galerkin approximations of evolution equations with diffusion, the Ritz projection provides a natural error decomposition in which the diffusion contribution of the projection error vanishes by energy orthogonality. Stability in $W^{1,\infty}$ then controls the projected gradient and is particularly relevant to gradient maximum-norm estimates and the analysis of nonlinear transport terms. The present $W^{1,\infty}$ estimates provide a first step toward gradient maximum-norm analysis of more complex PDE systems, such as the Stokes and Navier–Stokes equations, where pressure coupling and the divergence constraint require additional projection estimates. The heat equation illustrates a direct application.

	Maximum-norm error analysis has a well-developed counterpart for finite
	element methods.  For elliptic problems, Scott \cite{Scott1976} derived
	$L^\infty$ error estimates on quasi-uniform triangular meshes, with optimal
	orders for higher-degree elements and a logarithmic factor for piecewise
	linears.  For the Dirichlet Poisson problem on convex polygons, Rannacher
	and Scott \cite{RannacherScott1982} proved that the piecewise linear Ritz
	projection on shape-regular, quasi-uniform meshes is uniformly bounded in
	$W^{1,\infty}$ as the mesh size $h$ tends to zero.  This gives a
	gradient maximum-norm best-approximation estimate with a constant independent
	of $h$, and hence an $O(h)$ gradient error for solutions in
	$W^{2,\infty}$.  The distinction between the function and gradient norms
	matters: this gradient stability does not remove the logarithmic factor
	from the general piecewise linear $L^\infty$ error estimate.
	
	The comparison here is with finite element refinement at fixed local
	polynomial degree.  Its maximum-norm analysis exploits local polynomial
	approximation and weighted estimates for regularized Green functions
	\cite{Scott1976,RannacherScott1982}.  In the spectral method considered
	here, accuracy is increased by raising the degree of a global polynomial
	space on a fixed square.  Local cutoffs and elementwise interpolation do
	not preserve this space, and the constants in fixed-degree finite element
	estimates do not control the limit $N\to\infty$.  More fundamentally, our
	lower bound shows that the spectral Ritz norm on $W^{1,\infty}$ grows at
	least as $N^{1/2}$, whereas the finite element Ritz norm in the setting
	above stays bounded.  Thus the spectral loss is intrinsic to the
	projection, not merely an artifact of an inverse estimate.  This
	operator growth is compatible with exponential convergence for analytic
	solutions, since their best polynomial approximation error decays
	exponentially.
	
	The periodic Fourier case provides a useful comparison because rectangular Fourier truncation is translation invariant and commutes with differentiation.
	The classical M.~Riesz theorem gives uniform $L^p$ boundedness of the
	partial-sum operators for $1<p<\infty$, whereas the one-dimensional
	$L^\infty$ norm grows like the Lebesgue constant, of order $\log N$
	\cite{Zygmund1959}.  On a $d$-dimensional torus, tensor-product truncation
	therefore has $W^{1,p}$ norm bounded independently of $N$ for $1<p<\infty$ and $W^{1,\infty}$ norm of order at most $(1+\log N)^d$.  Moreover, the
	Fourier modes diagonalize both the periodic Laplacian and its heat
	semigroup.  Consequently, elliptic Ritz stability and parabolic spectral
	error estimates reduce directly to Fourier multiplier and approximation
	estimates. Effective maximum principles were established in \cite{Li2021} to obtain maximum-norm bounds for nonlinear Fourier spectral schemes.
	
	Legendre spectral methods on an interval or a box have a different
	structure.  They are adapted to nonperiodic boundary conditions, but they
	are not translation invariant, differentiation does not commute with the
	orthogonal projection, and the kernels concentrate at the endpoints.
	This difference is already visible in classical harmonic analysis:
	Pollard proved that the unweighted Legendre partial-sum operators are
	uniformly bounded on $L^p(-1,1)$ precisely in the restricted range
	$4/3<p<4$ \cite{Pollard1947}, while their $L^\infty$ Lebesgue constants
	grow algebraically, with order $N^{1/2}$ in the Legendre case
	\cite{Cartwright1983}.  Thus the Fourier and Legendre endpoint theories
	cannot be transferred to one another by a routine change of basis.
	
	The numerical-analysis theory of polynomial spectral methods was developed
	through weighted Sobolev approximation estimates for Legendre and
	Chebyshev expansions
	\cite{MadayQuarteroni1981,CanutoQuarteroni1982,BernardiMaday1989} and
	through the analysis of projectors satisfying boundary conditions
	\cite{Maday1990}.  These results yield high-order or exponential elliptic
	error estimates when the exact solution has corresponding additional
	regularity.  The Babu\v{s}ka--Shen basis subsequently made the
	$H^1_0$ Legendre--Galerkin discretization of second- and fourth-order
	problems sparse and computationally efficient \cite{Shen1994}.  None of
	these approximation results, however, identifies the degree dependence of
	the Ritz operator on the zero-trace Lipschitz space when no derivatives
	beyond the first are available. 
	Combining energy stability with a polynomial inverse inequality yields an $N^2$ upper bound for the $W^{1,\infty}$ stability norm, which does not recover the optimal algebraic exponent $1/2$.

	
	Maximum-norm stability and error estimates are also available for
	semidiscrete parabolic finite element equations
	\cite{SchatzThomeeWahlbin1980}.  Such $L^\infty$ estimates concern
	the solution itself and do not directly supply the $W^{1,\infty}$ estimates
	for the global polynomial spaces studied here.  The spectral literature
	includes error estimates for one-dimensional Galerkin semidiscretizations
	with both smooth and nonsmooth initial data
	\cite{deFrutosMunozSola2001}, as well as optimal $L^2$ estimates and sparse
	algebraic systems for space--time Legendre methods
	\cite{Tang2006,ShenWang2007}.  These spectral results are formulated mainly in
	$L^2$, energy, or higher-regularity approximation norms.  In contrast, a
	uniform-in-time gradient maximum-norm estimate for the spatial Legendre--Galerkin
	method requires both $W^{1,\infty}$ stability of the elliptic Ritz map and
	control of the time derivative of the Ritz projection error.  The latter cannot be handled
	sharply by simply combining an inverse inequality with the standard energy
	estimate.
	
	
	
	Establishing $W^{1,\infty}$ stability estimates for the Legendre Ritz projection presents two main difficulties. First, endpoint concentration leads to an unavoidable algebraic loss in the stability bound. Second, the Ritz projection on a square does not factor into one-dimensional projections, and the associated discrete gradient projection couples the two components through the Legendre mass matrix. Consequently, existing one-dimensional projection estimates alone do not yield the required two-dimensional bound.
	
	We establish $W^{1,\infty}$ stability estimates for the Legendre Ritz projection on a square. Its operator norm grows like $N^{1/2}$ up to logarithmic factors, and the algebraic exponent $1/2$ is optimal. 
	The lower-bound construction combines a one-dimensional test function based on the Legendre endpoint kernel with a fixed polynomial in the transverse variable.
	For the upper bound, we combine an integral representation of the discrete gradient projection in terms of one-dimensional semigroup kernels with a rank-two Duhamel comparison between the mass-matrix semigroup and its infinite-dimensional counterpart. The stability bound yields a gradient maximum-norm best-approximation estimate for the Poisson equation.

	We combine this estimate with a negative-norm estimate for the Ritz projection error and with a Hilbert-space convolution bound. This gives a uniform-in-time error estimate for the spatial Galerkin semidiscretization of the heat equation. The negative-norm estimate provides an $N^{-2}$ gain that cancels the $N^2$ inverse-inequality loss. 
	
	\subsection{Main Results}\label{subsec:main-results}
	
	Let $I=(-1,1)$, $\Om=I^2$, and $N\ge2$.  Write $\Pp_N(I)$ for the
	polynomials of degree at most $N$, and define
	\[
	X_N=\Pp_N(I)\cap H^1_0(I),\quad V_N=X_N\otimes X_N,\quad
	\mathcal W=W^{1,\infty}(\Om)\cap H^1_0(\Om).
	\]
	The space $\mathcal W$ consists of Lipschitz functions with zero boundary trace. On this space, Poincar\'e's inequality makes the gradient seminorm equivalent to the full $W^{1,\infty}$ norm.
	For $w\in H^1_0(\Om)$, define $R_Nw\in V_N$ by
	\begin{equation}\label{eq:variational-equation}
		(\grad R_Nw,\grad v_N)_{\Om}=(\grad w,\grad v_N)_{\Om},
		\qquad v_N\in V_N.
	\end{equation}
	The Dirichlet energy is positive definite on $V_N$, so these equations
	have a unique solution. 
	Define the Lipschitz stability constant and the logarithmic factor by
	\begin{equation}\label{eq:Sdef-infty}
		S_{N,\infty}:=\sup_{0\ne w\in\mathcal W}
		\frac{\norm{\grad R_Nw}_{L^\infty(\Om)}}
		{\norm{\grad w}_{L^\infty(\Om)}},
		\qquad \Lambda_N:=1+\log(N+1).
	\end{equation}
	Here and below, vector norms use the Euclidean norm of the components. 
	Throughout this paper, \(A_1\lesssim A_2\) means that \(A_1\le C A_2\) for some generic constant \(C>0\) independent of the degree of polynomials and the quantities under consideration. We write \(A_1\eqsim A_2\) when \(A_1\lesssim A_2\) and \(A_2\lesssim A_1\).

	\begin{theorem}\label{thm:unconditional}
		For every $N\ge2$,
		\begin{equation}\label{eq:endpoint-main}
			N^{1/2}\lesssim S_{N,\infty}\lesssim N^{1/2}\Lambda_N^3.
		\end{equation}
		These bounds also hold for the operator norm of $R_N$ on $W$ equipped with the full $W^{1,\infty}$ norm.
	\end{theorem}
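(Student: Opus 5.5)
The two bounds in \eqref{eq:endpoint-main} call for different tools: an explicit construction for the lower bound, and an operator representation of $\grad R_N$ through one-dimensional kernels for the upper bound. The last sentence of the theorem then follows from Poincar\'e's inequality on $\mathcal W$. That inequality makes $\norm{w}_{W^{1,\infty}}\eqsim\norm{\grad w}_{L^\infty}$ uniformly in $w$. Since $R_Nw\in V_N\subset\mathcal W$, the same equivalence holds for $R_Nw$. Hence the two operator norms are comparable with absolute constants.

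\emph{Lower bound.} I will start in one dimension. Let $\pi_{N-1}$ be the $L^2(I)$ orthogonal projection onto $\Pp_{N-1}(I)$. The one-dimensional Ritz projection $r_N$ onto $X_N$ satisfies $(r_Nu)'=\pi_{N-1}u'$. This is because $\int_I \pi_{N-1}u'=\int_I u'=0$, so $\pi_{N-1}u'$ has a primitive in $X_N$.
\begin{itemize}
\item Take $g=\operatorname{sign}K_{N-1}(1,\cdot)$, where $K_{N-1}(x,y)=\sum_{k<N}\tfrac{2k+1}{2}P_k(x)P_k(y)$.
\item By Cartwright's Lebesgue constant estimate, $(\pi_{N-1}g)(1)=\norm{K_{N-1}(1,\cdot)}_{L^1}\gtrsim N^{1/2}$.
\item Correct $g$ by a bounded amount, for instance by subtracting a constant times a fixed smooth bump, so that $\int_I g=0$. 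Set $u=\int_{-1}^x g$, so that $u\in W^{1,\infty}\cap H^1_0(I)$ with $\abs{u'}\lesssim1$.
\end{itemize}
In two dimensions, put $w(x,y)=u(x)(1-y^2)$. Then $1-y^2\in X_N$ for $N\ge2$. I expect $R_Nw$ to be computable in the form $r_N$-type projections in $x$ tensored with $1-y^2$, plus a coupled correction. The cleanest route is to test against $v_N=\phi(x)(1-y^2)$. This shows that the $x$-derivative of $R_Nw$ along $y=0$ equals $\pi_{N-1}u'$ plus a term involving the operator $(-\partial_x^2+c)^{-1}$ restricted to $X_N$. I would bound that term by an $L^2\to L^\infty$ inverse estimate, which gives $O(1)$ up to harmless factors. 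Evaluating at $x\to1$ then gives $\norm{\grad R_Nw}_{L^\infty}\gtrsim N^{1/2}$.

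\emph{Upper bound.} By symmetry it suffices to bound $\partial_xR_Nw$. Using the Babu\v{s}ka--Shen basis, I will write $\grad R_N$ as the discrete gradient projection $Q_N$: the $L^2$-orthogonal projection of $\grad w$ onto $\grad V_N$. In one dimension the derivative space is $\Pp_{N-1}$ with the zero-mean constraint. Thus $\partial_xR_Nw=\pi_{N-1}^x\pi_{N}^y\partial_xw$ minus a rank-one-in-each-variable correction coming from the mean-zero constraints. That correction couples the two components through the Legendre mass matrix $M$ of $X_N$. I then represent the correction through a Laplace-type integral $\int_0^\infty e^{-tA_N}\,dt$, where $A_N$ is built from $M^{-1}$ and the stiffness matrix. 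This yields products of one-dimensional semigroup kernels $k^N_t(x,x')$.

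For the tensor main term, the known bounds give $\norm{\pi_{N-1}}_{L^\infty\to L^\infty}\lesssim N^{1/2}$, and $\pi_N$ is bounded by $N^{1/2}$ as well. A naive product therefore gives $N$, which is too big. So I must exploit the fact that $\partial_xw$ is a derivative of a zero-trace function: integrate by parts in $y$, and use that $\pi^y$ acting on $\partial_x w$ with $w$ vanishing at $y=\pm1$ only loses logarithms. I expect the Lebesgue constant of the $H^1_0$-adapted projection in the transverse variable to be $O(\Lambda_N)$.

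\emph{Main obstacle.} The hard step is the coupling correction, which is where the rank-two Duhamel comparison enters. I would write
\[
e^{-tA_N}-e^{-tA_\infty}=-\int_0^t e^{-(t-s)A_N}(A_N-A_\infty)e^{-sA_\infty}\,ds,
\]
where $A_N-A_\infty$ has rank two. The infinite-dimensional kernel is essentially the Neumann or Dirichlet heat kernel, with $L^1$-uniform bounds. The rank-two defect then contributes products of endpoint functions whose $L^1$ norms are $O(N^{1/2})$. Integrating in $t$ with a dyadic splitting over $t\in[N^{-2},1]$ produces the three logarithmic factors: one from the time integral and one from each direction. Together these give $S_{N,\infty}\lesssim N^{1/2}\Lambda_N^3$.
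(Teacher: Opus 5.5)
\textbf{Upper bound.} Your plan names the right ingredients, but its structure fails at the two steps that carry the proof.

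First, $\partial_xR_Nw$ is not a tensor projection $\pi^x_{N-1}\pi^y\partial_xw$ minus a low-rank correction coming from the mean-zero constraints. In the basis $v_{ij}=\varphi_i\otimes\varphi_j$, the block $\mathbb T_{11}$ has kernel $\sum_{i,j}\frac{\lambda_j}{\lambda_i+\lambda_j}\chi_i(x_1)\chi_i(y_1)\varphi_j(x_2)\varphi_j(y_2)$, and $\mathbb T_{12}$ has the analogous mixed kernel. Their deviation from any tensor projection is a full-rank, discrete Riesz-type coupling. The Laplace representation therefore has to be applied to the whole operator, as in Lemma~\ref{lem:first-row-kernels}, not to a correction.

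Second, and decisively, no rank-two Duhamel comparison exists for the generator you choose. If $A_N$ is the discrete Laplacian built from $M^{-1}$, the defect $A_N-A_\infty$ is not of low rank. The Legendre matrices of $M^{-1}$ and of its continuous counterpart $K^{-1}$ (the Neumann Laplacian) are both full. The rank-two structure belongs to $M$ itself. $M$ is the Legendre matrix of $K_N=P_NK|_{Y_N}$ with $K=J^*J$, and the full Legendre matrix of $K$ is pentadiagonal. Hence $P_NKQ$ has rank two with coefficients $O(N^{-2})$, see \eqref{eq:epd-ranktwo}.

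This is why the paper writes $\frac{\lambda_j}{\lambda_i+\lambda_j}=\int_0^\infty e^{-t\lambda_i}\lambda_je^{-t\lambda_j}\,\mathrm dt$ with the semigroup $e^{-tM}$. The relevant times extend up to $N^4\Lambda_N$, since $\sigma(M)\subset[cN^{-4},C]$. The paper then compares $e^{-tM}$ with $S(t)=e^{-tK}$, whose multiplier $e^{-t/\xi^2}$ is a high-pass filter, not a heat kernel.

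The subsequent estimates are also of a different kind from what you describe. The main terms need:
\begin{itemize}
\item the weighted Lebesgue bound for $P_N$;
\item the bound $\norm{\Pi_N}_{\infty\to\infty}\lesssim\Lambda_N$, used through $JP_N=\Pi_NJ$ (you correctly anticipate this bound);
\item the $B^{1/2}_{1,1}$ high-pass estimate, which gives $\norm{P_NS(t)}_{\infty\to\infty}\lesssim\Lambda_N\sqrt N F_N(t)^{-1/4}$.
\end{itemize}
The defect terms need time decay of the source amplitudes $\mathcal A_a$, convolved against the decay of $\norm{D_bS(s)\ell_m}_1$. That decay comes from Nash, survival and Davies estimates, unimodality, and summation by parts on the parity chains. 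A defect controlled only by ``endpoint functions with $L^1$ norm $O(N^{1/2})$'' has no decay in $t$, so the time integral would not close at $\sqrt N\Lambda_N^3$.

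\textbf{Lower bound.} Your test function is the right one, but the reduction you describe is not valid. Testing with $\phi(x)(1-y^2)$ does not give a closed one-dimensional equation, because $1-y^2$ is not an eigenvector of the transverse discrete problem once $N\ge4$. After integrating by parts in $y$, the identity involves both $\int_IR_Nw\,(1-y^2)\,\mathrm dy$ and $\int_IR_Nw\,\mathrm dy$. Instead of a single reaction constant $c$, all transverse parameters $\mu_j=\lambda_j^{-1}$ enter, up to size $O(N^4)$. Moreover, an inverse estimate gives $O(1)$ only after you have an $L^2$ smallness that compensates its loss, and you do not say where that smallness comes from.

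The paper, with $g_N=Jf_N$ playing the role of your $u$, proceeds as follows:
\begin{itemize}
\item it expands $R_Nw=\sum_ja_j(r_{N,\mu_j}g_N)(x)\varphi_j(y)$;
\item it proves $\norm{(r_{N,\mu}g_N)'-P_Nf_N}_\infty\lesssim\mu/N$ from the rank-two cross block;
\item it averages against $\psi$, so the errors are weighted by $\sum_j\mu_ja_j^2=\norm{\psi'}_2^2=1$.
\end{itemize}

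A repair closer to your own idea also works. Write $R_Nw=(r_Nu)\otimes(1-y^2)+e$ with $e\in V_N$. The $x$-part of $\grad\bigl(w-(r_Nu)\otimes(1-y^2)\bigr)$ is orthogonal to $\partial_xe$, because $(\mathrm{Id}-\pi_{N-1})u'\perp\Pp_{N-1}$. The $y$-part reduces to $(u-r_Nu,2\int_Ie\,\mathrm dy)_I$. One-dimensional Aubin--Nitsche duality together with \eqref{eq:dual-legendre-tails} bounds this by $N^{-2}\norm{\grad e}_2$. Hence $\norm{\grad e}_2\lesssim N^{-2}$, and Lemma~\ref{lem:gradient-inverse} gives $\norm{\grad e}_\infty\lesssim1$.
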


	
	\subsection{Parabolic Application}
	For $p\in\{2,\infty\}$ and $v\in W^{1,p}(\Om)\cap H^1_0(\Om)$, define $E_N^{1,p}(v):=\inf_{v_N\in V_N}\norm{\grad(v-v_N)}_{L^p(\Om)}.$
	Consider $u_t-\Delta u=f$ on $\Om\times(0,T)$ with homogeneous
	Dirichlet boundary conditions and $f\in L^2(0,T;L^2(\Om))$.  If
	$u\in C([0,T];\mathcal W)\cap W^{1,\infty}(0,T;H^1_0(\Om))$,
	its exact semidiscrete Galerkin approximation, initialized by
	$u_N(0)=R_Nu(0)$, satisfies
	\begin{equation}\label{eq:parabolic-intro}
		\begin{aligned}
			\sup_{0\le t\le T}\norm{u(t)-u_N(t)}_{W^{1,\infty}}
			&\lesssim \sqrt N \Lambda_N^3\sup_{0\le t\le T}E_N^{1,\infty}(u(t))\\
			&\quad+\sqrt{\Lambda_N}\,\esssup_{0<t<T}E_N^{1,2}(u_t(t)).
		\end{aligned}
	\end{equation}
	The precise statement and
	proof appear in Theorem~\ref{thm:parabolic-error}. The spatial term carries the factor $N^{1/2}\Lambda_N^3$ from the endpoint Ritz bound. For the term involving $u_t$, the factor is only $\sqrt{\Lambda_N}$. No gradient maximum-norm heat-semigroup
	estimate is assumed. Stationary solutions show that the spatial factor cannot be reduced to $o(N^{1/2})$. The optimal logarithmic factors remain open. Exponential spatial approximation of both
	$u$ and $u_t$, uniformly in time, gives exponential convergence in
	$C([0,T];W^{1,\infty}(\Om))$.
	
	Section~\ref{sec:operator-representation} introduces the basis and the semigroup representation of the discrete gradient projection. Section~\ref{sec:lower-bounds} proves the lower bound, and Section~\ref{sec:endpoint-upper} proves the upper bound through a rank-two Duhamel comparison. Section~\ref{sec:consequences} gives the elliptic approximation estimates and explains their independence of the chosen basis of $V_N$. Section~\ref{sec:parabolic} establishes the parabolic error estimate and proves optimality of its spatial algebraic exponent.
	
	\section{Operator Representation and Factorization}
	\label{sec:operator-representation}
	
	\subsection{The Discrete Gradient Projection}
	
	Define the discrete gradient projection $\mathbb T_NF \in \nabla V_N$, for
	$F\in L^2(\Om;\R^2)$, by
	\begin{equation}\label{eq:Tfull}
		(\mathbb T_NF,\grad v_N)=(F,\grad v_N),\qquad v_N\in V_N.
	\end{equation}
	It is the orthogonal projection onto $\grad V_N$ in $L^2(\Om;\R^2)$.
	For $w\in\mathcal W$, the defining equations give
	\begin{equation}\label{eq:gradient-projection}
		\grad R_Nw=\mathbb T_N\grad w,\qquad
		S_{N,\infty}\le\norm{\mathbb T_N}_{\infty\to\infty}.
	\end{equation}
	The operator on bounded vector fields is well defined because
	$L^\infty(\Om;\R^2)\subset L^2(\Om;\R^2)$.
	Also, $R_N$ is the identity on $V_N$, so $S_{N,\infty}\ge1$.
	The upper bound will follow from an estimate for the discrete gradient projection.
	
	\subsection{Babu\v{s}ka--Shen Basis and the Mass Matrix}
	\label{subsec:basis-mass}
	We use a basis that is orthonormal with respect to the Dirichlet-energy inner product and has a sparse mass matrix. Let $L_k$ denote the Legendre polynomial of degree $k$:
	\begin{equation}
		L_k(x)=\frac{1}{2^k k!}\frac{\dd^k}{\dd x^k}(x^2-1)^k,\qquad k\ge0.
	\end{equation}
	Define the normalized polynomials by
	$\ell_k(x)=\sqrt{(2k+1)/2}\,L_k(x)$ for $k\ge0$.  Then
	$(\ell_k,\ell_j)_{L^2(I)}=\delta_{kj}$.
	
	We introduce the Babu\v{s}ka--Shen functions from \cite{Shen1994} by
	\begin{equation}\label{eq:eta}
		\eta_k(x)=\frac{L_{k-2}(x)-L_k(x)}{\sqrt{4k-2}},\qquad k\ge 2.
	\end{equation}
	The identity $L_k'-L_{k-2}'=(2k-1)L_{k-1}$ gives
	\begin{equation}\label{eq:eta-der}
		\eta_k'=-\ell_{k-1},\qquad
		(\eta_k',\eta_j')=\delta_{kj}.
	\end{equation}
	Thus, for each $N\ge2$, the functions $\{\eta_k\}_{k=2}^{N}$ form an $H_0^1(I)$-orthonormal basis of $X_N$. We collect them into the vector $\eta=(\eta_2,\ldots,\eta_N)^{\top}$. Its mass matrix
	$M=((\eta_k,\eta_j)_{L^2(I)})_{k,j=2}^N$ has the nonzero entries
	\begin{align}
		M_{kk}&=\frac{2}{(2k-3)(2k+1)},\label{eq:Mdiag}\\
		M_{k,k+2}=M_{k+2,k}
		&=-\frac{1}{(2k+1)\sqrt{(2k-1)(2k+3)}}.\label{eq:Moff}
	\end{align}
	In particular, $M$ is positive definite, with nonzero entries only on the main diagonal and the two second off-diagonals. 
	
	The integration operator provides the corresponding representation in the
	Legendre basis. Define the mean-zero subspace of $L^2(I)$ by
	\[
	Y=L^2_0(I):=\left\{h\in L^2(I):\int_I h(x)\dd x=0\right\},
	\]
	and its polynomial subspace by
	\[
	Y_N:=\Pp_{N-1}(I)\cap Y
	=\operatorname{span}\{\ell_1,\ldots,\ell_{N-1}\}.
	\]
	We write $\mathrm{Id}$ for the identity operator on the function space. 	Let $P_N:L^2(I)\to Y_N$ denote the
	$L^2(I)$-orthogonal projection, given by
	\[
	P_Nh=\sum_{n=1}^{N-1}\ip{h}{\ell_n}_{L^2(I)}\ell_n.
	\]
	Define the integration operator $J:Y\to L^2(I)$ by
	\begin{equation}\label{eq:Jdef}
		(Jh)(x)=\int_{-1}^x h(t)\dd t,\qquad h\in Y.
	\end{equation}
	Its range lies in $H^1_0(I)$. Let $J^*:L^2(I)\to Y$ denote the
	adjoint of $J$ with respect to the $L^2(I)$ inner products on both spaces.
	Define the positive operator $K=J^*J$ on $Y$ and its finite-dimensional
	compression $K_N=P_NK|_{Y_N}$. Since
	$J\ell_n=-\eta_{n+1}$, the matrix of $K_N$ in the Legendre basis is
	exactly $M$.
	
	In the full Legendre basis of $Y$, the matrix of $K$ has the same
	sparsity pattern.  Consequently, for $N\ge3$ and $h\in Y$,
	\begin{equation}\label{eq:crossblock}
		\begin{aligned}
			P_NK(\mathrm{Id}-P_N)h
			&=b_N(h,\ell_N)\ell_{N-2}
			+b_{N+1}(h,\ell_{N+1})\ell_{N-1},\\
			|b_N|+|b_{N+1}|&\lesssim N^{-2}.
		\end{aligned}
	\end{equation}
	We shall also use the classical Bernstein estimate for the normalized
	Legendre polynomials \cite{DLMF}:
	\begin{equation}\label{eq:bernstein}
		|\ell_n(\cos\theta)|
		\lesssim\min\{n^{1/2},(\sin\theta)^{-1/2}\},
		\qquad n\ge1,\quad 0<\theta<\pi.
	\end{equation}
	
	\subsection{Semigroup Integral Representation}
	
	We write the discrete gradient projection in block form.
	For $F=(F_1,F_2)^{\top}\in L^2(\Omega;\mathbb R^2)$, we have
	\begin{equation*}
		\mathbb T_N=
		\begin{pmatrix}
			\mathbb T_{11} & \mathbb T_{12}\\
			\mathbb T_{21} & \mathbb T_{22}
		\end{pmatrix},
		\qquad
		\mathbb T_NF =
		\begin{pmatrix}
			\mathbb T_{11}F_1+\mathbb T_{12}F_2\\
			\mathbb T_{21}F_1+\mathbb T_{22}F_2
		\end{pmatrix}. 
	\end{equation*}
	To obtain the separated semigroup kernel representation below, we diagonalize the one-dimensional mass matrix and use its eigenvectors to construct orthonormal systems associated with the Babu\v{s}ka--Shen basis and its derivative basis. 
	
	We collect the Babu\v{s}ka--Shen basis functions and their negative derivatives into the column vectors
	\[
	\boldsymbol\eta=(\eta_2,\ldots,\eta_N)^{\top},
	\qquad
	\boldsymbol d=(d_2,\ldots,d_N)^{\top},
	\qquad d_k=-\eta_k'=\ell_{k-1}.
	\]
	We index the coordinates of $\mathbb R^{N-1}$ by $2,\ldots,N$.
	For $2\le k\le N$, let $e_k$ denote the coordinate vector with entry $1$ at index $k$ and $0$ elsewhere. Then
	\[
	\boldsymbol\eta(x)^{\top}e_k=\eta_k(x),\qquad
	\boldsymbol d(x)^{\top}e_k=\ell_{k-1}(x).
	\]
	Let \((\lambda_j,q_j)\), \(1\le j\le N-1\), be the eigenpairs of the mass matrix \(M\), with the eigenvectors chosen orthonormal.
	\[
	Mq_j=\lambda_jq_j,\qquad q_i^{\top}q_j=\delta_{ij}.
	\]
	
	For \(i,j\in\{1,2\}\), define the integral kernel \(\mathcal K_{ij}\) of \(\mathbb T_{ij}\) by
	\begin{equation}\label{eq:Kij-def}
		(\mathbb T_{ij}f)(x_1,x_2)
		=
		\int_{I^2}\mathcal K_{ij}(x_1,x_2;y_1,y_2)
		f(y_1,y_2)\,\dd y_1\dd y_2.
	\end{equation}
	Define $\chi_i=q_i^{\top}\boldsymbol d$ and 	$\varphi_i= \lambda_i^{-1/2} q_i^{\top}\boldsymbol\eta$. The identities
	\[
	\int_I\boldsymbol d(x)\boldsymbol d(x)^{\top}\,\dd x=I_{N-1},
	\qquad
	\int_I\boldsymbol\eta(x)\boldsymbol\eta(x)^{\top}\,\dd x=M
	\]
	imply that $\{\chi_j\}_{j=1}^{N-1}$ and $\{\varphi_j\}_{j=1}^{N-1}$ are orthonormal in $L^2(I)$.
	Here $I_{N-1}$ denotes the $(N-1)\times(N-1)$ identity matrix.
	Moreover, $\boldsymbol d=-\boldsymbol\eta'$ gives
	$\varphi_i'=-\lambda_i^{-1/2}\chi_i$.
	
	Define the $L^2(I^2)$-orthonormal basis of $V_N$ by
	$v_{ij}(x_1,x_2)=\varphi_i(x_1)\varphi_j(x_2)$,
	$1\le i,j\le N-1$.  Its derivatives satisfy
	\begin{align}
		\partial_1v_{ij}(x_1,x_2)
		&=-\lambda_i^{-1/2}
		\chi_i(x_1)\varphi_j(x_2),
		\label{eq:vij-x}\\
		\partial_2v_{ij}(x_1,x_2)
		&=-\lambda_j^{-1/2}
		\varphi_i(x_1)\chi_j(x_2).
		\label{eq:vij-y}
	\end{align}
	Using the orthonormality of $\{\chi_i\}$ and
	$\{\varphi_i\}$, we obtain
	\begin{align*}
		(\nabla v_{ij},\nabla v_{k\ell})_{L^2(I^2;\mathbb R^2)}
		&=(\partial_1v_{ij},\partial_1v_{k\ell})_{L^2(I^2)}
		+(\partial_2v_{ij},\partial_2v_{k\ell})_{L^2(I^2)}\\
		&=\bigl(\lambda_i^{-1}+\lambda_j^{-1}\bigr)
		\delta_{ik}\delta_{j\ell}.
	\end{align*}
	Thus \(\{\nabla v_{ij}\}_{i,j}\) is an orthogonal basis of
	\(\nabla V_N\).
	
	Throughout the paper, we use the vectors of basis functions and semigroup kernels
	\begin{equation}\label{eq:heat-synthesis}
		\begin{gathered}
			z^1=\boldsymbol d,\qquad z^0=J\boldsymbol d=-\boldsymbol\eta,\\
			H_t^{ab}(x,y)=z^a(x)^{\top}e^{-tM}z^b(y),
			\qquad a,b\in\{0,1\}.
		\end{gathered}
	\end{equation}
	Here $e^{-tM}$ is the semigroup generated by $-M$, and $t\ge0$ is an auxiliary parameter for the elliptic integral representation below.
	In Section~\ref{sec:parabolic}, $t$ denotes physical time, and the discrete heat-equation semigroup is represented in the tensor Babu\v{s}ka--Shen basis by $e^{-tM^{-1}}\otimes e^{-tM^{-1}}$.
	
	\begin{lemma}\label{lem:first-row-kernels}
		The integral kernels of the first-row blocks $\mathbb T_{11}$ and $\mathbb T_{12}$ are given by
		\begin{align}
			\mathcal K_{11}(x_1,x_2;y_1,y_2)
			&=
			\int_0^\infty
			H_t^{11}(x_1,y_1)H_t^{00}(x_2,y_2)\,\dd t,
			\label{eq:audit-K11}\\
			\mathcal K_{12}(x_1,x_2;y_1,y_2)
			&=
			\int_0^\infty
			H_t^{10}(x_1,y_1)H_t^{01}(x_2,y_2)\,\dd t.
			\label{eq:audit-K12}
		\end{align}
	\end{lemma}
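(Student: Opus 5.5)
Since $\{\grad v_{ij}\}$ is an orthogonal basis of $\grad V_N$ with $\norm{\grad v_{ij}}^2=\lambda_i^{-1}+\lambda_j^{-1}$, I will write $\mathbb T_N$ as an orthogonal expansion in this basis and then convert the reciprocal weight into a time integral. Concretely,
\[
\mathbb T_NF=\sum_{i,j=1}^{N-1}\frac{(F,\grad v_{ij})}{\lambda_i^{-1}+\lambda_j^{-1}}\,\grad v_{ij}.
\]
I will take the first component and use \eqref{eq:vij-x}--\eqref{eq:vij-y}. The $F_1$-contribution to $(\mathbb T_NF)_1$ has coefficient
\[
\frac{\lambda_i^{-1}}{\lambda_i^{-1}+\lambda_j^{-1}}\,\chi_i(x_1)\varphi_j(x_2)\chi_i(y_1)\varphi_j(y_2).
\]
The $F_2$-contribution has coefficient
\[
\frac{\lambda_i^{-1/2}\lambda_j^{-1/2}}{\lambda_i^{-1}+\lambda_j^{-1}}\,\chi_i(x_1)\varphi_j(x_2)\varphi_i(y_1)\chi_j(y_2).
\]
Multiplying numerator and denominator by $\lambda_i\lambda_j$, these weights become $\lambda_j/(\lambda_i+\lambda_j)$ and $\lambda_i^{1/2}\lambda_j^{1/2}/(\lambda_i+\lambda_j)$.

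The key step is the identity $1/(\lambda_i+\lambda_j)=\int_0^\infty e^{-t\lambda_i}e^{-t\lambda_j}\dd t$, which is valid because $M$ is positive definite, so all $\lambda_j>0$.

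Next I will reassemble the sums into matrix form. Since $\chi_i=q_i^\top\boldsymbol d$ and $Q=(q_1,\dots,q_{N-1})$ is orthogonal, the spectral theorem gives
\[
\sum_i e^{-t\lambda_i}\chi_i(x)\chi_i(y)=\boldsymbol d(x)^\top e^{-tM}\boldsymbol d(y)=H^{11}_t(x,y).
\]
For the transverse variable, the factor $\lambda_j$ combines with $\varphi_j=\lambda_j^{-1/2}q_j^\top\boldsymbol\eta$ to give
\[
\sum_j \lambda_j e^{-t\lambda_j}\varphi_j(x)\varphi_j(y)=\boldsymbol\eta(x)^\top e^{-tM}\boldsymbol\eta(y)=H^{00}_t(x,y),
\]
using $z^0=-\boldsymbol\eta$, whose two signs cancel. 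This yields \eqref{eq:audit-K11}.

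For $\mathcal K_{12}$ the weight $\lambda_i^{1/2}\lambda_j^{1/2}$ splits between the two variables. In $x_1$ it gives
\[
\sum_i\lambda_i^{1/2}e^{-t\lambda_i}\chi_i(x_1)\varphi_i(y_1)=\boldsymbol d(x_1)^\top e^{-tM}\boldsymbol\eta(y_1)=-H^{10}_t(x_1,y_1).
\]
In $x_2$ it gives
\[
\sum_j\lambda_j^{1/2}e^{-t\lambda_j}\varphi_j(x_2)\chi_j(y_2)=\boldsymbol\eta(x_2)^\top e^{-tM}\boldsymbol d(y_2)=-H^{01}_t(x_2,y_2).
\]
The two minus signs cancel, and the sign from the derivative formulas (one from the test function and one from the output) also squares away. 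This gives \eqref{eq:audit-K12}.

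Finally, I will justify exchanging the finite sums with $\int_0^\infty$. The sums are finite and each term decays like $e^{-t\lambda_{\min}}$, so the exchange is immediate. The main point requiring care is the sign bookkeeping, together with checking that the sum over $i,j$ in the $\mathcal K_{11}$ case really separates, that is, that the weight depends on $(i,j)$ only through the product form $e^{-t\lambda_i}\lambda_je^{-t\lambda_j}$. The integral identity guarantees exactly this separation.
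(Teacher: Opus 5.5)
Your proposal is correct and follows essentially the same route as the paper. You expand $\mathbb T_N$ in the orthogonal basis $\{\grad v_{ij}\}$, read off the weights $\lambda_j/(\lambda_i+\lambda_j)$ and $\sqrt{\lambda_i\lambda_j}/(\lambda_i+\lambda_j)$ from \eqref{eq:vij-x}--\eqref{eq:vij-y}, and convert them to time integrals by the Laplace representation, with the two signs from $z^0=-\boldsymbol\eta$ cancelling in $H_t^{10}H_t^{01}$; your explicit use of $e^{-tM}=\sum_i e^{-t\lambda_i}q_iq_i^{\top}$ to identify each one-dimensional factor with $H_t^{ab}$ just spells out a step the paper leaves implicit.
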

	
	\begin{proof}		
		Since \(\mathbb T_N\) is the
		\([L^2(I^2)]^2\)-orthogonal projection onto \(\nabla V_N\), for every
		\(F=(F_1,F_2)\in[L^2(I^2)]^2\),
		\begin{equation}\label{eq:TN-modal-projection}
			\mathbb T_NF = \sum_{i,j}
			\frac{(F,\nabla v_{ij})_{L^2(I^2;\mathbb R^2)}}
			{\lambda_i^{-1}+\lambda_j^{-1}}
			\nabla v_{ij}.
		\end{equation}
		
		To determine \(\mathcal K_{11}\), take \(F=(f,0)\) in
		\eqref{eq:TN-modal-projection} and then take the first component. This
		gives
		\begin{align*}
			(\mathbb T_{11}f)(x_1,x_2)
			&=
			\sum_{i,j}
			\frac{\partial_1v_{ij}(x_1,x_2)}
			{\lambda_i^{-1}+\lambda_j^{-1}}
			\int_{I^2}
			\partial_1v_{ij}(y_1,y_2)f(y_1,y_2)
			\,\dd y_1\dd y_2\\
			&=
			\int_{I^2}
			\left[
			\sum_{i,j}
			\frac{\partial_1v_{ij}(x_1,x_2)
				\partial_1v_{ij}(y_1,y_2)}
			{\lambda_i^{-1}+\lambda_j^{-1}}
			\right]
			f(y_1,y_2)\,\dd y_1\dd y_2.
		\end{align*}
		Using \eqref{eq:Kij-def} and \eqref{eq:vij-x}, we obtain
		\begin{align*}
			\mathcal K_{11}(x_1,x_2;y_1,y_2)
			&=
			\sum_{i,j}
			\frac{\lambda_i^{-1}}
			{\lambda_i^{-1}+\lambda_j^{-1}}
			\chi_i(x_1)\chi_i(y_1)
			\varphi_j(x_2)\varphi_j(y_2)\\
			&=
			\sum_{i,j}
			\frac{\lambda_j}{\lambda_i+\lambda_j}
			\chi_i(x_1)\chi_i(y_1)
			\varphi_j(x_2)\varphi_j(y_2).
		\end{align*}
		The scalar Laplace representations
		\[
		\frac{\lambda_j}{\lambda_i+\lambda_j}
		=\int_0^\infty e^{-t\lambda_i}\lambda_je^{-t\lambda_j}\,\dd t,
		\qquad
		\frac{\sqrt{\lambda_i\lambda_j}}{\lambda_i+\lambda_j}
		=\int_0^\infty \sqrt{\lambda_i}e^{-t\lambda_i}
		\sqrt{\lambda_j}e^{-t\lambda_j}\,\dd t,
		\]
		therefore prove \eqref{eq:audit-K11}.  To determine $\mathcal K_{12}$, take
		$F=(0,f)$ in \eqref{eq:TN-modal-projection} and retain the first
		component.  The same calculation gives
		\[
		\mathcal K_{12}(x_1,x_2;y_1,y_2)
		=\sum_{i,j}
		\frac{\sqrt{\lambda_i\lambda_j}}{\lambda_i+\lambda_j}
		\chi_i(x_1)\varphi_i(y_1)
		\varphi_j(x_2)\chi_j(y_2).
		\]
		Since $z^0=-\boldsymbol\eta$, each mixed semigroup kernel carries a
		minus sign; these two signs cancel in $H_t^{10}H_t^{01}$.
		The second Laplace representation therefore proves \eqref{eq:audit-K12}.
	\end{proof}
	
	The following spectral bounds for $M$ will be used to control the large-time decay of $\mathrm e^{-tM}$.
	
	\begin{lemma}
		\label{lem:mass-spectrum}
		For every \(N\ge2\) and every eigenvalue \(\lambda\in\sigma(M)\),
		\begin{equation}
			\label{eq:audit-spectrum}
			N^{-4}\lesssim\lambda\lesssim 1.
		\end{equation}
	\end{lemma}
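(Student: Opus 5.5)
The plan is to read the eigenvalues of $M$ as Rayleigh quotients of the compressed operator $K_N=P_NK|_{Y_N}$. Since the matrix of $K_N$ in the orthonormal Legendre basis $\{\ell_1,\dots,\ell_{N-1}\}$ of $Y_N$ is exactly $M$, every $\lambda\in\sigma(M)$ satisfies
\[
\min_{0\ne h\in Y_N}\frac{\norm{Jh}_{L^2(I)}^2}{\norm{h}_{L^2(I)}^2}\le\lambda\le
\max_{0\ne h\in Y_N}\frac{\norm{Jh}_{L^2(I)}^2}{\norm{h}_{L^2(I)}^2}.
\]
This uses $(K_Nh,h)=(Kh,h)=\norm{Jh}^2$ for $h\in Y_N$. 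Equivalently, for $c\in\R^{N-1}$ and $v=c^\top\boldsymbol\eta\in X_N$, we have $c^\top Mc=\norm{v}_{L^2}^2$ and $\abs{c}^2=\norm{v'}_{L^2}^2$. So $\lambda$ lies between the extreme values of $\norm{v}^2/\norm{v'}^2$ over $X_N$.

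\textbf{Upper bound.} I will use $\abs{(Jh)(x)}\le\int_I\abs{h}\le\sqrt2\norm{h}_{L^2}$, which gives $\norm{Jh}_{L^2}^2\le 4\norm{h}_{L^2}^2$. This is just the Poincar\'e inequality on $H^1_0(I)$, and it yields $\lambda\le4$. Alternatively, Gershgorin applied to \eqref{eq:Mdiag}--\eqref{eq:Moff} gives $\lambda\lesssim1$ directly, because each row has at most three entries, each $O(1)$.

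\textbf{Lower bound.} I need $\norm{v'}_{L^2}\lesssim N^2\norm{v}_{L^2}$ for $v\in X_N\subset\Pp_N(I)$, which is the classical Markov-type $L^2$ inverse inequality for polynomials on an interval. For a self-contained argument, expand $v=\sum_{k\le N}a_k\ell_k$. Then $v'=\sum_k a_k\ell_k'$, and the explicit formula $\ell_k'=\sqrt{2k+1}\sum_{j<k,\ k-j\text{ odd}}\sqrt{2j+1}\,\ell_j$ (from $L_k'=\sum(2j+1)L_j$) gives $\norm{\ell_k'}_{L^2}^2\lesssim k^4$. Cauchy--Schwarz then yields
\[
\norm{v'}\le\sum_k\abs{a_k}\norm{\ell_k'}\lesssim\Bigl(\sum_k k^4\Bigr)^{1/2}\norm{v}\lesssim N^{5/2}\norm{v}.
\]
This crude version loses a factor $N^{1/2}$, which is too weak. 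So the main obstacle is obtaining the sharp exponent $N^2$.

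To get it, I will invoke the standard Legendre inverse inequality $\norm{p'}_{L^2(I)}\le C N^2\norm{p}_{L^2(I)}$ for $p\in\Pp_N$ (Schmidt's inequality, found in Canuto et al.). If a direct route is preferred, I would write $\norm{p'}^2=\norm{p'}^2_{L^2}$ and split it as $\int(1-x^2)\abs{p'}^2$ plus the endpoint part. The weighted piece is bounded by $N^2\norm{p}^2$, since $\sum_k k(k+1)a_k^2$ is the Sturm--Liouville form. The remaining endpoint layer, of width $N^{-2}$, is handled by the Bernstein/Markov bound on $p'$ together with the Nikolskii estimate $\norm{p}_\infty\lesssim N\norm{p}_2$.

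Combining these gives $\norm{v}^2/\norm{v'}^2\gtrsim N^{-4}$ on $X_N$, hence $\lambda\gtrsim N^{-4}$. This completes \eqref{eq:audit-spectrum}.
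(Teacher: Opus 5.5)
Your argument is correct and is essentially the paper's: for an eigenpair $(\lambda,q)$ and $v=q^{\top}\boldsymbol\eta\in X_N$ one has $\lambda=\norm{v}_2^2/\norm{v'}_2^2$, and Poincar\'e's inequality and the $N^2$ inverse inequality give the two bounds. Incidentally, your ``crude'' self-contained attempt does not lose $N^{1/2}$: the exact value is $\norm{\ell_k'}_2^2=(2k+1)k(k+1)/2\eqsim k^3$, not $k^4$, so the same Cauchy--Schwarz step gives $\norm{v'}_2\le\bigl(\sum_{k\le N}\norm{\ell_k'}_2^2\bigr)^{1/2}\norm{v}_2\lesssim N^2\norm{v}_2$, which makes the appeal to Schmidt's inequality or the endpoint-layer argument unnecessary.
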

	
	\begin{proof}
		Let $(\lambda,q)$ be an eigenpair of $M$, with $Mq=\lambda q$, and
		set $v=q^{\top}\boldsymbol\eta\in X_N$.  The definition of $M$ and the
		$H_0^1(I)$-orthonormality of the Babu\v{s}ka--Shen basis give
		\[
		\|v\|_2^2=q^{\top}Mq=\lambda q^{\top}q,
		\qquad
		\|v'\|_2^2=q^{\top}q.
		\]
		Consequently, $	\lambda=\frac{\|v\|_2^2}{\|v'\|_2^2}.$ 
		Since $v\in X_N\subset H_0^1(I)$, Poincar\'e's inequality gives
		$\|v\|_2\lesssim \|v'\|_2$ and hence $\lambda\lesssim 1$.  On the other
		hand, $v\in\mathbb P_N$ and the polynomial inverse inequality gives
		$\|v'\|_2\lesssim N^2\|v\|_2$.  Thus $\lambda\ge cN^{-4}$, which proves
		\eqref{eq:audit-spectrum}.
	\end{proof}

	\section{The Lipschitz Lower Bound}\label{sec:lower-bounds}
	
	Let $k_N(x,y)=\sum_{j=1}^{N-1}\ell_j(x)\ell_j(y)$ denote the integral kernel of $P_N$, so that
	\begin{equation*}
		P_Nf(x)=\int_I k_N(x,y)f(y)\,\dd y.
	\end{equation*}
	Set $Q=\mathrm{Id}-P_N$. We first prove the one-dimensional lower bound.
	\begin{lemma}\label{lem:epd-endpointlower}
		There exists a mean-zero test function $f_N\in L^\infty(I)$ such that
		\begin{equation}\label{eq:PNh-endpoint}
			(P_Nf_N)(1)\ge c\sqrt N, \qquad \norm{f_N}_\infty\le2.
		\end{equation}
	\end{lemma}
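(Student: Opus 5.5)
The plan is to take $f_N$ essentially equal to the sign of the kernel $k_N(1,\cdot)$, corrected so that it has mean zero. Write $g(y)=k_N(1,y)=\sum_{j=1}^{N-1}\ell_j(1)\ell_j(y)$. Since $g\in Y_N$, it has mean zero, and $\ell_j(1)=\sqrt{(2j+1)/2}$. Put $s=\operatorname{sign} g$ and let $m=\frac12\int_I s\,\dd y$, so $|m|\le1$. Define $f_N=s-m$. Then $f_N$ is mean zero and $\norm{f_N}_\infty\le 2$.

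Because $g$ has mean zero, the constant drops out:
\[
(P_Nf_N)(1)=\int_I g\,f_N\,\dd y=\int_I g\, s\,\dd y-m\int_I g\,\dd y=\norm{g}_{L^1(I)} .
\]
So everything reduces to proving the Lebesgue-constant bound $\norm{k_N(1,\cdot)}_{L^1(I)}\ge c\sqrt N$.

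For that bound I will use the Christoffel–Darboux closed form at the endpoint:
\[
\sum_{j=0}^{n}\tfrac{2j+1}{2}L_j(y)=\tfrac{n+1}{2}\,\frac{L_n(y)-L_{n+1}(y)}{1-y}.
\]
Equivalently, $\sum_{j\le n}(2j+1)L_j=P_n^{(1,0)}$, the Jacobi polynomial, times a constant. With $n=N-1$, this gives $g(y)=\frac{N}{2}\frac{L_{N-1}(y)-L_N(y)}{1-y}-\frac12$. The constant $-\tfrac12$ contributes $O(1)$ to the $L^1$ norm and can be ignored.

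Now set $y=\cos\theta$ with $\theta\in(1/N,\pi/2)$ and use the Laplace–Hilb asymptotics for Legendre polynomials:
\[
L_n(\cos\theta)=\sqrt{\tfrac{2}{\pi n\sin\theta}}\cos\bigl((n+\tfrac12)\theta-\tfrac\pi4\bigr)+O\bigl((n\sin\theta)^{-3/2}\bigr).
\]
From this,
\[
L_{N-1}-L_N\approx 2\sqrt{\tfrac{2}{\pi N\sin\theta}}\,\sin\bigl(N\theta-\tfrac\pi4\bigr)\sin(\theta/2).
\]
Dividing by $1-y=2\sin^2(\theta/2)$ gives
\[
|g|\gtrsim \sqrt N\,\theta^{-3/2}\,\bigl|\sin(N\theta-\tfrac\pi4)\bigr|-\text{error}.
\]
Integrate against $\dd y=\sin\theta\,\dd\theta\approx\theta\,\dd\theta$ over $\theta\in(A/N,\pi/2)$, with $A$ a large fixed constant. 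The oscillating factor averages to $2/\pi$ over each period of length $\pi/N$, so the main term gives
\[
\sqrt N\int_{A/N}^{\pi/2}\theta^{-1/2}\,\dd\theta\eqsim\sqrt N .
\]
The error term is at most of order $N^{-1/2}\theta^{-5/2}\cdot\theta$. Its integral is $O(N^{-1/2}(N/A)^{1/2})=O(A^{-1/2})$, which is negligible.

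The main obstacle is making the asymptotic step rigorous with uniform constants. This requires a Darboux/Szegő-type estimate uniform on $\theta\ge A/N$, and a check that $\sin^2(\theta/2)\approx\theta^2/4$ does not spoil constants on $(A/N,\pi/2)$. A possible shortcut is to use the Jacobi asymptotic for $P_{N-1}^{(1,0)}$ directly, via Szegő's Theorem 8.21.13, which gives $|g|$ in the form $\sqrt N\,\theta^{-3/2}|\cos(\cdot)|$ with the error already controlled. Alternatively, I would cite the classical result that the Legendre Lebesgue constant at the endpoint is of order $\sqrt N$ \cite{Cartwright1983}. Since only a lower bound is needed, it suffices to use the asymptotics on a subinterval $\theta\in(A/N,c)$ where the relative error is at most $1/2$.
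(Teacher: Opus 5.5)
Your reduction is the same as the paper's. You use the same mean-zero sign test function, the same observation that the constant drops out because $k_N(1,\cdot)\in Y_N$ has zero mean, and the same Christoffel--Darboux form $k_N(1,y)=\frac N2\frac{L_{N-1}(y)-L_N(y)}{1-y}-\frac12$.

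You differ only in how you extract $\norm{k_N(1,\cdot)}_1\gtrsim\sqrt N$, and one step there fails as written. Suppose you subtract the Hilb asymptotics of $L_{N-1}$ and $L_N$. The two remainders $O((N\theta)^{-3/2})$ are only $O$-bounds, so no cancellation between them can be used. After multiplying by $N/(2(1-y))\eqsim N\theta^{-2}$, the error in $k_N(1,\cos\theta)$ is therefore $O(N^{-1/2}\theta^{-7/2})$, not $O(N^{-1/2}\theta^{-5/2})$. Against $\dd y\approx\theta\,\dd\theta$ on $(A/N,\pi/2)$, this integrates to order $NA^{-3/2}$, which swamps $\sqrt N$. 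Equivalently, the relative error is $O(1/(N\theta^2))$, which is small only for $\theta\gg N^{-1/2}$, not for $\theta\ge A/N$. Your $\theta^{-5/2}$ bound is what the uniform Jacobi asymptotics (Szeg\H{o}'s Theorem 8.21.13) give for $P^{(1,0)}_{N-1}=\frac{L_{N-1}-L_N}{1-y}$, so the shortcut you mention does repair the argument.

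The simpler repair, and the paper's route, is to stay away from the endpoint altogether. In the $\theta$ variable the main term is of order $\sqrt N\,\theta^{-1/2}\abs{\sin(N\theta-\pi/4)}$. Its profile is integrable at $\theta=0$, so the $\sqrt N$ is carried by $\theta\eqsim1$, and nothing is gained near $A/N$. The paper integrates only over $[\pi/3,2\pi/3]$. There the fixed-interval Darboux formula gives a kernel error $O(N^{-1/2})$, and $\int_{\pi/3}^{2\pi/3}\abs{\sin(N\theta-\pi/4)}\,\dd\theta=\frac23+O(N^{-1})$, so no asymptotics uniform up to the endpoint are needed.

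Either way you obtain only $\norm{k_N(1,\cdot)}_1\ge c\sqrt N-C$, which is the bound for large $N$. To cover every $N\ge2$, you still need the paper's closing step: $k_N(1,\cdot)\not\equiv0$ for each $N$, so $c$ can be decreased to handle the finitely many remaining cases.
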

	\begin{proof}
		We first estimate the $L^1(I)$ norm of the endpoint kernel, then choose $f_N$ from its sign. The Christoffel--Darboux formula gives
		\[
		k_N(1,y)+\frac12
		=\frac N2\frac{L_{N-1}(y)-L_N(y)}{1-y}.
		\]
		For $\pi/3\le\theta\le2\pi/3$, the Darboux formula \cite{DLMF}, applied to $L_{N-1}$ and $L_N$, and the cosine difference identity yield
		\[
		\left|
		k_N(1,\cos\theta)+\frac12
		-\frac{\sqrt N\,\sin(N\theta-\pi/4)}
		{\sqrt{2\pi\sin\theta}\,\sin(\theta/2)}
		\right|
		\lesssim N^{-1/2}.
		\]
		On this interval, $\sin\theta$ and $\sin(\theta/2)$ are bounded below by positive constants. Moreover, $|\sin s|$ has period $\pi$ and integral $2$ over each period, so
		\[
		\int_{\pi/3}^{2\pi/3}|\sin(N\theta-\pi/4)|\,\dd\theta
		=\frac23+O(N^{-1}).
		\]
		Changing variables $y=\cos\theta$ therefore gives
		\[
		\begin{aligned}
			\int_I|k_N(1,y)|\,\dd y
			&\ge \int_{\pi/3}^{2\pi/3}|k_N(1,\cos\theta)|\sin\theta\,\dd\theta\\
			&\ge c\sqrt N\int_{\pi/3}^{2\pi/3}|\sin(N\theta-\pi/4)|\,\dd\theta-C\\
			&\ge c\sqrt N-C.
		\end{aligned}
		\]
		For sufficiently large $N$, $C\le(c/2)\sqrt N$, giving a lower bound $(c/2)\sqrt N$. The kernel $k_N(1,\cdot)$ is nonzero for every $N\ge2$, so decreasing $c$ gives
		\[
		\int_I|k_N(1,y)|\,\dd y\ge c\sqrt N,\qquad N\ge2.
		\]
		
		Now define the mean-zero test function by
		\[
		f_N(y):=\operatorname{sgn}k_N(1,y)
		-\frac12\int_I\operatorname{sgn}k_N(1,s)\,\dd s.
		\]
		The subtracted constant is the mean of the first term, hence
		\[
		\int_I f_N(y)\,\dd y=0,\qquad \|f_N\|_\infty\le2.
		\]
		As $P_N1=0$, subtracting this mean leaves the projection unchanged. Thus,
		
		\[
		(P_Nf_N)(1)
		=\int_I k_N(1,y)\operatorname{sgn}k_N(1,y)\,\dd y
		=\int_I|k_N(1,y)|\,\dd y
		\ge c\sqrt N.
		\]
		This proves the lemma.
	\end{proof}
	
	\begin{lemma}\label{prop:linfty-lower}
		For every $N\ge2$, the stability constant satisfies
		\begin{equation}\label{eq:linfty-lower}
			S_{N,\infty}\ge C N^{1/2}.
		\end{equation}
	\end{lemma}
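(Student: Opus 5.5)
The plan is to build an explicit $w\in\mathcal W$ with $\norm{\grad w}_{L^\infty(\Om)}\lesssim1$ whose Ritz projection separates as a product $\bar u(x_1)b(x_2)$. Then $\partial_1R_Nw$ essentially reproduces $(P_Nf_N)(1)$ from Lemma~\ref{lem:epd-endpointlower}.

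\textbf{Construction.} Let $f_N$ be the mean-zero function of Lemma~\ref{lem:epd-endpointlower} and put $g=Jf_N$. Since $f_N$ has mean zero, $g\in H^1_0(I)$, $g'=f_N$ and $\norm{g}_\infty\le4$. For the transverse factor I take $b\in X_N$ to be the discrete principal Dirichlet eigenfunction. It satisfies
\[
(b',\psi')=\mu(b,\psi)\qquad\text{for all }\psi\in X_N,
\]
and is normalized with $\norm{b}_{L^2(I)}=1$. In the notation of Section~\ref{sec:operator-representation}, $b=\varphi_1$ with $\mu=\lambda_1^{-1}$, where $\lambda_1$ is the largest eigenvalue of $M$. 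Set $w(x_1,x_2)=g(x_1)b(x_2)\in\mathcal W$. Then
\[
\norm{\grad w}_\infty\le 2\norm{b}_\infty+4\norm{b'}_\infty\lesssim1,
\]
provided $b$, $b'$ and $\mu$ are bounded uniformly in $N$. I expect this to follow from the spectral convergence of the discrete first eigenpair to $(\cos(\pi x/2),\pi^2/4)$, together with a polynomial inverse estimate applied to the difference $b-\cos(\pi\cdot/2)$. The same convergence gives $|b(0)|\gtrsim1$ for large $N$.

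\textbf{Separation.} For a test function $v=\phi(x_1)\psi(x_2)$ with $\phi,\psi\in X_N$, we have
\[
(\grad(\bar u\,b),\grad v)=(\bar u',\phi')(b,\psi)+(\bar u,\phi)(b',\psi').
\]
Replacing $(b',\psi')$ by $\mu(b,\psi)$ gives
\[
(\grad(\bar u\,b),\grad v)=(b,\psi)\bigl[(\bar u',\phi')+\mu(\bar u,\phi)\bigr],
\]
and the identical computation applies to $w=g\,b$. Hence $R_Nw=\bar u\,b$, where $\bar u\in X_N$ solves the one-dimensional problem
\[
(\bar u',\phi')+\mu(\bar u,\phi)=(g',\phi')+\mu(g,\phi)\qquad\text{for all }\phi\in X_N.
\]

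\textbf{Comparison with the pure 1D Ritz projection.} Let $R^1g\in X_N$ be the $H^1_0$ Ritz projection of $g$. Its derivative is the $L^2$ projection of $g'$ onto $X_N'=Y_N$, so $(R^1g)'=P_Nf_N$. The error $e=\bar u-R^1g$ satisfies
\[
(e',\phi')+\mu(e,\phi)=\mu(g-R^1g,\phi)\qquad\text{for all }\phi\in X_N.
\]
Energy stability and the Aubin--Nitsche bound $\norm{g-R^1g}_2\lesssim N^{-1}\norm{g'}_2$ then give $\norm{e'}_2\lesssim N^{-1}$. The polynomial inverse inequality $\norm{p}_\infty\lesssim N\norm{p}_2$ on $\Pp_{N-1}$ yields $\norm{e'}_\infty\lesssim1$. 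Consequently
\[
\partial_1R_Nw(1,0)=b(0)\bigl[(P_Nf_N)(1)+e'(1)\bigr]\ge c\,b(0)\sqrt N-C\gtrsim\sqrt N
\]
for $N$ large. For small $N$ the bound $S_{N,\infty}\ge1$ suffices, and dividing by $\norm{\grad w}_\infty\lesssim1$ proves \eqref{eq:linfty-lower}.

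\textbf{Main obstacle.} The step I expect to be hardest is the uniform control of the discrete eigenfunction $b$: bounding $\norm{b}_{W^{1,\infty}}$ and $\mu$ uniformly in $N$, and proving $|b(0)|\gtrsim1$. If this proves awkward, the fallback is to take the fixed polynomial $b=1-x_2^2$. Its Galerkin equation does not separate exactly, so one would then write the coupling error as a perturbation and bound its $W^{1,\infty}$ size by $O(1)$ using energy estimates plus inverse inequalities, in the same way as the estimate for $e$ above.
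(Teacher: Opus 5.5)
Your argument is correct, and it takes a genuinely different route from the paper's. The paper uses the fixed transverse factor $\psi=\eta_2\propto1-y^2$, which is your fallback. It handles the lack of separation by expanding $\psi=\sum_ja_j\varphi_j$ in all discrete eigenfunctions. Each mode then separates exactly, $R_Nw_N=\sum_ja_j(r_{N,\mu_j}g_N)(x)\varphi_j(y)$, and $\tfrac25P_Nf_N$ is recovered by integrating $\partial_xR_Nw_N$ against $\psi$. Because the $\mu_j$ range up to order $N^4$, the paper controls each defect through the rank-two formula \eqref{eq:crossblock}, which gives $\norm{(r_{N,\mu}g_N)'-P_Nf_N}_\infty\lesssim\mu/N$. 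It then sums these defects using $\sum_j\mu_ja_j^2=\norm{\psi'}_2^2=1$.

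Your choice $b=\varphi_1$ leaves a single reaction parameter $\mu\in[\pi^2/4,5/2]$. So your cruder energy, Aubin--Nitsche and inverse-inequality bound $\norm{e'}_\infty\lesssim1$ suffices, and the cross-block structure of $K$ is never used. Your estimate would also carry your fallback through. With $e_\mu:=r_{N,\mu}g-R^1g$, the same energy argument gives $\norm{e_\mu'}_2\le\sqrt{\mu/2}\,\norm{g-R^1g}_2$ for every $\mu>0$, hence $\norm{e_\mu'}_\infty\lesssim\sqrt\mu$. Since $\sum_ja_j^2\sqrt{\mu_j}\le(\sum_ja_j^2)^{1/2}(\sum_j\mu_ja_j^2)^{1/2}\lesssim1$, the defect sum stays bounded. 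What the paper's fixed $\psi$ buys is that $\norm{\grad w_N}_\infty\lesssim1$ is immediate. With an $N$-dependent $b$ you have to prove it.

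That step is easier than eigenpair convergence. As written, your route also has a slip: the inverse inequality cannot be applied to $b-\cos(\pi\cdot/2)$ itself, since it is not a polynomial. You would need a polynomial intermediary and a Babu\v{s}ka--Osborn type $H^1$ rate. A direct argument is simpler:
\begin{enumerate}
\item The Rayleigh quotient of $1-x^2\in X_N$ gives $\mu\le5/2$, and then $\norm{b}_\infty\le\norm{b'}_1\le\sqrt{2\mu}$.
\item Test the eigenvalue equation $(b',\phi')=\mu(b,\phi)$ with $\phi=Js$, $s\in Y_N$. This gives $b'=\mu P_NJ^*b$.
\item Here $\norm{J^*b}_\infty\le2\norm{b}_1\lesssim1$. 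Moreover, $J^*b$ is a polynomial of degree at most $N+1$, so its only modes outside $Y_N$ are $\ell_N$ and $\ell_{N+1}$. Their coefficients satisfy $(J^*b,\ell_n)=(b,J\ell_n)=-(b,\eta_{n+1})=O(N^{-1})$, because $\norm{\eta_k}_2^2=M_{kk}\lesssim k^{-2}$.
\item Hence $\norm{(\mathrm{Id}-P_N)J^*b}_\infty\lesssim N^{-1/2}$, and therefore $\norm{b'}_\infty\lesssim1$.
\end{enumerate}
Finally, you do not need $\abs{b(0)}\gtrsim1$. Evaluate $\partial_1R_Nw(1,\cdot)=\bar u'(1)\,b$ at a maximizer of $\abs{b}$, using $\norm{b}_\infty\ge2^{-1/2}\norm{b}_2=2^{-1/2}$.
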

	\begin{proof}
		Let $f_N$ be the mean-zero test function from Lemma~\ref{lem:epd-endpointlower}. 
		Recall that $P_N$ is the $L^2(I)$-orthogonal projection onto $Y_N$, $Q=\mathrm{Id}-P_N$, and $K_N=P_NK|_{Y_N}$. Define
		\[
		g_N=Jf_N,\qquad \psi(y)=\eta_2(y)=\sqrt{3/8}(1-y^2),\qquad w_N(x,y)=g_N(x)\psi(y).
		\]
		The zero mean of $f_N$ gives $g_N(\pm1)=0$, and hence
		\[
		w_N\in\mathcal W,\qquad \norm{\nabla w_N}_\infty\lesssim 1.
		\]
		We shall prove $\sqrt N\lesssim\norm{\nabla R_Nw_N}_\infty+N^{-1}$.
		
		For $\mu\ge0$, define the reaction Ritz projection $r_{N,\mu}g_N\in X_N$ by
		\[
		((r_{N,\mu}g_N)',v')+\mu(r_{N,\mu}g_N,v)
		=(g_N',v')+\mu(g_N,v),\qquad v\in X_N.
		\]
		Set $q=(r_{N,\mu}g_N)'\in Y_N$. Testing with $v=Js$, $s\in Y_N$, and using $r_{N,\mu}g_N=Jq$ gives
		\[
		(q,s)+\mu(Kq,s)=(f_N,s)+\mu(Kf_N,s).
		\]

		Since $K_N=P_NK|_{Y_N}$, this is equivalent to
		\[
		(\mathrm{Id}+\mu K_N)q=P_Nf_N+\mu P_NKf_N.
		\]
		Using \(f_N=P_Nf_N+Qf_N\), we split the right-hand side as follows:
		\[
		\begin{aligned}
			(\mathrm{Id}+\mu K_N)q
			&=P_Nf_N+\mu P_NKf_N\\
			&=P_Nf_N+\mu P_NK(P_N+Q)f_N\\
			&=P_Nf_N+\mu P_NKP_Nf_N+\mu P_NKQf_N\\
			&=(\mathrm{Id}+\mu K_N)P_Nf_N+\mu P_NKQf_N.
		\end{aligned}
		\]
		Hence $(\mathrm{Id}+\mu K_N)(q-P_Nf_N)=\mu P_NKQf_N,$ which yields the decomposition
		\begin{equation}\label{eq:epd-reaction-defect}
			q=P_Nf_N+\mu(\mathrm{Id}+\mu K_N)^{-1}P_NKQf_N.
		\end{equation}
		The rank-two formula \eqref{eq:crossblock} and $\norm{f_N}_2\lesssim 1$ imply
		\[
		\norm{P_NKQf_N}_2
		\lesssim N^{-2}\bigl(|(f_N,\ell_N)|+|(f_N,\ell_{N+1})|\bigr)
		\lesssim N^{-2}.
		\]
		Since $K_N$ is positive and self-adjoint on $Y_N$,
		\[
		\norm{(\mathrm{Id}+\mu K_N)^{-1}}_{2\to2}\le1.
		\]
		Combining this with $\norm v_\infty\lesssim N\norm v_2$ for $v\in Y_N$ gives
		\begin{equation}\label{eq:epd-reaction-error}
			\norm{(r_{N,\mu}g_N)'-P_Nf_N}_\infty
			\lesssim N\mu\norm{P_NKQf_N}_2
			\lesssim \mu/N.
		\end{equation}
		
		Recall the $L^2(I)$-orthonormal basis $\{\varphi_j\}_{j=1}^{N-1}$ constructed in Section~\ref{sec:operator-representation}, and set $\mu_j=\lambda_j^{-1}$. The derivative orthogonality of this basis gives
		\[
		(\varphi_j',v')=\mu_j(\varphi_j,v),\qquad v\in X_N,\quad 1\le j\le N-1.
		\]
		Define $a_j=(\psi,\varphi_j)$. Since $\psi\in X_N$, we have
		\[
		\psi=\sum_{j=1}^{N-1}a_j\varphi_j,\qquad
		\sum_{j=1}^{N-1}a_j^2=\norm\psi_2^2=\frac25,\qquad
		\sum_{j=1}^{N-1}\mu_j a_j^2=\norm{\psi'}_2^2=1.
		\]	
		Define the coefficient functions
		\[
		\tau_k(x):=\int_I R_Nw_N(x,y)\varphi_k(y)\dd y\in X_N,\qquad 1\le k\le N-1.
		\]
		Then $R_Nw_N$ can be written as $R_Nw_N(x,y)=\sum_{k=1}^{N-1}\tau_k(x)\varphi_k(y).$
		Testing the Ritz equation \eqref{eq:variational-equation} with $v(x)\varphi_j(y)$, $v\in X_N$, gives
		\[
		\begin{aligned}
			&\sum_{k=1}^{N-1}\bigl((\tau_k',v')_I(\varphi_k,\varphi_j)_I
			+(\tau_k,v)_I(\varphi_k',\varphi_j')_I\bigr)\\
			&\qquad=(g_N',v')_I(\psi,\varphi_j)_I
			+(g_N,v)_I(\psi',\varphi_j')_I.
		\end{aligned}
		\]
		Orthogonality gives
		\[
		(\tau_j',v')_I+\mu_j(\tau_j,v)_I
		=a_j\bigl((g_N',v')_I+\mu_j(g_N,v)_I\bigr),\qquad v\in X_N.
		\]
		The definition and uniqueness of the reaction Ritz projection imply $\tau_j=a_jr_{N,\mu_j}g_N$, hence
		\[
		R_Nw_N(x,y)=\sum_{j=1}^{N-1}a_j(r_{N,\mu_j}g_N)(x)\varphi_j(y).
		\]
		To extract the term $P_Nf_N$, average the $x$ derivative against $\psi$ and define
		\[
		\varepsilon_N(x):=\sum_{j=1}^{N-1}a_j^2\bigl((r_{N,\mu_j}g_N)'(x)-P_Nf_N(x)\bigr).
		\]
		Orthogonality and \eqref{eq:epd-reaction-error} then give
		\begin{align}
			\int_I\partial_xR_Nw_N(x,y)\psi(y)\dd y
			&=\frac25P_Nf_N(x)+\varepsilon_N(x),\label{eq:endpoint-modal-average}\\
			\norm{\varepsilon_N}_\infty\lesssim \frac 1N\sum_{j=1}^{N-1}\mu_j a_j^2
			&\lesssim N^{-1}.\label{eq:endpoint-E-small}
		\end{align}
		Because $R_Nw_N$ is a polynomial, its gradient has the same supremum on $\Omega$ and $\overline\Omega$. Evaluating at $x=1$ and applying H\"older's inequality yields
		\[
		\begin{aligned}
			\norm\psi_1\norm{\nabla R_Nw_N}_\infty &\ge |\frac25P_Nf_N(1)+\varepsilon_N(1)|\\
			&\ge \frac25(P_Nf_N)(1)-\norm{\varepsilon_N}_\infty.
		\end{aligned}
		\]
		Since $\sqrt N\lesssim(P_Nf_N)(1)$ and $\norm{\varepsilon_N}_\infty\lesssim N^{-1}$, we obtain $\sqrt N\lesssim\norm{\nabla R_Nw_N}_\infty+N^{-1}$. Combining this estimate with $\norm{\nabla w_N}_\infty\lesssim1$ yields $N^{1/2}\lesssim S_{N,\infty}$. For the remaining finitely many $N\ge2$, use $R_Nv=v$ on $V_N$, hence $S_{N,\infty}\ge1$, and the bound follows for all $N\ge2$.
	\end{proof}

	\section{Upper Bound and Approximation}
	\label{sec:endpoint-upper}
	\label{subsec:endpoint-duhamel}
	We prove an $L^\infty$ bound for $\mathbb T_N$.
	By coordinate symmetry, it suffices to estimate the first-row kernels given by Lemma \ref{lem:first-row-kernels}. 
	\begin{theorem}\label{thm:epd-main}
		\begin{equation}\label{eq:audit-infty-profile}
			\|\mathbb T_N\|_{\infty\to\infty}\lesssim \sqrt N \Lambda_N^3
		\end{equation}
	\end{theorem}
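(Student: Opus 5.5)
The plan is to bound $\norm{\mathbb T_N}_{\infty\to\infty}$ by the maximal $L^1$ norms of the block kernels. By coordinate symmetry, and since $\mathbb T_{21},\mathbb T_{22}$ are obtained from $\mathbb T_{12},\mathbb T_{11}$ by swapping variables, it suffices to prove
\[
\sup_{x}\int_{I^2}|\mathcal K_{1j}(x;y)|\,\dd y\lesssim \sqrt N\Lambda_N^3,\qquad j=1,2 .
\]
Lemma~\ref{lem:first-row-kernels} expresses $\mathcal K_{1j}$ as a time integral of products of one-dimensional kernels. Fubini and the product structure then reduce everything to
\[
\int_0^\infty A_t^{ab}\,A_t^{a'b'}\,\dd t,\qquad
A_t^{ab}:=\sup_{x\in I}\int_I|H_t^{ab}(x,y)|\,\dd y,
\]
with $(a,b,a',b')=(1,1,0,0)$ or $(1,0,0,1)$. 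The work is to estimate the one-dimensional $L^\infty\to L^\infty$ norms of the operators with kernels $H_t^{ab}$ separately for small, intermediate and large $t$.

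\emph{Step 1 (identification with $K_N$).} Since $z^1=\boldsymbol d$ is the Legendre basis $\ell_1,\dots,\ell_{N-1}$ and $M$ is the matrix of $K_N$, the operator with kernel $H_t^{11}$ is $e^{-tK_N}P_N$ acting on $L^2(I)$. Likewise, $H_t^{01}$ corresponds to $Je^{-tK_N}P_N$, $H_t^{10}$ to $e^{-tK_N}P_NJ^*$, and $H_t^{00}$ to $Je^{-tK_N}P_NJ^*$.

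\emph{Step 2 (Duhamel comparison).} Compare $e^{-tK_N}P_N$ with the infinite-dimensional semigroup. Differentiating $s\mapsto e^{-(t-s)K_N}P_Ne^{-sK}$ gives
\[
P_Ne^{-tK}-e^{-tK_N}P_N=-\int_0^t e^{-(t-s)K_N}\,P_NKQ\,e^{-sK}\,\dd s,
\]
and by \eqref{eq:crossblock} the middle factor $P_NKQ$ has rank two with size $N^{-2}$. The main term $P_Ne^{-tK}$ is handled via $K^{-1}$, which is the Neumann Laplacian on mean-zero functions. Thus $e^{-tK}$ is a function of a positive compact operator, equal to the identity plus a smoothing part whose $L^\infty$ norm I expect to bound uniformly (possibly with a $\log$ loss) through its spectral (cosine) representation. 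Composing with $P_N$ then costs the Legendre Lebesgue constant $\sqrt N$, which is where the factor $\sqrt N$ enters.

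For the rank-two remainder, I would use $\norm{\ell_{N-2}}_\infty,\norm{\ell_{N-1}}_\infty\lesssim\sqrt N$, $\norm{e^{-(t-s)K_N}}_{2\to2}\le1$, $\norm{v}_\infty\lesssim N\norm v_2$ on $Y_N$, and $|(e^{-sK}h,\ell_N)|\lesssim\norm h_\infty$. These give an $L^\infty$ bound of order $tN^{-2}\cdot N$. That is harmless for $t\lesssim N$, but for larger $t$ one must exploit decay instead: since $\lambda_{\min}\gtrsim N^{-4}$ (Lemma~\ref{lem:mass-spectrum}), I would use $e^{-t\lambda}$ and the fact that the high modes of $K$ carry eigenvalues $\sim k^{-2}$.

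\emph{Step 3 (the factors $H^{0b}$ and the time integral).} For the kernels containing $J$, the extra integration makes the kernels bounded. I would aim for
\[
A_t^{00}\lesssim \min\{1,\ t^{-1}\},\qquad
A_t^{11}\lesssim \sqrt N\,\Lambda_N\min\{1,\ t^{-1}\},
\]
and similar bounds for the mixed kernels. For the derivative factor, the decay for large $t$ comes from $\lambda e^{-t\lambda}\lesssim t^{-1}$, while the cutoff at $t\gtrsim N^4$ comes from the spectral gap. Integrating the product over $t\in[0,N^4]$ against the relevant $t^{-1}$-type decay should produce at most two further factors of $\Lambda_N$. The tail $t>N^4$ is exponentially small.

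\emph{Expected obstacle.} The hard step is Step~2 in the intermediate regime $1\lesssim t\lesssim N^4$. There the crude Duhamel remainder bound grows in $t$, so it must be balanced against the decay of $e^{-(t-s)K_N}$ acting on the specific vectors $\ell_{N-2},\ell_{N-1}$, which are essentially eigen-directions with eigenvalue $\sim N^{-2}$. I would first try expanding $\ell_{N-1}$ in the eigenvectors $q_j$ and using the Bernstein bound \eqref{eq:bernstein} to control $\norm{e^{-\tau K_N}\ell_{N-1}}_\infty$ by $\sqrt N\min\{1,(\tau N^{-2})^{-1}\}$. Doing this carefully and summing dyadically in $t$ is where I expect the remaining logarithmic factor to appear.
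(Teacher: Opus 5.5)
\textbf{Gap 1: the Step~3 targets.} Your reduction to $\int_0^\infty(A_t^{11}A_t^{00}+A_t^{10}A_t^{01})\,\dd t$, the identification of $H_t^{11}$ with $e^{-tK_N}P_N$, and the Duhamel comparison through the rank-two coupling $P_NKQ$ are the paper's framework. However, your decay targets in Step~3 are false, and the real difficulty lies exactly there.

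The operator with kernel $H_t^{11}$ multiplies $\chi_i$ by $e^{-t\lambda_i}$, with no factor $\lambda_i$. In $\mathcal K_{11}=\sum_{i,j}\frac{\lambda_j}{\lambda_i+\lambda_j}\chi_i(x_1)\chi_i(y_1)\varphi_j(x_2)\varphi_j(y_2)$ the factor $\lambda_j$ belongs to the $H^{00}$ side, so ``$\lambda e^{-t\lambda}\lesssim t^{-1}$'' gives nothing for $A^{11}$. Worse, $\lambda_{\min}(M)\le M_{NN}\lesssim N^{-2}$. Testing $e^{-tK_N}P_N$ on the eigenfunction $q^{\top}\boldsymbol d$ gives $A_t^{11}\ge e^{-t\lambda_{\min}}\gtrsim1$ for $t\le N^2$, which contradicts $A_t^{11}\lesssim\sqrt N\Lambda_N t^{-1}$ at $t=N^2$.

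So on $1\ll t\ll N^4$ there is no factor-wise $t^{-1}$ decay for $A^{11}$. What must be shown is that the \emph{product} still decays like $t^{-1}$, by trading decay between the factors. The paper proves $h_{11}\lesssim\Lambda_N\sqrt N\,F_N(t)^{-1/4}$ and $h_{00}\lesssim\Lambda_N(1+t)^{-1}F_N(t)^{1/4}$; the $F_N^{1/4}$ loss comes from the Duhamel error. It also proves $h_{10}\lesssim\Lambda_N\sqrt N(1+t)^{-1/2}$ and $h_{01}\lesssim\Lambda_N(1+t)^{-1/2}$. Your plan is missing the ingredients these bounds need:
\begin{enumerate}
\item For the slow decay of $P_NS(t)$ on $L^\infty$: the high-pass bound $\norm{S(t)f}_1\lesssim t^{-1/4}\norm{f_{\rm e}}_{B^{1/2}_{1,1}}$, the weighted fractional Bernstein inequality of Lemma~\ref{lem:epd-fractional}, and the weighted Lebesgue bound of Lemma~\ref{lem:epd-Lebesgue}.
\item For the $J$-factors: $JP_N=\Pi_NJ$ with $\norm{\Pi_N}_{\infty\to\infty}\lesssim\Lambda_N$. ``The extra integration makes the kernels bounded'' gives $A^{00}_t\lesssim1$ but no decay, and any route through $\norm{P_N}_{\infty\to\infty}\sim\sqrt N$ puts an extra $\sqrt N$ into the product.
\end{enumerate}

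\textbf{Gap 2: the remainder heuristic.} $\ell_{N-1}$, i.e.\ $e_N$, is not close to an eigenvector of $K_N$. Its Rayleigh quotient is $\sim N^{-2}$, but $e^{-\tau M}e_{k_{\mathrm{end}}}$ is a diffusion on the parity chain (length $\sim N^2$ in the coordinate $s_k$), started one edge of length $\sim N$ from an absorbing end. Its entries are nonnegative and $\ell_{k-1}(1)=\sqrt{\omega_k/2}$, so $\mathcal A_1(\tau)=\sqrt{\omega_{k_{\mathrm{end}}}/2}\,q_{k_{\mathrm{end}}}(\tau)$ exactly. This is a survival mass, which decays like $N/\sqrt\tau$ for $N^2\lesssim\tau\ll N^4$. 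The rate $(\tau/N^2)^{-1}$ you hope to get from an eigen-expansion is therefore unattainable.

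The paper instead obtains $\mathcal A_1\lesssim\sqrt N(1+t/N^2)^{-1/2}$ and the cancellation bound $\mathcal A_0\lesssim\sqrt{\Lambda_N}N^{-1}(1+t/N^2)^{-3/4}$. These come from the Nash, survival and Davies estimates on the chains, together with unimodality and summation by parts. It combines them with the decay of the omitted modes, $\norm{S(t)\ell_m}_1\lesssim(1+t/m^2)^{-3/4}$ and $\norm{JS(t)\ell_m}_1\lesssim m^{-1}(1+t/m^2)^{-5/4}$. Your bound $|(e^{-sK}h,\ell_N)|\lesssim\norm{h}_\infty$ throws that decay away. Only these rates, convolved in time, make the error kernels fit the $F_N^{\mp1/4}$ bookkeeping, for example $\sqrt N\,F_N^{-1/4}$ for $(a,b)=(1,1)$.
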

	For \(a,b\in\{0,1\}\), define
	\begin{equation}\label{eq:epd-Hdef}
		h_{ab}(t):=\sup_{x\in I}\int_I|H_t^{ab}(x,y)|\dd y,
		\qquad t\ge0,
	\end{equation}
	and let \(F_N(t)=1+t/N^2\). We need to prove
	\begin{equation}\label{eq:epd-four-targets}
		\begin{aligned}
			h_{11}(t)&\lesssim \Lambda_N\sqrt N\,F_N(t)^{-1/4},&
			h_{01}(t)&\lesssim \Lambda_N(1+t)^{-1/2},\\
			h_{10}(t)&\lesssim \Lambda_N\sqrt N\,(1+t)^{-1/2},&
			h_{00}(t)&\lesssim \Lambda_N(1+t)^{-1}F_N(t)^{1/4}.
		\end{aligned}
	\end{equation}
	For $t\ge0$, set $T_N(t):=e^{-tK_N}P_N, D_1:=\mathrm{Id}, D_0:=J.$
	The kernel $H_t^{ab}$ is the integral kernel of $D_aT_N(t)D_b^*$.
	
	The first-row representation in Lemma~\ref{lem:first-row-kernels} gives
	\begin{equation}\label{eq:epd-route-time-integral}
		\|\mathbb T_N\|_{\infty\to\infty}
		\lesssim\int_0^\infty
		[h_{11}(t)h_{00}(t)+h_{10}(t)h_{01}(t)]\,\dd t.
	\end{equation}
	Let $S(t)=\mathrm e^{-tK}$ on $Y$. Extend $S(t)$ to $L^2(I)$ by setting
	\[
	S(t)f:=\mathrm e^{-tK}\left(f-\frac12\int_I f(y)\,\dd y\right),\qquad t\ge0.
	\]
	The function in parentheses belongs to $Y$, so this agrees with the original semigroup on $Y$ and removes the constant part of any other input. In particular, $S(0)$ is the mean-zero projection on $L^2(I)$.
	Let $\mathcal K_t^{ab}$ be the integral kernel of $D_aP_NS(t)D_b^*$. For $a,b\in\{0,1\}$ and $\nu\in\{0,1\}$ with $N-\nu\ge2$, define the Duhamel error kernels by
	\begin{equation}\label{eq:epd-error-kernel}
		E_{t,\nu}^{ab}(x,y):=-\rho_{N,\nu}\int_0^t
		(z^a(x))^{\top}\mathrm e^{-(t-s)M}e_{N-\nu}\,
		[D_bS(s)\ell_{N+1-\nu}](y)\,\dd s.
	\end{equation}
	Here $\rho_{N,\nu}$ is the coupling coefficient specified in \eqref{eq:epd-ranktwo}, and $e_{N-\nu}$ uses the coordinate indexing defined above.
	The kernel comparison is
	\begin{equation}\label{eq:epd-kernel-split}
		H_t^{ab}(x,y)=\mathcal K_t^{ab}(x,y)
		+\sum_{\substack{\nu\in\{0,1\}\\N-\nu\ge2}}
		E_{t,\nu}^{ab}(x,y).
	\end{equation}
	Lemma~\ref{prop:epd-mainterms} bounds the main operators $\mathcal K_t^{ab}$. To estimate $E_{t,\nu}^{ab}$, Lemma~\ref{lem:epd-exterior} controls the omitted-mode factor, and Lemma~\ref{prop:epd-Asources} controls the discrete source amplitude. Section~4.7 combines these two estimates by time convolution to bound $E_{t,\nu}^{ab}$. 
	\subsection{The Rank-Two Semigroup Comparison}	
	Using \eqref{eq:Mdiag}--\eqref{eq:Moff}, for \(f\in Y\), we have
	\begin{equation}\label{eq:epd-ranktwo}
		\begin{aligned}
			P_NKQf
			&=-\sum_{\substack{\nu\in\{0,1\}\\N-\nu\ge2}}
			\rho_{N,\nu}(f,\ell_{N+1-\nu})_{L^2(I)}\ell_{N-1-\nu},\\
			&\qquad 0<\rho_{N,\nu}\lesssim N^{-2}.
		\end{aligned}
	\end{equation}
	Writing $S(t)=\mathrm e^{-tK}$ and $T_N(t)=\mathrm e^{-tK_N}P_N$, and using $K_N=P_NKP_N|_{Y_N}$, variation of constants gives
	\begin{equation}\label{eq:epd-duhamel}
		T_N(t)
		=P_NS(t)+\int_0^t
		\mathrm e^{-(t-s)K_N}P_NKQ\,S(s)\dd s.
	\end{equation}
	For $a\in\{0,1\}$, $2\le k\le N$, and $\tau\ge0$, the function and matrix representations are related by
	\[
	[D_a\mathrm e^{-\tau K_N}\ell_{k-1}](x)
	=(z^a(x))^{\top}\mathrm e^{-\tau M}e_k.
	\]
	Applying this identity to \eqref{eq:epd-duhamel} and using \eqref{eq:epd-ranktwo} and the self-adjointness of $S(s)$ gives the kernel decomposition \eqref{eq:epd-kernel-split}, with the error kernels in \eqref{eq:epd-error-kernel}.
	
	\subsection{Jacobi Kernel and Projection Estimates}
	Let $P_m^{(\alpha,\beta)}$ denote the classical Jacobi polynomial normalized by $P_m^{(\alpha,\beta)}(1)=\binom{m+\alpha}{m}$. For $\alpha\in\{0,1,2\}$, define the orthonormal Jacobi polynomials by
	\[
	p_m^\alpha(x):=
	\frac{P_m^{(\alpha,\alpha)}(x)}
	{\left(\int_I [P_m^{(\alpha,\alpha)}(y)]^2(1-y^2)^\alpha\,\mathrm dy\right)^{1/2}},
	\qquad m\ge0.
	\]
	Then \(\{p_j^\alpha(x)\}\) is orthonormal with respect to the weight \((1-x^2)^\alpha\):
	\[
	\int_I p_m^\alpha(x)p_j^\alpha(x)(1-x^2)^\alpha\,\mathrm dx=\delta_{mj}.
	\]
	In particular, $P_m^{(0,0)}=L_m$ and $p_m^0=\ell_m$.
	%
	For $\alpha\in\{0,1\}$, define
	\begin{equation*}
		\mathcal K_n^\alpha(x,y):=\sum_{m=0}^np_m^\alpha(x)p_m^\alpha(y),
		\qquad
		\varrho_n(\theta):=\sin\theta+(n+1)^{-1}.
	\end{equation*}
	
	\begin{lemma}\label{lem:epd-Jacobi}
		For $0\le\theta,\phi\le\pi$ and $\alpha=0,1$,
		\begin{equation}\label{eq:epd-Jacobi-kernel}
			|\mathcal K_n^\alpha(\cos\theta,\cos\phi)|
			\lesssim \frac{[\varrho_n(\theta)\varrho_n(\phi)]^{-\alpha-1/2}}
			{\abs{\theta-\phi}+(n+1)^{-1}}.
		\end{equation}
		Moreover, for $m\ge1$ and $j=1,2$,
		\begin{equation}\label{eq:epd-Leg-derivatives}
			|\ell_m^{(j)}(\cos\theta)|
			\lesssim m^j(\sin\theta+m^{-1})^{-j-1/2}.
		\end{equation}
		For $k\ge2$ and $b=\sin\theta$,
		\begin{equation}\label{eq:epd-leg-diff}
			|L_k(\cos\theta)-L_{k-2}(\cos\theta)|\lesssim k^{-1/2}\sqrt b.
		\end{equation}
		In particular $\norm{\eta_k}_\infty+\norm{\eta_k}_1\lesssim 1/k$.
	\end{lemma}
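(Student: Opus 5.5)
For \eqref{eq:epd-Jacobi-kernel} I would use the Christoffel--Darboux formula together with the classical pointwise bounds for Jacobi polynomials. The orthonormal Jacobi polynomials satisfy the Bernstein--Szeg\H{o} type bound
\[
|p_m^\alpha(\cos\theta)|\lesssim(\sin\theta+(m+1)^{-1})^{-\alpha-1/2},\qquad \alpha\in\{0,1,2\},
\]
which is the $\alpha$-analogue of \eqref{eq:bernstein}. It follows from Szeg\H{o}'s asymptotics in the bulk and Hilb-type estimates near the endpoints.

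The argument for the kernel bound splits into two cases.
\begin{itemize}
\item \emph{Near-diagonal case}, $|\theta-\phi|\le(n+1)^{-1}$. Summing the pointwise bound over $m\le n$ gives $\sum_m(\varrho_m(\theta)\varrho_m(\phi))^{-\alpha-1/2}$. Here $\varrho_m(\theta)\eqsim\varrho_m(\phi)$ because $\theta$ and $\phi$ are within $1/n$ of each other. The sum is $\lesssim n(\varrho_n(\theta)\varrho_n(\phi))^{-\alpha-1/2}$, which is the claimed bound.
\item \emph{Off-diagonal case}, $|\theta-\phi|\ge(n+1)^{-1}$. The Christoffel--Darboux formula writes $\mathcal K_n^\alpha$ as
\[
c_n\,\frac{p_{n+1}^\alpha(x)p_n^\alpha(y)-p_n^\alpha(x)p_{n+1}^\alpha(y)}{x-y},
\]
with $c_n$ bounded. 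The difficulty is that $|x-y|=|\cos\theta-\cos\phi|\eqsim|\theta-\phi|(\theta+\phi)(2\pi-\theta-\phi)$ can be much smaller than $|\theta-\phi|$ near the endpoints. To compensate, I would rewrite the numerator using the difference form $p_{n+1}^\alpha(x)-p_{n+1}^\alpha(y)$, or equivalently use the known bound for Christoffel--Darboux kernels in the weighted form $(1-x^2)^{1/2}(1-y^2)^{1/2}$. The extra factor $\sqrt{\sin\theta\sin\phi}\,\varrho^{-1}$ cancels the vanishing of $|x-y|$ relative to $|\theta-\phi|$.
\end{itemize}
This endpoint bookkeeping is the main obstacle. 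I would first try to cite the standard estimate of this form for Jacobi kernels (Nevai, or Erd\'elyi--Magnus--Nevai) and only verify the normalization.

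For \eqref{eq:epd-Leg-derivatives} I would use $L_m'=\tfrac{m+1}{2}P_{m-1}^{(1,1)}$ and $L_m''\propto m^2P_{m-2}^{(2,2)}$. The orthonormalizing constants of $P^{(\alpha,\alpha)}_{m}$ scale like $m^{-1/2}$, which accounts for the $\sqrt{2m+1}$ in $\ell_m$. The pointwise bound above with $\alpha=1,2$ then gives $m^j(\sin\theta+m^{-1})^{-j-1/2}$.

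For \eqref{eq:epd-leg-diff} there are two regimes.
\begin{itemize}
\item When $\sin\theta\ge k^{-1}$, the identity $L_k-L_{k-2}=-(1-x^2)\tfrac{2k-1}{k(k-1)}L_{k-1}'$ applies. Combining $L_{k-1}'=O(\sqrt k\,b^{-3/2})$ (from \eqref{eq:epd-Leg-derivatives} without the normalization) with the factor $b^2/k$ gives $k^{-1/2}b^{1/2}$.
\item When $b\le k^{-1}$, the crude bound $|L_{k-1}'|\le k^2/2$ gives $b^2 k\le k^{-1/2}b^{1/2}$, since $b^{3/2}\le k^{-3/2}$.
\end{itemize}
Finally, $\|\eta_k\|_\infty\lesssim k^{-1/2}\cdot k^{-1/2}=1/k$ by \eqref{eq:eta}. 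Integrating the bound over $I$ gives $\|\eta_k\|_1\lesssim k^{-1}\int b^{1/2}\dd x\lesssim1/k$.
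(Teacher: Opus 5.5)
You have a genuine gap in the kernel estimate \eqref{eq:epd-Jacobi-kernel}, in the endpoint off-diagonal region $|\theta-\phi|>(n+1)^{-1}$, $\theta+\phi<\pi/2$ (and its mirror image). The rest of your plan follows the paper: the regularized pointwise bound for $p_m^\alpha$, the near-diagonal sum, the direct Christoffel--Darboux bound when $\pi/2\le\theta+\phi\le3\pi/2$, the derivative identity for \eqref{eq:epd-Leg-derivatives}, and the identity $L_k-L_{k-2}=-\frac{2k-1}{k(k-1)}(1-x^2)L_{k-1}'$ for \eqref{eq:epd-leg-diff}. In the near-diagonal case your termwise use of $\varrho_m\ge\varrho_n$ replaces the paper's Cauchy--Schwarz step, which is harmless.

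You correctly flag the endpoint region as the main obstacle, but neither mechanism you suggest works there. In the difference form $p_n^\alpha(y)[p_{n+1}^\alpha(x)-p_{n+1}^\alpha(y)]-p_{n+1}^\alpha(y)[p_n^\alpha(x)-p_n^\alpha(y)]$, each piece is individually too large. The cancellation takes place between the two products, not inside either difference. Take $\alpha=0$, $x=1$, $y=\cos(A/n)$ with $1\ll A\ll n$. Then $p_{n+1}^0(1)\eqsim\sqrt n$, $|p_n^0(y)|\eqsim\sqrt{n/A}$ away from its zeros, and $1-y\eqsim A^2/n^2$. So the first piece divided by $x-y$ has size $n^3A^{-5/2}$, while the bound to be proved is of order $n^2A^{-3/2}$. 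Applying the mean value theorem to the differences is even worse, since $|(p_{n+1}^\alpha)'|$ reaches $n^{\alpha+5/2}$ near $x=1$. Your factor $\sqrt{\sin\theta\sin\phi}\,\varrho^{-1}$ also fails: it is of order one when $\theta\eqsim\phi\gg n^{-1}$. The gain actually required is $|x-y|/|\theta-\phi|\eqsim\theta+\phi$, which is small near the endpoint.

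The missing idea is to subtract the endpoint-matched multiple $c_np_n^\alpha$, where $c_n:=p_{n+1}^\alpha(1)/p_n^\alpha(1)$ (not the Christoffel--Darboux constant). This uses the exact identity
\[
p_{n+1}^\alpha(x)p_n^\alpha(y)-p_n^\alpha(x)p_{n+1}^\alpha(y)
=\bigl(p_{n+1}^\alpha(x)-c_np_n^\alpha(x)\bigr)p_n^\alpha(y)
-p_n^\alpha(x)\bigl(p_{n+1}^\alpha(y)-c_np_n^\alpha(y)\bigr).
\]
The combination $p_{n+1}^\alpha-c_np_n^\alpha$ vanishes at $x=1$. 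The connection formula identifies it as $\gamma_{n,+}^\alpha(1-x)\sqrt{n+1}\,P_n^{(\alpha+1,\alpha)}(x)$ with $|\gamma_{n,+}^\alpha|\lesssim1$. The endpoint Jacobi bound then gives $|p_{n+1}^\alpha(\cos\theta)-c_np_n^\alpha(\cos\theta)|\lesssim(\theta+(n+1)^{-1})\varrho_n(\theta)^{-\alpha-1/2}$ for $0\le\theta\le\pi/2$.

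Since $\theta+\phi\ge|\theta-\phi|>(n+1)^{-1}$, the numerator is $\lesssim(\theta+\phi)[\varrho_n(\theta)\varrho_n(\phi)]^{-\alpha-1/2}$. This exactly offsets $|x-y|\eqsim(\theta+\phi)|\theta-\phi|$. The region $\theta+\phi>3\pi/2$ follows from $\mathcal K_n^\alpha(-x,-y)=\mathcal K_n^\alpha(x,y)$. Deferring precisely this case to a reference you have not pinned down leaves the core of \eqref{eq:epd-Jacobi-kernel} unproved.
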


	\begin{proof}
		We first prove the kernel estimate. 
		The endpoint-regularized Jacobi estimate obtained from the interior Darboux and endpoint Bessel formulas in \cite{DLMF} is
		\begin{equation*}
			|p_m^\alpha(\cos\theta)|
			\lesssim \bigl(\sin\theta+(m+1)^{-1}\bigr)^{-\alpha-1/2}.
		\end{equation*}
		For $0\le m\le n$, this gives
		\begin{subequations}
			\begin{equation}\label{eq:epd-pm-alpha-bound}
				|p_m^\alpha(\cos\theta)|
				\lesssim \varrho_n(\theta)^{-\alpha-1/2},
				\quad |p_n^\alpha(\cos\theta)|+|p_{n+1}^\alpha(\cos\theta)|\lesssim \varrho_n(\theta)^{-\alpha-1/2}
			\end{equation}
			\begin{equation}\label{eq:epd-pm-alpha-sum-bound}
				\sum_{m=0}^n|p_m^\alpha(\cos\theta)|^2
				\lesssim (n+1)\varrho_n(\theta)^{-2\alpha-1}.
			\end{equation}
		\end{subequations}
		If $|\theta-\phi|\le(n+1)^{-1}$, the Cauchy--Schwarz inequality yields
		\begin{align*}
			|\mathcal K_n^\alpha(\cos\theta,\cos\phi)|
			&\le
			\left(\sum_{m=0}^n|p_m^\alpha(\cos\theta)|^2\right)^{1/2}
			\left(\sum_{m=0}^n|p_m^\alpha(\cos\phi)|^2\right)^{1/2}\\
			&\lesssim (n+1)
			[\varrho_n(\theta)\varrho_n(\phi)]^{-\alpha-1/2}\\
			&\lesssim
			\frac{[\varrho_n(\theta)\varrho_n(\phi)]^{-\alpha-1/2}}
			{|\theta-\phi|+(n+1)^{-1}}.
		\end{align*}
		We next consider $|\theta-\phi|>(n+1)^{-1}$. Let $x=\cos\theta$, $y=\cos\phi$. The Christoffel--Darboux formula \cite{DLMF} gives
		\begin{equation}\label{eq:Christoffel--Darboux-formula}
			\mathcal K_n^\alpha(x,y)=\frac{\gamma_n^\alpha}{x-y}
			\left(
			p_{n+1}^\alpha(x)p_n^\alpha(y)
			-p_n^\alpha(x)p_{n+1}^\alpha(y)
			\right),
			\qquad |\gamma_n^\alpha|\lesssim 1.
		\end{equation}


		If $\theta$ and $\phi$ satisfy $\frac{\pi}{2}\le\theta+\phi\le\frac{3\pi}{2},$ we have
		\begin{equation}\label{eq:cos-bound}
			|\theta-\phi|\lesssim 2\sin\frac{\theta+\phi}{2}
			\sin\frac{|\theta-\phi|}{2}
			=|\cos\theta-\cos\phi|.
		\end{equation}
		Substituting $x=\cos\theta$, $y=\cos\phi$ in \eqref{eq:Christoffel--Darboux-formula} and using \eqref{eq:epd-pm-alpha-bound} and \eqref{eq:cos-bound} gives
		\begin{align*}
			|\mathcal K_n^\alpha(\cos\theta,\cos\phi)|
			&\lesssim \frac{[\varrho_n(\theta)\varrho_n(\phi)]^{-\alpha-1/2}}
			{|\theta-\phi|}\\
			&\lesssim
			\frac{[\varrho_n(\theta)\varrho_n(\phi)]^{-\alpha-1/2}}
			{|\theta-\phi|+(n+1)^{-1}}.
		\end{align*}
		
		It remains to prove \eqref{eq:epd-Jacobi-kernel} in the two off-diagonal endpoint regions
		\begin{equation*}
			|\theta-\phi|>(n+1)^{-1},\qquad
			\theta+\phi<\frac{\pi}{2}
			\quad\text{or}\quad
			\theta+\phi>\frac{3\pi}{2}.
		\end{equation*}
		Define $c_n:=\frac{p_{n+1}^\alpha(1)}{p_n^\alpha(1)}.$
		The Jacobi connection formulas in \cite{DLMF} give
		\begin{align*}
			p_{n+1}^\alpha(x)-c_np_n^\alpha(x)
			&=\gamma_{n,+}^\alpha(1-x)\sqrt{n+1}\,
			P_n^{(\alpha+1,\alpha)}(x),\\
			p_{n+1}^\alpha(x)+c_np_n^\alpha(x)
			&=\gamma_{n,-}^\alpha(1+x)\sqrt{n+1}\,
			P_n^{(\alpha,\alpha+1)}(x),
		\end{align*}
		where $\gamma_{n,+}$ and $\gamma_{n,-}$ satisfy $|\gamma_{n,+}^\alpha| + |\gamma_{n,-}^\alpha|\lesssim 1.$
		Applying the endpoint Jacobi estimates to these two identities gives
		\begin{align*}
			|p_{n+1}^\alpha(\cos\theta)-c_np_n^\alpha(\cos\theta)|
			&\lesssim
			\bigl(\theta+(n+1)^{-1}\bigr)
			\varrho_n(\theta)^{-\alpha-1/2},
			\,0\le\theta\le\frac{\pi}{2},\\
			|p_{n+1}^\alpha(\cos\theta)+c_np_n^\alpha(\cos\theta)|
			&\lesssim
			\bigl(\pi-\theta+(n+1)^{-1}\bigr)
			\varrho_n(\theta)^{-\alpha-1/2},
			\,\frac{\pi}{2}\le\theta\le\pi.
		\end{align*}		
		If $\theta+\phi<\pi/2$, decompose the Christoffel--Darboux numerator as
		\begin{align*}
			&p_{n+1}^\alpha(x)p_n^\alpha(y)
			-p_n^\alpha(x)p_{n+1}^\alpha(y)\\
			&=
			\bigl(p_{n+1}^\alpha(x)-c_np_n^\alpha(x)\bigr)p_n^\alpha(y)
			-p_n^\alpha(x)
			\bigl(p_{n+1}^\alpha(y)-c_np_n^\alpha(y)\bigr).
		\end{align*}
		The preceding endpoint estimates imply
		\begin{align*}
			&\left|
			p_{n+1}^\alpha(x)p_n^\alpha(y)
			-p_n^\alpha(x)p_{n+1}^\alpha(y)
			\right|\\
			&\lesssim
			\bigl(\theta+\phi+2(n+1)^{-1}\bigr)
			[\varrho_n(\theta)\varrho_n(\phi)]^{-\alpha-1/2}\\
			&\lesssim
			(\theta+\phi)
			[\varrho_n(\theta)\varrho_n(\phi)]^{-\alpha-1/2}.
		\end{align*}
		In this region, $(\theta+\phi)|\theta-\phi|\lesssim|\cos\theta-\cos\phi|.$
		Consequently,
		\begin{equation*}
			|\mathcal K_n^\alpha(\cos\theta,\cos\phi)|
			\lesssim
			\frac{[\varrho_n(\theta)\varrho_n(\phi)]^{-\alpha-1/2}}
			{|\theta-\phi|+(n+1)^{-1}}.
		\end{equation*}
		
		The case $\theta+\phi>3\pi/2$ follows by replacing $(\theta,\phi)$ with $(\pi-\theta,\pi-\phi)$, using $\mathcal K_n^\alpha(-x,-y)=\mathcal K_n^\alpha(x,y)$.
		This proves \eqref{eq:epd-Jacobi-kernel}.
		
		We next prove \eqref{eq:epd-Leg-derivatives}. 
		The Jacobi derivative identity \cite{DLMF} gives
		\begin{equation*}
			\frac{d^j}{dx^j}P_m^{(0,0)}(x)=2^{-j}\frac{(m+j)!}{m!}P_{m-j}^{(j,j)}(x),
			\qquad j=1,2, \quad m\ge j.
		\end{equation*}
		After normalization, this identity becomes
		\begin{equation*}
			\ell_m^{(j)}(x)=a_{m,j}p_{m-j}^j(x),
			\qquad |a_{m,j}|\lesssim m^j.
		\end{equation*}
		The endpoint-regularized Jacobi estimate with parameter $j$ yields
		\begin{align*}
			|\ell_m^{(j)}(\cos\theta)|
			&\lesssim
			m^j\bigl(\sin\theta+(m-j+1)^{-1}\bigr)^{-j-1/2}\\
			&\lesssim
			m^j(\sin\theta+m^{-1})^{-j-1/2}.
		\end{align*}
		If $m<j$, then $\ell_m^{(j)}=0$, so the estimate also holds. This proves \eqref{eq:epd-Leg-derivatives}.
		
		
		Finally, let $x=\cos\theta$ and $b=\sin\theta$. The Legendre identity
		\begin{equation*}
			L_k(x)-L_{k-2}(x)
			=
			-\frac{2k-1}{k(k-1)}(1-x^2)L_{k-1}'(x)
		\end{equation*}
		and the case $j=1$ of \eqref{eq:epd-Leg-derivatives} give
		\begin{equation*}
			|L_{k-1}'(\cos\theta)|
			=
			\sqrt{\frac{2}{2k-1}}\,
			|\ell_{k-1}'(\cos\theta)|
			\lesssim
			k^{1/2}(b+k^{-1})^{-3/2}.
		\end{equation*}
		Using $(2k-1)/(k(k-1))\lesssim 1/k$, we obtain
		\begin{align*}
			|L_k(\cos\theta)-L_{k-2}(\cos\theta)|
			&\lesssim
			k^{-1/2}b^2(b+k^{-1})^{-3/2}\\
			&\lesssim
			k^{-1/2}\sqrt b.
		\end{align*}
		The last step uses $b^2\le \sqrt b\,(b+k^{-1})^{3/2}$. The definition of $\eta_k$ then gives
		\begin{equation*}
			|\eta_k(\cos\theta)|
			\lesssim \frac{1}{k}\sqrt{\sin\theta}
			\lesssim \frac{1}{k},
		\end{equation*}
		and hence we obtain $\|\eta_k\|_\infty+\|\eta_k\|_1 \lesssim \frac{1}{k}$. This completes the proof.
	\end{proof}

	For the fixed degree $N$, let $\Pi_N$ denote the one-dimensional energy projection onto $X_N$, defined by
	\[
	\Pi_N f=-\sum_{k=2}^N\eta_k\int_I\eta_k''f.
	\]
	For $f\in H^1_0(I)$, integration by parts and \eqref{eq:eta-der} give
	\[
	\Pi_N f=JP_Nf',\qquad (\Pi_Nf)'=P_Nf'.
	\]
	On $L^2(I,(1-x^2)^{-1}\,\mathrm dx)$, $\Pi_N$ is also the orthogonal projection onto $X_N$, with the kernel representation
	\begin{equation}\label{eq:epd-E-kernel}
		\Pi_N f(x)=(1-x^2)\int_I K_{N-2}^{1}(x,y)f(y)\,\mathrm dy.
	\end{equation}
	The same formula defines its action on $L^\infty(I)$. We also have $JP_N=\Pi_NJ$ on mean-zero inputs.
	
	\begin{lemma}\label{lem:epd-Lebesgue}
		Let $w(x)=(1-x^2)^{-1/4}$. Then
		\begin{equation}\label{eq:epd-weighted-Lebesgue}
			\sup_x\int_I |k_N(x,y)|w(y)\dd y\lesssim \sqrt N \Lambda_N,
			\qquad \norm{\Pi_N}_{\infty\to\infty}\lesssim \Lambda_N.
		\end{equation}
		In particular $\norm{P_N}_{\infty\to\infty}\lesssim \sqrt N \Lambda_N$.
	\end{lemma}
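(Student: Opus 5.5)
We prove both bounds by inserting the Jacobi kernel estimate of Lemma~\ref{lem:epd-Jacobi} and integrating in the angular variable. The weighted estimate is the core; the other two statements follow quickly from it.

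\textbf{Step 1: the weighted Lebesgue bound.} Note that $k_N=\mathcal K_{N-1}^0-\ell_0\otimes\ell_0$, and the constant term is harmless because $\int_I w\,\mathrm dy<\infty$. Substitute $x=\cos\theta$, $y=\cos\phi$. Then $\mathrm dy=\sin\phi\,\mathrm d\phi$ and $w(y)=(\sin\phi)^{-1/2}$. With $n=N-1$ and $\alpha=0$, \eqref{eq:epd-Jacobi-kernel} gives
\[
\int_I|k_N(x,y)|w(y)\,\mathrm dy\lesssim 1+\varrho_n(\theta)^{-1/2}\int_0^\pi
\frac{\varrho_n(\phi)^{-1/2}(\sin\phi)^{1/2}}{|\theta-\phi|+N^{-1}}\,\mathrm d\phi .
\]
Since $(\sin\phi)^{1/2}\le\varrho_n(\phi)^{1/2}$, the $\phi$-integral is bounded by $\int_0^\pi(|\theta-\phi|+N^{-1})^{-1}\mathrm d\phi\lesssim\Lambda_N$. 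Also $\varrho_n(\theta)^{-1/2}\le\sqrt N$. Together these give the bound $\sqrt N\Lambda_N$.

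The weight is what makes this bound attainable. It exactly cancels the factor $\varrho_n(\phi)^{-1/2}$ in the $y$ variable, so the endpoint concentration only costs $\sqrt N$ in $x$. The unweighted bound $\norm{P_N}_{\infty\to\infty}\lesssim\sqrt N\Lambda_N$ follows directly because $w\ge1$.

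\textbf{Step 2: the bound for $\Pi_N$.} Use the representation \eqref{eq:epd-E-kernel} together with \eqref{eq:epd-Jacobi-kernel} for $\alpha=1$, $n=N-2$. We must bound
\[
\sin^2\theta\int_0^\pi
\frac{[\varrho_n(\theta)\varrho_n(\phi)]^{-3/2}\sin\phi}{|\theta-\phi|+N^{-1}}\,\mathrm d\phi .
\]
The first factor satisfies $\sin^2\theta\,\varrho_n(\theta)^{-3/2}\le\varrho_n(\theta)^{1/2}$. For the integrand, $\sin\phi\,\varrho_n(\phi)^{-3/2}\le\varrho_n(\phi)^{-1/2}$.

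So the integral reduces to
\[
\varrho_n(\theta)^{1/2}\int_0^\pi\frac{\varrho_n(\phi)^{-1/2}}{|\theta-\phi|+N^{-1}}\,\mathrm d\phi .
\]
Split the $\phi$ range into two regions.

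\emph{Region where $\varrho_n(\phi)\gtrsim\varrho_n(\theta)$.} Here the ratio $\varrho_n(\theta)^{1/2}\varrho_n(\phi)^{-1/2}$ is bounded, and the remaining integral contributes $\Lambda_N$.

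\emph{Region where $\varrho_n(\phi)\ll\varrho_n(\theta)$.} Then $\phi$ is near an endpoint while $\theta$ is not, so $|\theta-\phi|\gtrsim\varrho_n(\theta)$. The contribution is at most
\[
\varrho_n(\theta)^{-1/2}\int_0^{C\varrho_n(\theta)}(\phi+N^{-1})^{-1/2}\,\mathrm d\phi\lesssim1 .
\]
Near $\phi\approx\pi$ the argument is symmetric. Adding the two regions gives $\norm{\Pi_N}_{\infty\to\infty}\lesssim\Lambda_N$.

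\textbf{Main obstacle.} The delicate point is the case analysis in Step 2. The $(1-x^2)$ prefactor has to absorb the $\varrho^{-3/2}$ singularity in $\theta$, and the endpoint region in $\phi$ has to be handled through the separation $|\theta-\phi|\gtrsim\sin\theta$. If that splitting fails to be uniform, a fallback is available: write $\Pi_Nf=J P_N f'$ for smooth $f$, which is not usable on $L^\infty$ directly. Instead, I would verify the cases $\theta$ near $0$ and $\theta$ bounded away from the endpoints separately, using $\varrho_n(\theta)\eqsim\theta+N^{-1}$ near $0$.
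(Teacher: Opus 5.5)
Your proof is correct and follows the paper's argument. Step 1 is identical, and Step 2 uses the same kernel representation and the same reduction to $\varrho_n(\theta)^{1/2}\int_0^\pi\varrho_n(\phi)^{-1/2}(|\theta-\phi|+N^{-1})^{-1}\,\mathrm d\phi$; the only difference is that you split according to whether $\varrho_n(\phi)\ge c\,\varrho_n(\theta)$, rather than into the paper's near-diagonal set $E_1$ and the two far sets $E_2,E_3$, and this split is uniform because $\varrho_n(\phi)\le c\,\varrho_n(\theta)$ with $c<1$ gives $|\theta-\phi|\ge\sin\theta-\sin\phi\ge(1-c)\varrho_n(\theta)$, so the fallback in your last paragraph is unnecessary.
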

	\begin{proof}
		We first prove the weighted kernel estimate for $k_N$ and then the $L^\infty$-bound for $\Pi_N$. Recall that
		\begin{equation*}
			k_N(x,y)=\mathcal K_{N-1}^0(x,y)-\ell_0(x)\ell_0(y),
			\qquad \ell_0=2^{-1/2}.
		\end{equation*}
		Let $x=\cos\theta$ and $y=\cos\phi$. The change of variables $y=\cos\phi$ gives
		\begin{equation*}
			w(\cos\phi)|\dd y|
			=(\sin\phi)^{-1/2}\sin\phi\dd\phi
			=\sqrt{\sin\phi}\dd\phi.
		\end{equation*}
		Applying \eqref{eq:epd-Jacobi-kernel} with $\alpha=0$ and $n=N-1$, we obtain
		\begin{align*}
			\int_I|\mathcal K_{N-1}^0(x,y)|w(y)\dd y
			&\lesssim \frac{1}{\sqrt{\sin\theta+N^{-1}}}
			\int_0^\pi
			\frac{\sqrt{\sin\phi}}
			{\sqrt{\sin\phi+N^{-1}}}
			\frac{\dd\phi}{|\theta-\phi|+N^{-1}}\\
			&\lesssim \sqrt N\int_0^\pi
			\frac{\dd\phi}{|\theta-\phi|+N^{-1}}\\
			&\lesssim\sqrt N\left\{
			\log(1+N\theta)+\log\bigl(1+N(\pi-\theta)\bigr)
			\right\}\\
			&\lesssim \sqrt N\,\Lambda_N.
		\end{align*}
		The constant Legendre mode satisfies
		\begin{equation*}
			\sup_{x\in I}\int_I|\ell_0(x)\ell_0(y)|w(y)\dd y
			=\frac12\int_I(1-y^2)^{-1/4}\dd y
			\lesssim 1.
		\end{equation*}
		Since $w\ge1$ and $\Lambda_N\ge1$, the preceding estimates give
		\[
		\|P_N\|_{\infty\to\infty}
		\le\sup_{x\in I}\int_I|k_N(x,y)|w(y)\,\dd y
		\lesssim\sqrt N\,\Lambda_N.
		\]
		
		We next estimate $\Pi_N$. For $0\le\theta\le\pi$, define
		\begin{equation*}
			a_\theta:=\sin\theta+N^{-1}.
		\end{equation*}
		For $f\in L^\infty(I)$, \eqref{eq:epd-E-kernel} and \eqref{eq:epd-Jacobi-kernel} with $\alpha=1$ give
		\begin{align*}
			|\Pi_N f(\cos\theta)|
			&\lesssim
			\|f\|_\infty
			\frac{\sin^2\theta}{a_\theta^{3/2}}
			\int_0^\pi
			\frac{\sin\phi\dd\phi}
			{(\sin\phi+N^{-1})^{3/2}(|\theta-\phi|+N^{-1})}\\
			&\lesssim \|f\|_\infty I(\theta),
		\end{align*}
		where $I(\theta):=\sqrt{a_\theta}\int_0^\pi\frac{\sin\phi\dd\phi} {(\sin\phi+N^{-1})^{3/2} (|\theta-\phi|+N^{-1})}.$
		Since $I(\pi-\theta)=I(\theta)$, it is enough to consider $0\le\theta\le\pi/2$. 
		Define
		\begin{align*}
			E_1(\theta)&:=\left\{\phi\in[0,\pi]:
			|\phi-\theta|\le\frac{a_\theta}{2}\right\},\\
			E_2(\theta)&:=[0,\pi/2]\setminus E_1(\theta),\qquad
			E_3(\theta):=(\pi/2,\pi]\setminus E_1(\theta),\\
			I_j(\theta)&:=\sqrt{a_\theta}\int_{E_j(\theta)}\frac{\sin\phi\dd\phi} 	{(\sin\phi+N^{-1})^{3/2}(|\theta-\phi|+N^{-1})},
			\qquad j=1,2,3.
		\end{align*}
		Then we have $I(\theta)=I_1(\theta)+I_2(\theta)+I_3(\theta).$
		
		For $\phi\in E_1(\theta)$, the Lipschitz bound $|\sin\phi-\sin\theta|\le|\phi-\theta|$ yields
		\begin{equation*}
			\frac{a_\theta}{2}
			\le\sin\phi+N^{-1}
			\le\frac{3a_\theta}{2}.
		\end{equation*}
		Consequently,
		\begin{align*}
			I_1(\theta)
			&\lesssim \int_{E_1(\theta)}
			\frac{\dd\phi}{|\theta-\phi|+N^{-1}}\\
			&\lesssim \int_0^{a_\theta/2}\frac{\dd u}{u+N^{-1}}
			\lesssim\log\left(1+\frac{Na_\theta}{2}\right)
			\lesssim \Lambda_N.
		\end{align*}	
		On $[0,\pi/2]$, we have $a_\theta\eqsim\theta+N^{-1}, \sin\phi+N^{-1} \eqsim\phi+N^{-1}.$
		Splitting $E_2(\theta)$ at $\phi=\theta$, using $|\phi-\theta|>a_\theta/2$, and setting $u=\phi-\theta$ in the second integral, we obtain
		\begin{align*}
			I_2(\theta)
			&\lesssim \frac{\sqrt{a_\theta}}{a_\theta}
			\int_0^\theta(\phi+N^{-1})^{-1/2}\dd\phi
			+
			\sqrt{a_\theta}
			\int_{a_\theta/2}^{\infty}u^{-3/2}\dd u\\
			&\lesssim \frac{\sqrt{a_\theta}\sqrt{\theta+N^{-1}}}{a_\theta}
			+1
			\lesssim 1.
		\end{align*}
		
		It remains to estimate $I_3(\theta)$. If $0\le\theta\le\pi/4$, then we have
		\begin{equation*}
			|\phi-\theta|\ge\frac{\pi}{4},
			\qquad \phi\in(\pi/2,\pi].
		\end{equation*}
		If $\pi/4<\theta\le\pi/2$, then $a_\theta\ge\sin(\pi/4)$, and the definition of $E_3(\theta)$ gives
		\begin{equation*}
			|\phi-\theta|>\frac{a_\theta}{2}\ge\frac{\sin(\pi/4)}{2}.
		\end{equation*}
		Hence $|\phi-\theta|+N^{-1}\ge c$ on $E_3(\theta)$ in both cases. Using $\sin\phi\le\sin\phi+N^{-1}$ and then setting $u=\pi-\phi$, we obtain
		\begin{align*}
			I_3(\theta)
			&\lesssim
			\sqrt{a_\theta}
			\int_{\pi/2}^{\pi}
			(\sin\phi+N^{-1})^{-1/2}\dd\phi\\
			&\lesssim
			\int_0^{\pi/2}(u+N^{-1})^{-1/2}\dd u
			\lesssim 1.
		\end{align*}
		The bounds for $I_1$, $I_2$, $I_3$ give $I(\theta)\lesssim\Lambda_N$,  proving the estimate for $\Pi_N$. 
	\end{proof}

	\subsection{Fractional Estimates and Infinite-Dimensional Semigroups}
	
	Let $\mathbb C_4=\mathbb R/(4\mathbb Z)$ denote the circle of length $4$,
	equipped with Lebesgue measure over one period. For a function $f$ defined on $I$, let $f_{\rm e}$ denote its even reflection at both endpoints, regarded as
	a function on $\mathbb C_4$. Define the distance on this circle by $d_4(x,y):=\min_{r\in\mathbb Z}|x-y+4r|.$ 
	Let $B^{1/2}_{1,1}=B^{1/2}_{1,1}(\mathbb C_4)$ denote the usual
	inhomogeneous Besov space. Define its norm here by the equivalent
	fractional Sobolev expression
	\[
	\norm f_{B^{1/2}_{1,1}}:=\norm f_{L^1(\mathbb C_4)}
	+\int_{\mathbb C_4}\int_{\mathbb C_4}
	\frac{|f(x)-f(y)|}{d_4(x,y)^{3/2}}\dd x\dd y.
	\]
	
	\begin{lemma}\label{lem:epd-fractional}
		For every polynomial $p$ of degree at most $N$,
		\begin{equation}\label{eq:epd-fractional}
			\norm{p_{\rm e}}_{B^{1/2}_{1,1}}
			\lesssim \sqrt N\int_I |p(x)|(1-x^2)^{-1/4}\dd x.
		\end{equation}
	\end{lemma}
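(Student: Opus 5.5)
The estimate is an interpolation between $L^1$ and a weighted $W^{1,1}$ bound. The cut-off scale is adapted to the Chebyshev geometry, $r(x)\eqsim N^{-1}\sqrt{\delta(x)}$, where
\[
\delta(x):=1-|x|+N^{-2}.
\]
The $L^1(\mathbb C_4)$ part of the norm is immediate: it equals $2\norm{p}_{L^1(I)}$, and $(1-x^2)^{-1/4}\ge1$. Write $A:=\int_I|p|(1-x^2)^{-1/4}\dd x$. It remains to bound the double integral. By the symmetry of $p_{\rm e}$ under the two reflections, it suffices to treat $x$ in one fundamental copy of $I$ and $y$ within distance $\le1$ of it. The reflected $y$ then lie in at most two neighbouring copies of $I$. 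Extend $\delta$ and $r$ to $\mathbb C_4$ by the same reflection.

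\textbf{Step 1: split the pairs.} Split the pairs into $d_4(x,y)\ge r(x)$ and $d_4(x,y)<r(x)$, with $r(x):=c_0N^{-1}\sqrt{\delta(x)}$ and $c_0$ small. Note $r(x)\le c_0\delta(x)$ because $\sqrt{\delta}\ge N^{-1}$. Hence, for $c_0$ small, $\delta(y)\eqsim\delta(x)$ and $r(y)\eqsim r(x)$ whenever $d_4(x,y)\lesssim r(x)$.

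\textbf{Step 2: far pairs.} On the far pairs, bound $|p_{\rm e}(x)-p_{\rm e}(y)|\le|p_{\rm e}(x)|+|p_{\rm e}(y)|$. The $|p_{\rm e}(x)|$ term gives
\[
\int |p_{\rm e}(x)|\int_{d_4\ge r(x)}d_4^{-3/2}\dd y\,\dd x\lesssim\int|p|\,r^{-1/2}\lesssim \sqrt N\int_I|p|\,\delta^{-1/4}\lesssim\sqrt N A,
\]
using $\delta(x)\gtrsim 1-x^2$. The $|p_{\rm e}(y)|$ term is handled the same way. After exchanging the order of integration it becomes $\int|p_{\rm e}(y)|\,\sigma(y)\dd y$, with $\sigma(y):=\int_{\{x:\,d_4(x,y)\ge r(x)\}}d_4(x,y)^{-3/2}\dd x$. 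One needs $\sigma(y)\lesssim r(y)^{-1/2}$, which follows from the comparability of $r$ at nearby points together with the slow growth $|r(x)-r(y)|\lesssim N^{-1}|x-y|^{1/2}$.

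\textbf{Step 3: near pairs.} For the near pairs, use
\[
|p_{\rm e}(x)-p_{\rm e}(y)|\le\int_{[x,y]}|p_{\rm e}'|.
\]
This remains valid across an endpoint, since $p_{\rm e}$ is continuous there and only its derivative jumps. Fubini then gives the bound
\[
\int|p_{\rm e}'(s)|\int_{|x-s|\lesssim r(s)}\int_{|y-x|\ge|x-s|}|x-y|^{-1/2}\dd y\,\dd x\,\dd s\lesssim\int_I|p'|\,r^{1/2}\,\dd s .
\]
Since $r^{1/2}\eqsim N^{-1/2}\delta^{1/4}=N^{-1/2}\delta^{1/2}\delta^{-1/4}$ and $\delta^{1/2}\eqsim\sqrt{1-s^2}+N^{-1}$, the problem reduces to the weighted Bernstein inequality
\[
\int_I|p'(s)|\Bigl(\sqrt{1-s^2}+N^{-1}\Bigr)\delta(s)^{-1/4}\dd s\lesssim N\int_I|p|\,\delta^{-1/4}\dd s\lesssim N A .
\]
Combining it with the factor $N^{-1/2}$ gives the claimed $\sqrt N A$.

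\textbf{Main obstacle.} The main obstacle is this weighted $L^1$ Bernstein inequality. It is the Mastroianni--Totik (Ditzian--Totik) Bernstein inequality for doubling weights, applied with the weight $\delta^{-1/4}$. That weight is doubling, with constants uniform in $N$, in the metric adapted to Chebyshev arcs. I would first try to cite it. Failing that, I would prove it directly by substituting $x=\cos\theta$, which makes $p(\cos\theta)$ a trigonometric polynomial of degree $N$. Then $\sqrt{1-x^2}\,|p'(x)|\dd x$ becomes $|\tfrac{\dd}{\dd\theta}p(\cos\theta)|\sin\theta\dd\theta$, and $\delta^{-1/4}\sin\theta$ becomes a doubling weight on the circle. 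The Bernstein inequality for trigonometric polynomials in $L^1$ with a doubling weight of this explicit power type then applies. It can be proved by convolving with a de la Vallée Poussin kernel and using its rapid off-diagonal decay to absorb the weight ratio $(1+N|\theta-\phi|)^{C}$.
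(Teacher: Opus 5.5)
Your proof is correct, and its Besov part is the paper's argument. The paper uses the cut-off $a(x)=cN^{-1}\sqrt{1-x^2+N^{-2}}\eqsim r(x)$ and chooses $c$ so that $a$ is $1/4$-Lipschitz on $\mathbb C_4$, which is your comparability of $r$ at nearby points. It bounds far pairs by $|p_{\rm e}(x)|+|p_{\rm e}(y)|$ and near pairs by the fundamental theorem of calculus along the shorter arc. It then reduces to the same regularized Bernstein inequality $\int_I|p'|(1-x^2+N^{-2})^{1/4}\,dx\lesssim N\int_I|p|(1-x^2)^{-1/4}\,dx$. In your Step~3 display the kernel must be $|x-y|^{-3/2}$: only then does the $y$-integral give $|x-s|^{-1/2}$ and the $x$-integral $r(s)^{1/2}$.

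The genuine difference is the Bernstein step. You cite the Mastroianni--Totik doubling-weight inequality, which is legitimate here: $\delta^{-1/4}$ is doubling with constant independent of $N$. The paper instead proves the needed case by hand.

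Your fallback sketch of a direct proof would not work as written. Put $q(\theta)=p(\cos\theta)$. The weight $w(\theta)=\sin\theta\,\delta(\cos\theta)^{-1/4}\eqsim\sin\theta\,(\sin\theta+N^{-1})^{-1/2}$ vanishes at $\theta=0,\pi$. So the bound $\int|\Phi_N'(\theta-\phi)|\,w(\theta)\,d\theta\lesssim N\,w(\phi)$, which the convolution argument needs, fails for $\phi$ at or near an endpoint.

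The paper avoids this by working with the non-vanishing weight $\varrho(\theta)^{1/2}=(|\sin\theta|+N^{-1})^{1/2}$. This is exactly what your left-hand weight $(\sqrt{1-s^2}+N^{-1})\delta^{-1/4}\eqsim\delta^{1/4}$ becomes in the angle variable. It does satisfy $\varrho(\theta)^{1/2}\lesssim\varrho(\phi)^{1/2}(1+N|\theta-\phi|)^{1/2}$, so convolution with a smooth Fourier cut-off kernel $\Phi_N$ (reproducing degree-$N$ trigonometric polynomials) gives $\int|q'|\varrho^{1/2}\lesssim N\int|q|\varrho^{1/2}$.

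The paper then removes the regularization in a separate step. Using $q=\Phi_N*q$ again, the part of $\int|q|\varrho^{1/2}$ coming from the $\epsilon/N$-neighbourhoods of $0$ and $\pi$ is at most $C\epsilon$ times the whole. For small $\epsilon$ it is absorbed, and off those neighbourhoods $\varrho^{1/2}\lesssim|\sin\theta|^{1/2}$. This Remez-type absorption is the ingredient missing from your sketch; citing the general doubling-weight theorem is what lets you skip it.
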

	
	\begin{proof}
		We first prove the regularized weighted Bernstein estimate
		\begin{equation}\label{eq:epd-reg-Bernstein}
			\int_I |p'(x)|(1-x^2+N^{-2})^{1/4}\dd x
			\lesssim N\int_I |p(x)|(1-x^2)^{-1/4}\dd x.
		\end{equation}
		This estimate will be used below to control the short-distance part of the
		Besov seminorm.	Let
		$
		\mathbb C_{2\pi}:=\mathbb R/(2\pi\mathbb Z),
		d_{2\pi}(\theta,\phi):=
		\min_{r\in\mathbb Z}|\theta-\phi+2\pi r|.
		$
		On \(\mathbb C_{2\pi}\), use the even \(2\pi\)-periodic extension
		\(\varrho_{N-1}(\theta)=|\sin\theta|+N^{-1}\) of the weight defined above.
		Define
		$
		q(\theta):=p(\cos\theta).
		$
		Then \(q\) is a trigonometric polynomial of degree at most \(N\). For
		\(f,g\in L^1(\mathbb C_{2\pi})\), define
		\[
		(f*g)(\theta):=\frac1{2\pi}\int_{\mathbb C_{2\pi}}
		f(\theta-\phi)g(\phi)\dd\phi.
		\]
		Choose a fixed even function
		\(\chi_{\rm ang}\in C_c^\infty((-2,2))\) satisfying
		\(\chi_{\rm ang}=1\) on \([-1,1]\), and define
		\[
		\Phi_N(\theta):=
		\sum_{k\in\mathbb Z}\chi_{\rm ang}(k/N)\mathrm e^{ik\theta}.
		\]
		The Fourier support of \(\Phi_N\) is contained in
		\(\{k\in\mathbb Z:|k|<2N\}\), while
		\(\chi_{\rm ang}(k/N)=1\) for \(|k|\le N\). Hence
		\[
		q=\Phi_N*q,\qquad q'=\Phi_N'*q.
		\]
		For every fixed integer \(A\ge4\), repeated summation by parts gives
		\[
		|\Phi_N(\theta)|
		\le C_AN\bigl(1+Nd_{2\pi}(\theta,0)\bigr)^{-A},
		\]
		\[
		|\Phi_N'(\theta)|
		\le C_AN^2\bigl(1+Nd_{2\pi}(\theta,0)\bigr)^{-A}.
		\]
		The Lipschitz estimate
		\(\bigl||\sin\theta|-|\sin\phi|\bigr|\le d_{2\pi}(\theta,\phi)\)
		implies
		\[
		\frac{\varrho_{N-1}(\theta)^{1/2}}{\varrho_{N-1}(\phi)^{1/2}}
		\lesssim \bigl(1+Nd_{2\pi}(\theta,\phi)\bigr)^{1/2}.
		\]
		Using \(q'=\Phi_N'*q\), and Fubini's theorem, we obtain
		\begin{align*}
			\int_{\mathbb C_{2\pi}}|q'(\theta)|\varrho_{N-1}(\theta)^{1/2}\dd\theta
			&\le C_AN^2\int_{\mathbb C_{2\pi}}|q(\phi)|\varrho_{N-1}(\phi)^{1/2}\\
			&\quad\times
			\int_{\mathbb C_{2\pi}}
			\bigl(1+Nd_{2\pi}(\theta,\phi)\bigr)^{-A+1/2}
			\dd\theta\dd\phi.
		\end{align*}
		The change of variables $u=N d_{2\pi}(\theta,\phi)$ yields
		\[
		\sup_{\phi\in\mathbb C_{2\pi}}
		\int_{\mathbb C_{2\pi}}
		\bigl(1+Nd_{2\pi}(\theta,\phi)\bigr)^{-A+1/2}\dd\theta
		\le \frac{C_A}{N}.
		\]
		Consequently,
		\begin{equation}\label{eq:epd-angular-Bernstein}
			\int_{\mathbb C_{2\pi}}|q'(\theta)|\varrho_{N-1}(\theta)^{1/2}\dd\theta
			\lesssim N\int_{\mathbb C_{2\pi}}|q(\theta)|\varrho_{N-1}(\theta)^{1/2}\dd\theta.
		\end{equation}
		
		It remains to remove \(N^{-1}\) from the weight on the right-hand side.
		For \(0<\delta<1\), define
		\[
		A_\delta:=
		\left\{\theta\in\mathbb C_{2\pi}:
		\min\{d_{2\pi}(\theta,0),d_{2\pi}(\theta,\pi)\}
		<\frac{\delta}{N}\right\}.
		\]
		The identity \(q=\Phi_N*q\), the kernel estimate for \(\Phi_N\), and the
		weight-ratio estimate give
		\[
		\sup_{\phi\in\mathbb C_{2\pi}}
		\int_{A_\delta}
		|\Phi_N(\theta-\phi)|
		\frac{\varrho_{N-1}(\theta)^{1/2}}{\varrho_{N-1}(\phi)^{1/2}}\dd\theta
		\lesssim \delta.
		\]
		Integrating the reproducing identity over \(A_\delta\) therefore yields
		\[
		\int_{A_\delta}|q(\theta)|\varrho_{N-1}(\theta)^{1/2}\dd\theta
		\lesssim \delta
		\int_{\mathbb C_{2\pi}}|q(\theta)|\varrho_{N-1}(\theta)^{1/2}\dd\theta.
		\]
		Fix \(\delta>0\) sufficiently small that the \(A_\delta\) contribution is at most one half of the full weighted norm. On
		\(\mathbb C_{2\pi}\setminus A_\delta\), one has
		\[
		|\sin\theta|\ge c_\delta N^{-1},
		\qquad
		\varrho_{N-1}(\theta)^{1/2}\lesssim |\sin\theta|^{1/2}.
		\]
		It follows that
		\begin{align*}
			\int_{\mathbb C_{2\pi}}|q(\theta)|\varrho_{N-1}(\theta)^{1/2}\dd\theta
			&=
			\int_{A_\delta}|q(\theta)|\varrho_{N-1}(\theta)^{1/2}\dd\theta\\
			&\quad+\int_{\mathbb C_{2\pi}\setminus A_\delta}|q(\theta)|\varrho_{N-1}(\theta)^{1/2}\dd\theta\\
			&\le \frac12\int_{\mathbb C_{2\pi}}|q(\theta)|\varrho_{N-1}(\theta)^{1/2}\dd\theta\\
			&\quad+C_\delta\int_{\mathbb C_{2\pi}}
			|q(\theta)||\sin\theta|^{1/2}\dd\theta.
		\end{align*}
		Absorbing the first term on the right gives
		\begin{equation}\label{eq:epd-remove-regularization}
			\int_{\mathbb C_{2\pi}}|q(\theta)|\varrho_{N-1}(\theta)^{1/2}\dd\theta
			\lesssim \int_{\mathbb C_{2\pi}}
			|q(\theta)||\sin\theta|^{1/2}\dd\theta.
		\end{equation}
		Using \(q'(\theta)=-p'(\cos\theta)\sin\theta\) and \(x=\cos\theta\), we have
		\[
		\int_{\mathbb C_{2\pi}}|q'(\theta)|\varrho_{N-1}(\theta)^{1/2}\dd\theta
		=
		2\int_I|p'(x)|
		\bigl(\sqrt{1-x^2}+N^{-1}\bigr)^{1/2}\dd x,
		\]
		\[
		\int_{\mathbb C_{2\pi}}|q(\theta)||\sin\theta|^{1/2}\dd\theta
		=
		2\int_I|p(x)|(1-x^2)^{-1/4}\dd x.
		\]
		Moreover,
		\[
		(1-x^2+N^{-2})^{1/4}
		\le \bigl(\sqrt{1-x^2}+N^{-1}\bigr)^{1/2}
		\le 2^{1/4}(1-x^2+N^{-2})^{1/4}.
		\]
		Substituting \eqref{eq:epd-remove-regularization} into
		\eqref{eq:epd-angular-Bernstein} and using these identities proves
		\eqref{eq:epd-reg-Bernstein}.
		
		We now estimate the fractional seminorm in $B^{1/2}_{1,1}(\mathbb C_4)$. Define
		\[
		a(x):=\frac{c}{N}\sqrt{1-x^2+N^{-2}},\qquad x\in I,
		\]
		and extend \(a\) to \(\mathbb C_4=\mathbb R/(4\mathbb Z)\) by the same even reflections defining \(p_{\rm e}\). Since
		\[
		|a'(x)|
		=\frac{c|x|}{N\sqrt{1-x^2+N^{-2}}}
		\le c,
		\]
		we may fix \(c>0\) sufficiently small so that
		\[
		|a(x)-a(y)|\le\frac14d_4(x,y),
		\qquad x,y\in\mathbb C_4.
		\]
		Define the long-distance and short-distance contributions by
		\[
		\mathcal I_{\rm far}:=
		\int_{\mathbb C_4}
		\int_{\{y:\,d_4(x,y)\ge a(x)\}}
		\frac{|p_{\rm e}(x)-p_{\rm e}(y)|}
		{d_4(x,y)^{3/2}}\dd y\dd x,
		\]
		\[
		\mathcal I_{\rm near}:=
		\int_{\mathbb C_4}
		\int_{\{y:\,d_4(x,y)<a(x)\}}
		\frac{|p_{\rm e}(x)-p_{\rm e}(y)|}
		{d_4(x,y)^{3/2}}\dd y\dd x.
		\]
		
		To estimate $\mathcal I_{\rm far}$, we use $|p_{\rm e}(x)-p_{\rm e}(y)|\le |p_{\rm e}(x)|+|p_{\rm e}(y)|$ together with the estimate
		\[
		\int_{\{y:\,d_4(x,y)\ge a(x)\}}
		d_4(x,y)^{-3/2}\dd y
		\lesssim a(x)^{-1/2}.
		\]
		If \(d_4(x,y)\ge a(x)\), the Lipschitz estimate for \(a\) gives
		\[
		a(y)\le a(x)+\frac14d_4(x,y)
		\le\frac54d_4(x,y),
		\]
		and hence \(d_4(x,y)\ge\frac45a(y)\). After interchanging \(x\) and \(y\),
		the second term satisfies the same bound. Therefore
		\begin{align*}
			\mathcal I_{\rm far}
			&\lesssim \int_{\mathbb C_4}|p_{\rm e}(x)|a(x)^{-1/2}\dd x\\
			&\lesssim \sqrt N\int_I
			|p(x)|(1-x^2+N^{-2})^{-1/4}\dd x\\
			&\lesssim \sqrt N\int_I
			|p(x)|(1-x^2)^{-1/4}\dd x.
		\end{align*}
		
		For \(\mathcal I_{\rm near}\), let \(\gamma_{x,y}\) be the shorter arc in
		\(\mathbb C_4\) joining \(x\) and \(y\). The fundamental theorem of
		calculus gives
		\[
		|p_{\rm e}(x)-p_{\rm e}(y)|
		\le\int_{\gamma_{x,y}}|p_{\rm e}'(z)|\dd z.
		\]
		If \(z\in\gamma_{x,y}\) and \(d_4(x,y)<a(x)\), then
		\[
		|a(z)-a(x)|
		\le\frac14d_4(x,z)
		<\frac14a(x),
		\]
		so that $\frac34a(x)\le a(z)\le\frac54a(x).$
		After inserting the fundamental theorem of calculus and interchanging
		the \(x,y,z\) integrations, the pairs \((x,y)\) whose shorter arc contains
		\(z\) can be parametrized by two nonnegative distances \(u\) and \(v\)
		satisfying \(u+v\le\frac43a(z)\). Hence
		\begin{align*}
			\mathcal I_{\rm near}
			&\lesssim \int_{\mathbb C_4}|p_{\rm e}'(z)|
			\int_{\substack{u,v\ge0\\u+v\le\frac43a(z)}}
			(u+v)^{-3/2}\dd u\dd v\dd z\\
			&\lesssim\int_{\mathbb C_4}|p_{\rm e}'(z)|
			\int_0^{\frac43a(z)}s^{-3/2}
			\left(\int_0^s\dd u\right)\dd s\dd z\\
			&\lesssim \int_{\mathbb C_4}|p_{\rm e}'(z)|a(z)^{1/2}\dd z\\
			&\lesssim \frac{1}{\sqrt N}
			\int_I|p'(x)|(1-x^2+N^{-2})^{1/4}\dd x.
		\end{align*}
		Applying \eqref{eq:epd-reg-Bernstein} to the last integral gives
		\[
		\mathcal I_{\rm near}
		\lesssim \sqrt N\int_I|p(x)|(1-x^2)^{-1/4}\dd x.
		\]
		
		Finally, the even reflection satisfies
		\[
		\|p_{\rm e}\|_{L^1(\mathbb C_4)}
		=2\int_I|p(x)|\dd x
		\le 2\int_I|p(x)|(1-x^2)^{-1/4}\dd x.
		\]
		Combining these three bounds proves \eqref{eq:epd-fractional}.
	\end{proof}

	Define the semigroup generated by the negative inverse of the Dirichlet Laplacian on \(L^2(I)\) by
	\[
	S_D(t):=\mathrm e^{-tJJ^*},\qquad t\ge0.
	\]
	Under the period-\(4\) even Neumann and odd Dirichlet extensions to
	\(\mathbb C_4\), both \(S(t)\) and \(S_D(t)\) have the Fourier multiplier
	\[
	\mathrm e^{-t/\xi^2},
	\qquad
	\xi=\frac{j\pi}{2},
	\qquad
	j\in\mathbb Z\setminus\{0\}.
	\]
	
	The frequency \(j=0\) is absent for \(S(t)\) and \(S_D(t)\). 
	Expanding the operator exponentials into power series gives
	\[
	JS(t)=S_D(t)J\quad\text{on }Y,
	\qquad
	S(t)J^*=J^*S_D(t)\quad\text{on }L^2(I).
	\]
	For a finite signed or complex Borel measure \(\mu\) on \(\mathbb C_4\), let \(|\mu|\) denote its total variation measure and define the norm $\norm{\mu}_{\mathrm{meas}}:=|\mu|(\mathbb C_4).$
	\begin{lemma}\label{lem:epd-Fourier}
		For $t\ge0$ and $p=1,\infty$,
		\begin{align}
			\norm{S(t)}_{p\to p}+\norm{S_D(t)}_{p\to p}&\lesssim 1,\label{eq:epd-S0}\\
			\norm{JS(t)}_{p\to p}+\norm{S(t)J^*}_{p\to p}
			&\lesssim (1+t)^{-1/2},\label{eq:epd-S1}\\
			\norm{JS(t)J^*}_{p\to p}&\lesssim (1+t)^{-1}.\label{eq:epd-S2}
		\end{align}
		For $t\ge1$ and $f$ with $f_{\rm e}\in B^{1/2}_{1,1}$,
		\begin{equation}\label{eq:epd-Besov-highpass}
			\norm{S(t)f}_1\lesssim t^{-1/4}\norm{f_{\rm e}}_{B^{1/2}_{1,1}}.
		\end{equation}
		If the even reflection has a distributional second derivative which is
		a finite measure, then
		\begin{equation}\label{eq:epd-second-highpass}
			\norm{S(t)f}_1\lesssim t^{-1}\norm{(f_{\rm e})''}_{\mathrm{meas}}.
		\end{equation}
	\end{lemma}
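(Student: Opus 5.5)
The plan is to transfer everything to the circle $\mathbb C_4$, where all four operators become Fourier multipliers, and then read off the $L^1$ norm of each convolution kernel by a scaling argument. On $Y$, the even (Neumann) reflection turns $S(t)$ into the multiplier $m_0(\xi)=\mathrm e^{-t/\xi^2}$ on $\xi=j\pi/2$, $j\ne0$. Integration $J$ maps even mean-zero functions to odd ones, which is the multiplier $(i\xi)^{-1}$ up to sign. Likewise $S_D(t)$ on odd reflections has multiplier $m_0$.

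With these conventions, $JS(t)$ has multiplier $m_1(\xi)=(i\xi)^{-1}\mathrm e^{-t/\xi^2}$, and $JS(t)J^*$ has multiplier $m_2(\xi)=\xi^{-2}\mathrm e^{-t/\xi^2}$, up to signs. Reflection changes $L^p$ norms only by constants, so each estimate reduces to bounding the total variation of the periodic kernel $\sum_{j\ne0}m(j\pi/2)\mathrm e^{ij\pi x/2}$. That bound gives the $p=1$ and $p=\infty$ cases simultaneously, since convolution by a finite measure is bounded on both spaces. The bounds for $S(t)J^*$ then follow by duality from $JS(t)$, using $\norm{A^*}_{1\to1}=\norm A_{\infty\to\infty}$.

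\emph{Uniform and decay bounds \eqref{eq:epd-S0}--\eqref{eq:epd-S2}.} The key device is scaling $m_a(\xi)=t^{-a/2}G_a(\xi/\sqrt t)$, with
\[
G_0(\eta)=\mathrm e^{-1/\eta^2},\qquad G_1(\eta)=(i\eta)^{-1}\mathrm e^{-1/\eta^2},\qquad G_2(\eta)=\eta^{-2}\mathrm e^{-1/\eta^2}.
\]
Each $G_a$ is smooth on $\R$, and all derivatives vanish at $\eta=0$, so the $j=0$ frequency is consistently omitted. By Poisson summation, the periodic kernel is the periodization of $\sqrt t\,\check G_a(\sqrt t\,\cdot)$. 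Its $L^1(\mathbb C_4)$ norm is therefore at most $t^{-a/2}\norm{\check G_a}_{L^1(\R)}$, or $\norm{\check G_a}_{\mathrm{meas}}$ when $a=0$.

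\begin{itemize}
\item For $a=0$, write $G_0=1-(1-G_0)$. The $1$ gives a Dirac mass, and $1-G_0$ is smooth with $O(\eta^{-2})$ decay and derivatives decaying faster. Hence its inverse transform is $O(\min(1,|x|^{-2}))$, which is integrable, and this yields \eqref{eq:epd-S0}.
\item For $a=1$, the tail $\eta^{-1}$ produces only a jump discontinuity at $x=0$, so $\check G_1$ is bounded. Smoothness gives rapid decay as $|x|\to\infty$, so $\check G_1\in L^1$.
\item For $a=2$, the function $G_2$ is integrable and smooth, so the same argument applies.
\end{itemize}

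This handles $t\ge1$. For $t\le1$, I will instead factor the kernel as the fixed $L^1$ sawtooth kernel of $\xi^{-1}$ (respectively $\xi^{-2}$) composed with $S(t)$, which is bounded by \eqref{eq:epd-S0}. The main obstacle I expect here is checking carefully that $\check G_1$ is genuinely integrable near the origin, with no logarithmic divergence. I would settle this by splitting $G_1=\eta^{-1}\chi(\eta)+\text{(Schwartz)}$ with $\chi$ a smooth cutoff equal to one near infinity. The transform of $\eta^{-1}\chi$ is $c\,\operatorname{sgn}x$ plus a Schwartz function.

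\emph{High-pass bounds.} For \eqref{eq:epd-Besov-highpass}, I use the Littlewood--Paley characterization of $B^{1/2}_{1,1}(\mathbb C_4)$, which is equivalent to the fractional norm defined above: $\norm{f_{\rm e}}_{B}\eqsim\sum_k2^{k/2}\norm{\Delta_kf_{\rm e}}_1$. On the $k$-th dyadic block, the multiplier $\mathrm e^{-t/\xi^2}$ has, after rescaling by $2^k$, an $L^1$ kernel bounded by $C\mathrm e^{-ct4^{-k}}$. Therefore
\[
\norm{S(t)f}_1\lesssim\sum_k \mathrm e^{-ct4^{-k}}\,2^{-k/2}\,\bigl(2^{k/2}\norm{\Delta_kf_{\rm e}}_1\bigr).
\]
Since $\sup_k \mathrm e^{-ct4^{-k}}2^{-k/2}\lesssim t^{-1/4}$, this gives \eqref{eq:epd-Besov-highpass}. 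The low-frequency blocks are harmless because there $4^{-k}t\gtrsim t$.

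For \eqref{eq:epd-second-highpass}, note that for $j\ne0$ the Fourier coefficient of $f_{\rm e}$ equals $-\xi^{-2}$ times that of $(f_{\rm e})''$. Hence $S(t)f=-k_2*(f_{\rm e})''$, where $k_2$ is the periodic kernel of $m_2$. Young's inequality for the convolution of an $L^1$ kernel with a finite measure, together with $\norm{k_2}_1\lesssim t^{-1}$ from the $a=2$ case for $t\ge1$, finishes the proof.
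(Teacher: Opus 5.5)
Your proposal is correct and follows essentially the same route as the paper. Both arguments use Fourier multipliers on $\mathbb C_4$, the rescaling $\eta=\xi/\sqrt t$ with periodization for large $t$, factorization through the fixed kernels of $J$ and $J^*$ for $t\le1$, dyadic Littlewood--Paley blocks for the $B^{1/2}_{1,1}$ bound, and the $\xi^{-2}\mathrm e^{-t/\xi^2}$ kernel convolved with $(f_{\rm e})''$. The only real difference is how you show $\check G_a\in L^1(\R)$. The paper notes that $1-G_0$, $G_1$, $G_2$ lie in $H^1(\R)$ and applies Cauchy--Schwarz with Plancherel, which avoids your jump analysis. In that analysis, the two remainders you call Schwartz are really an $O(|\eta|^{-3})$ symbol and a smooth function cancelling $c\,\operatorname{sgn}x$ at infinity, but integrability of $\check G_1$ is unaffected.
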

	\begin{proof}
		We first represent the operators as Fourier multipliers on
		\(\mathbb C_4=\mathbb R/(4\mathbb Z)\). For \(g\in L^1(\mathbb C_4)\), set
		\[
		\xi_k:=\frac{k\pi}{2},\qquad
		\widehat g(k):=\frac14\int_0^4g(x)\mathrm e^{-i\xi_kx}\,\dd x,
		\qquad k\in\mathbb Z.
		\]
		For a multiplier \(m\) defined on \(\{\xi_k:k\in\mathbb Z\}\), let \(\mathcal M_m\) denote the periodic Fourier multiplier
		\[
		\mathcal M_mg(x):=\sum_{k\in\mathbb Z}
		m(\xi_k)\widehat g(k)\mathrm e^{i\xi_kx}.
		\]
		If the periodic convolution kernel \(\mathcal K_m\) of \(\mathcal M_m\) belongs to
		\(L^1(\mathbb C_4)\), Young's inequality gives
		\[
		\|\mathcal M_m\|_{L^p(\mathbb C_4)\to L^p(\mathbb C_4)}
		\lesssim \|\mathcal K_m\|_{L^1(\mathbb C_4)},
		\qquad p\in\{1,\infty\}.
		\]
		Let \(R=\sqrt t\). Under the \(4\)-periodic even extension,
		the multiplier of \(S(t)\) is $0$ for $\xi=0$ and $\mathrm e^{-R^2/\xi^2}$ for $ \xi\ne0$.
		Under the odd Dirichlet extension to $\mathbb C_4$, $S_D(t)$ has the same Fourier multiplier on nonzero frequencies. The multipliers of \(JS(t)\),
		\(S(t)J^*\) and \(JS(t)J^*\) on nonzero frequencies are
		\[
		(i\xi)^{-1}\mathrm e^{-R^2/\xi^2},\qquad
		(-i\xi)^{-1}\mathrm e^{-R^2/\xi^2},\qquad
		\xi^{-2}\mathrm e^{-R^2/\xi^2}.
		\]
		For $\eta \ne 0$, define the three real-line symbols by
		\[
		\sigma_0(\eta):=1-\mathrm e^{-1/\eta^2},\qquad
		\sigma_1(\eta):=(i\eta)^{-1}\mathrm e^{-1/\eta^2},\qquad
		\sigma_2(\eta):=\eta^{-2}\mathrm e^{-1/\eta^2},
		\]
		and \(\sigma_0(0):=1\), \(\sigma_1(0)=\sigma_2(0):=0\). Each
		\(\sigma_\nu\) belongs to \(H^1(\mathbb R)\). 
		For every \(\sigma\in H^1(\mathbb R)\), Cauchy--Schwarz and
		Plancherel give
		\[
		\begin{aligned}
			\|\mathcal F^{-1}\sigma\|_{L^1(\mathbb R)}
			\le
			\left(\int_{\mathbb R}\frac{\dd x}{1+x^2}\right)^{1/2}
			\left\|(1+x^2)^{1/2}\mathcal F^{-1}\sigma\right\|_2  
			\lesssim \bigl(\|\sigma\|_2+\|\sigma'\|_2\bigr).
		\end{aligned}
		\]
		Hence \(\mathcal F^{-1}\sigma_\nu\in L^1(\mathbb R)\) for
		\(\nu=0,1,2\).
		
		The change of variables $\eta=\xi/R$ gives kernel bounds $C$, $CR^{-1}$, and $CR^{-2}$. These bounds are preserved under periodization.
		The identity $m_{0,R}(\xi)=1-\sigma_0(\xi/R), \xi\in\{\xi_k:k\in\mathbb Z\},$  includes \(\xi=0\), because \(m_{0,R}(0)=0\) and \(\sigma_0(0)=1\).
		Consequently, \(S(t)\) on the even extension and \(S_D(t)\) on the odd
		extension are the identity minus a periodic convolution operator whose
		kernel has uniformly bounded \(L^1\) norm. Young's inequality yields
		\[
		\|S(t)\|_{p\to p}+\|S_D(t)\|_{p\to p}\lesssim 1,
		\qquad p\in\{1,\infty\},
		\]
		which proves \eqref{eq:epd-S0} for \(t>0\). At \(t=0\),
		\[
		S(0)f=f-\frac12\int_I f(y)\,\dd y,\qquad S_D(0)f=f.
		\]
		The estimate follows since \(\|S(0)f\|_p\le2\|f\|_p\) for \(p=1,\infty\).
		
		For \(t\ge1\), the other two periodic kernel bounds give
		\[
		\begin{aligned}
			\|JS(t)\|_{p\to p}+\|S(t)J^*\|_{p\to p}
			&\lesssim R^{-1}=t^{-1/2},\\
			\|JS(t)J^*\|_{p\to p}
			&\lesssim R^{-2}=t^{-1}.
		\end{aligned}
		\]
		For \(0\le t\le1\), the boundedness of \(J\) and \(J^*\), together with
		\eqref{eq:epd-S0}, gives
		\[
		\|JS(t)\|_{p\to p}+\|S(t)J^*\|_{p\to p}\lesssim 1,
		\qquad
		\|JS(t)J^*\|_{p\to p}\lesssim 1.
		\]
		Combining the bounds for $0\le t\le1$ and $t\ge1$ proves \eqref{eq:epd-S1}--\eqref{eq:epd-S2}.
		
		We next prove \eqref{eq:epd-second-highpass}. Let
		\(\mu:=(f_{\rm e})''\), interpreted as a finite measure on
		\(\mathbb C_4\). For \(k\ne0\),
		\[
		\widehat\mu(k)=-\xi_k^2\widehat{f_{\rm e}}(k),
		\]
		and the zero Fourier mode is removed by \(S(t)\). Hence
		\[
		\widehat{(S(t)f)_{\rm e}}(k)
		=-\xi_k^{-2}\mathrm e^{-R^2/\xi_k^2}\widehat\mu(k),
		\qquad k\ne0.
		\]
		The periodic kernel associated with
		\(\xi^{-2}\mathrm e^{-R^2/\xi^2}\) has \(L^1\) norm at most
		\(CR^{-2}\). Convolution of this kernel with \(\mu\) therefore gives
		\[
		\|S(t)f\|_{L^1(I)}
		\lesssim R^{-2}\|\mu\|_{\mathrm{meas}}
		=t^{-1}\|(f_{\rm e})''\|_{\mathrm{meas}},
		\]
		which proves \eqref{eq:epd-second-highpass}.
		
		It remains to prove \eqref{eq:epd-Besov-highpass}. Assume \(t\ge1\), so
		\(R\ge1\). Let \(\chi_{\rm dy}\in C_c^\infty((-2,2))\) be even and equal
		to \(1\) on \([-1,1]\), and define
		\[
		\psi_0(\xi):=\chi_{\rm dy}(\xi),\qquad
		\psi_j(\xi):=\chi_{\rm dy}(2^{-j}\xi)
		-\chi_{\rm dy}(2^{1-j}\xi),\quad j\ge1.
		\]
		The dyadic Fourier blocks are
		\[
		\Delta_jg(x):=\sum_{k\in\mathbb Z}
		\psi_j(\xi_k)\widehat g(k)\mathrm e^{i\xi_kx},
		\qquad j\ge0.
		\]
		Choose an even function
		\(\widetilde\psi\in C_c^\infty(\{\eta:1/4<|\eta|<4\})\) that equals
		\(1\) on $\operatorname{supp}
		\bigl(\chi_{\rm dy}(\eta)-\chi_{\rm dy}(2\eta)\bigr),$ and let $\widetilde\psi_j(\xi):=\widetilde\psi(2^{-j}\xi), j\ge1.$ 
		Choose also \(\widetilde\psi_0\in C_c^\infty(\mathbb R)\) equal to \(1\)
		on \(\operatorname{supp}\psi_0\). Define
		\[
		m_{R,j}(0):=0,\qquad
		m_{R,j}(\xi):=\widetilde\psi_j(\xi)
		\mathrm e^{-R^2/\xi^2},\quad \xi\ne0,\qquad
		I_j:=\mathcal M_{m_{R,j}}\Delta_jf_{\rm e}.
		\]
		The identity \(\widetilde\psi_j=1\) on
		\(\operatorname{supp}\psi_j\), together with
		\(\sum_{j\ge0}\psi_j(\xi)=1\), gives the explicit decomposition
		\[
		(S(t)f)_{\rm e}=\sum_{j\ge0}I_j.
		\]
		
		For \(j\ge1\), the change of variables \(\xi=2^j\eta\) transforms the
		real-line multiplier \(m_{R,j}\) into
		\[
		\widetilde\psi(\eta)
		\exp\left(-\frac{(R/2^j)^2}{\eta^2}\right).
		\]
		On the support of \(\widetilde\psi\), \(|\eta|\) is bounded above and
		away from zero. For every \(M>0\),
		\[
		\sum_{\ell=0}^2
		\left\|
		\partial_\eta^\ell
		\left[
		\widetilde\psi(\eta)
		\exp\left(-\frac{(R/2^j)^2}{\eta^2}\right)
		\right]
		\right\|_2
		\le
		C_M\min\left\{1,\left(\frac{2^j}{R}\right)^M\right\}.
		\]
		When \(2^j<R\), this follows from
		\[
		u^q\mathrm e^{-cu^2}\le C_{q,M}u^{-M},
		\qquad u=\frac{R}{2^j}\ge1,
		\]
		for every fixed \(q\ge0\). When \(2^j\ge R\), one has
		\(R/2^j\le1\), and all derivatives in the preceding display are
		uniformly bounded. Taking \(M=1/2\), applying the \(H^1\)-to-\(L^1\)
		Fourier estimate proved above, rescaling, and periodizing give
		\[
		\|\mathcal M_{m_{R,j}}\|_{1\to1}
		\lesssim \min\left\{1,\left(\frac{2^j}{R}\right)^{1/2}\right\},
		\qquad j\ge1.
		\]
		For \(j=0\), only finitely many nonzero frequencies
		\(\xi_k\in\operatorname{supp}\widetilde\psi_0\) occur. Hence
		\[
		\|\mathcal M_{m_{R,0}}\|_{1\to1}
		\lesssim e^{-cR^2}
		\lesssim R^{-1/2}.
		\]
		Combining the two cases yields
		\[
		\|I_j\|_{L^1(\mathbb C_4)}
		\lesssim
		\min\left\{1,\left(\frac{2^j}{R}\right)^{1/2}\right\}
		\|\Delta_jf_{\rm e}\|_{L^1(\mathbb C_4)},
		\qquad j\ge0.
		\]
		Summing the terms \(I_j\) over \(j\ge0\) gives
		\[
		\begin{aligned}
			\|S(t)f\|_{L^1(I)}
			&\le \sum_{j\ge0}\|I_j\|_{L^1(\mathbb C_4)}\\
			&\lesssim R^{-1/2}
			\sum_{j\ge0}2^{j/2}
			\|\Delta_jf_{\rm e}\|_{L^1(\mathbb C_4)}\\
			&\lesssim R^{-1/2}
			\|f_{\rm e}\|_{B^{1/2}_{1,1}(\mathbb C_4)}
			=t^{-1/4}
			\|f_{\rm e}\|_{B^{1/2}_{1,1}(\mathbb C_4)}.
		\end{aligned}
		\]
		The dyadic sum is equivalent to the \(B^{1/2}_{1,1}\)-norm, so this summation
		introduces no logarithmic factor. This proves
		\eqref{eq:epd-Besov-highpass}.
	\end{proof}

	\subsection{Semigroup Estimates and Omitted Modes}
	
	\begin{lemma}\label{prop:epd-mainterms}
		For every \(N\ge2\) and \(t\ge0\),
		\begin{align*}
			\norm{P_NS(t)}_{L^\infty(I)\to L^\infty(I)}
			&\lesssim \Lambda_N\sqrt N\left(1+\frac{t}{N^2}\right)^{-1/4},\\
			\norm{JP_NS(t)}_{L^\infty(I)\to L^\infty(I)}
			&\lesssim \Lambda_N(1+t)^{-1/2},\\
			\norm{P_NS(t)J^*}_{L^\infty(I)\to L^\infty(I)}
			&\lesssim \Lambda_N\sqrt N(1+t)^{-1/2},\\
			\norm{JP_NS(t)J^*}_{L^\infty(I)\to L^\infty(I)}
			&\lesssim \Lambda_N(1+t)^{-1}\left(1+\frac{t}{N^2}\right)^{1/4}.
		\end{align*}
	\end{lemma}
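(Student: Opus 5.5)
Each of the four bounds factors into an operator that carries the degree dependence ($P_N$, $\Pi_N$, or the weighted kernel of $k_N$) composed with a semigroup factor controlled by Lemma~\ref{lem:epd-Fourier}. For small times, $t\lesssim N^2$ in the first and fourth estimates and $t\lesssim1$ in the middle two, the claimed bounds follow from crude factorizations:
\begin{itemize}
\item $\norm{P_NS(t)}_{\infty\to\infty}\le\norm{P_N}_{\infty\to\infty}\norm{S(t)}_{\infty\to\infty}\lesssim\sqrt N\Lambda_N$ by Lemma~\ref{lem:epd-Lebesgue} and \eqref{eq:epd-S0}.
\item Since $JP_N=\Pi_NJ$ on mean-zero inputs, $JP_NS(t)=\Pi_N\,JS(t)$, and \eqref{eq:epd-weighted-Lebesgue} with \eqref{eq:epd-S1} gives $\Lambda_N(1+t)^{-1/2}$ for all $t$. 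This settles the second bound entirely.
\item Writing $P_NS(t)J^*=P_N\cdot S(t)J^*$ gives $\sqrt N\Lambda_N(1+t)^{-1/2}$ for all $t$, which settles the third bound.
\item $JP_NS(t)J^*=\Pi_N\,JS(t)J^*$ gives $\Lambda_N(1+t)^{-1}$ for all $t$, which is stronger than the fourth bound.
\end{itemize}
So only the first estimate needs genuine work, and only in the regime $t\ge N^2$. There we must show $\norm{P_NS(t)}_{\infty\to\infty}\lesssim\Lambda_N N\,t^{-1/4}$.

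For this regime we use duality. Since $P_N$ and $S(t)$ are self-adjoint and commute on $Y$ (both are functions of $K$ restricted appropriately, or at least $S(t)$ is self-adjoint with a symmetric kernel), the $L^\infty\to L^\infty$ norm of $P_NS(t)$ equals
\[
\sup_x\norm{S(t)k_N(x,\cdot)}_{L^1}.
\]
We justify this by writing $P_NS(t)f(x)=\int (S(t)k_N(x,\cdot))(y)f(y)\,\dd y$, using self-adjointness of $S(t)$ and $P_N1=0$. For $t\ge1$, \eqref{eq:epd-Besov-highpass} applied to the polynomial $p=k_N(x,\cdot)$ of degree at most $N$ gives
\[
\norm{S(t)k_N(x,\cdot)}_1\lesssim t^{-1/4}\norm{p_{\rm e}}_{B^{1/2}_{1,1}}.
\]
Lemma~\ref{lem:epd-fractional} bounds the Besov norm by $\sqrt N\int_I|k_N(x,y)|(1-y^2)^{-1/4}\dd y$, and Lemma~\ref{lem:epd-Lebesgue} bounds that integral by $\sqrt N\Lambda_N$ uniformly in $x$. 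Together these give $\Lambda_N N t^{-1/4}$, which is $\lesssim\Lambda_N\sqrt N(t/N^2)^{-1/4}$, as required.

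The one delicate point is the duality and commutation step. Here $S(t)$ was extended to $L^2(I)$ by removing the mean, and we must check that the adjoint action on the kernel variable is legitimate, that is, that $S(t)$ applied to $k_N(x,\cdot)$ in $y$ represents $P_NS(t)$. This follows from self-adjointness of $e^{-tK}$ on $Y$ together with the fact that $k_N(x,\cdot)$ has zero mean in $y$. The other ingredient is that the Besov estimate requires no boundary compatibility, because it is stated for even reflections and Lemma~\ref{lem:epd-fractional} handles arbitrary polynomials. Combining the small-$t$ and large-$t$ regimes completes all four estimates.
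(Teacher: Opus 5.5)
Your proof is correct and follows essentially the same route as the paper: the first estimate combines the uniform bound with the Besov high-pass estimate applied to $k_N(x,\cdot)$ via self-adjointness of $S(t)$, and the second and fourth factor through $\Pi_N$ using $JP_N=\Pi_NJ$. For the third you factor directly as $\|P_N\|_{\infty\to\infty}\|S(t)J^*\|_{\infty\to\infty}$, which is simpler than the paper's split $S(t)=S(t/2)S(t/2)$ and gives the same bound. One correction: $P_N$ and $S(t)$ do not commute, because the nonzero cross block $P_NKQ$ in \eqref{eq:crossblock} rules this out; however, your duality step uses only self-adjointness of the mean-removed $S(t)$ on $L^2(I)$, so the argument is unaffected.
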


	\begin{proof}
		Recall that $JP_N=\Pi_N J$.
		We first estimate \(P_NS(t)\). Equations \eqref{eq:epd-S0} and \eqref{eq:epd-weighted-Lebesgue} give
		\[
		\|P_NS(t)\|_{\infty\to\infty}
		\le \|P_N\|_{\infty\to\infty}\|S(t)\|_{\infty\to\infty}
		\lesssim \Lambda_N\sqrt N,\qquad t\ge0.
		\]
		For \(t\ge1\), applying Lemma~\ref{lem:epd-fractional} to the polynomial $k_N(x,y)$ in the variable $y$, together with \eqref{eq:epd-weighted-Lebesgue}, gives
		\[
		\begin{aligned}
			\sup_{x\in I}\|(k_N(x,\cdot))_{\rm e}\|_{B^{1/2}_{1,1}(\mathbb C_4)}
			&\lesssim \sqrt N\sup_{x\in I}
			\int_I |k_N(x,y)|(1-y^2)^{-1/4}\,\dd y\\
			&\lesssim \sqrt N\bigl(\sqrt N \Lambda_N\bigr)
			= N\Lambda_N.
		\end{aligned}
		\]
		Using the self-adjointness of \(S(t)\) and \eqref{eq:epd-Besov-highpass}, we obtain
		\[
		\begin{aligned}
			\|P_NS(t)\|_{\infty\to\infty}
			&=\sup_{x\in I}\|S(t)k_N(x,\cdot)\|_{L^1(I)}\\
			&\lesssim t^{-1/4}\sup_{x\in I}
			\|(k_N(x,\cdot))_{\rm e}\|_{B^{1/2}_{1,1}(\mathbb C_4)}
			\lesssim N\Lambda_Nt^{-1/4}.
		\end{aligned}
		\]
		Combining this estimate with the uniform bound gives, for \(t\ge1\),
		\[
		\begin{aligned}
			\|P_NS(t)\|_{\infty\to\infty}
			&\lesssim \Lambda_N\min\{\sqrt N,Nt^{-1/4}\}\\
			&=\Lambda_N\sqrt N\min\left\{1,\left(\frac{t}{N^2}\right)^{-1/4}\right\}\\
			&\lesssim \Lambda_N\sqrt N\left(1+\frac{t}{N^2}\right)^{-1/4}.
		\end{aligned}
		\]
		For \(0\le t\le1\), one has \(1\le F_N(t)\le2\), so the uniform bound gives the same estimate with a larger implicit constant.
		
		We next use \(JP_N=\Pi_N J\), \eqref{eq:epd-weighted-Lebesgue}, and \eqref{eq:epd-S1}:
		\[
		\begin{aligned}
			\|JP_NS(t)\|_{\infty\to\infty}
			&=\|\Pi_N JS(t)\|_{\infty\to\infty}\\
			&\le \|\Pi_N\|_{\infty\to\infty}\|JS(t)\|_{\infty\to\infty}
			\lesssim \Lambda_N(1+t)^{-1/2}.
		\end{aligned}
		\]
		The factor \(\Lambda_N\) here comes from \(\|\Pi_N\|_{\infty\to\infty}\lesssim \Lambda_N\).
		
		The semigroup identity \(S(t)=S(t/2)S(t/2)\) gives
		\[
		P_NS(t)J^*=P_NS(t/2)S(t/2)J^*.
		\]
		Using the estimate for \(P_NS(t/2)\), \eqref{eq:epd-S1} and $F_N(t/2)\ge1$, we obtain
		\[
		\begin{aligned}
			\|P_NS(t)J^*\|_{\infty\to\infty}
			&\le \|P_NS(t/2)\|_{\infty\to\infty}
			\|S(t/2)J^*\|_{\infty\to\infty}\\
			&\lesssim \Lambda_N\sqrt N\,F_N(t/2)^{-1/4}(1+t/2)^{-1/2}\\
			&\lesssim \Lambda_N\sqrt N\,(1+t)^{-1/2}.
		\end{aligned}
		\]
		
		Finally, \(JP_N=\Pi_N J\), \eqref{eq:epd-weighted-Lebesgue}, and \eqref{eq:epd-S2} give
		\[
		\begin{aligned}
			\|JP_NS(t)J^*\|_{\infty\to\infty}
			&=\|\Pi_N JS(t)J^*\|_{\infty\to\infty}\\
			&\le \|\Pi_N\|_{\infty\to\infty}
			\|JS(t)J^*\|_{\infty\to\infty}\\
			&\lesssim \Lambda_N(1+t)^{-1}\\
			&\lesssim \Lambda_N(1+t)^{-1}F_N(t)^{1/4}.
		\end{aligned}
		\]
		The last inequality uses $F_N(t)\ge1$.
	\end{proof}

	Only the first two omitted Legendre modes, $m=N,N+1$, occur in  \eqref{eq:epd-ranktwo}.

	\begin{lemma}\label{lem:epd-exterior}
		For $m\ge1$ and $t\ge0$,
		\begin{align}
			\norm{S(t)\ell_m}_1&\lesssim (1+t/m^2)^{-3/4},\label{eq:epd-B1}\\
			\norm{JS(t)\ell_m}_1&\lesssim m^{-1}(1+t/m^2)^{-5/4}.\label{eq:epd-B0}
		\end{align}
	\end{lemma}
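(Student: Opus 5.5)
We prove both estimates by combining a crude uniform bound for times $t\lesssim m^2$ with a decay bound for $t\ge m^2$ obtained from Lemma~\ref{lem:epd-Fourier}, specifically \eqref{eq:epd-second-highpass} applied to a suitable smooth function.

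For small times, \eqref{eq:epd-S0} gives $\norm{S(t)\ell_m}_1\lesssim\norm{\ell_m}_1\lesssim1$, since $\ell_m$ is $L^2$-normalized on a bounded interval. For \eqref{eq:epd-B0} we write $JS(t)\ell_m=S_D(t)J\ell_m=-S_D(t)\eta_{m+1}$. Then \eqref{eq:epd-S0} for $S_D$ and Lemma~\ref{lem:epd-Jacobi}, which gives $\norm{\eta_{m+1}}_1\lesssim m^{-1}$, yield $\norm{JS(t)\ell_m}_1\lesssim m^{-1}$. This settles both claims for $t\le m^2$.

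For large times the idea is to move derivatives onto $\ell_m$. Because $K=J^*J$ is injective on $Y$ and $\ell_m\in Y$, we can write $\ell_m=K^{k}g$ with $g$ the $k$-fold primitive expressed through integration operators. Then
\[
S(t)\ell_m=K^kS(t)g,\qquad \norm{K^kS(t)}_{1\to1}\lesssim t^{-k}.
\]
The operator bound follows from \eqref{eq:epd-S2}-type multiplier estimates: the symbol of $K^k S(t)$ is $\xi^{-2k}e^{-t/\xi^2}$, and the periodic kernel $L^1$ bound scales like $R^{-2k}$. Splitting a power of $t$ between the semigroup and the input is more flexible. Write $S(t)\ell_m=S(t/2)S(t/2)\ell_m$ and use the representation $S(s)=\mathrm{Id}-\mathcal M_{\sigma_0(\cdot/\sqrt s)}$ together with a fractional version of the symbol bound: the symbol $\xi^{2\beta}e^{-R^2/\xi^2}$, read in the Besov sense, gives a gain $t^{-\beta}$ acting on $\norm{(\ell_m)_{\rm e}}_{B^{2\beta}_{1,1}}$. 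Exactly as in \eqref{eq:epd-Besov-highpass}, this holds via the dyadic decomposition in the proof of Lemma~\ref{lem:epd-Fourier} with $M=2\beta$.

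The heart of the proof is then a Besov bound for Legendre polynomials of the form
\[
\norm{(\ell_m)_{\rm e}}_{B^{s}_{1,1}}\lesssim m^{s},\qquad 0\le s\le 2.
\]
We will establish it through Lemma~\ref{lem:epd-fractional} and its analogues, using \eqref{eq:epd-Leg-derivatives}. These give $\norm{\ell_m'}_{L^1}\lesssim m\int(\sin\theta+m^{-1})^{-3/2}\sin\theta\,\dd\theta\lesssim m^{3/2}$. For the reflected function, $(\ell_m)_{\rm e}''$ is a measure with jumps $2\ell_m'(\pm1)\sim m^2$ plus $\norm{\ell_m''}_1\lesssim m^{2}\cdot m^{1/2}$. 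The $m^{1/2}$ losses here are where the exponents $3/4$ and $5/4$ come from rather than $1$. So the plan is to take $\beta=3/4$: interpolating between the $L^1$ bound ($s=0$, cost $1$) and the $B^{2}$-type bound (cost $m^{2}$, plus a further $m^{1/2}$ from endpoint concentration) should give $\norm{S(t)\ell_m}_1\lesssim (m^2/t)^{3/4}$ for $t\ge m^2$. We will verify the Besov estimate $\norm{(\ell_m)_{\rm e}}_{B^{3/2}_{1,1}}\lesssim m^{3/2}$ directly. We expect this to be done by splitting at scale $a(x)\sim m^{-1}\sqrt{1-x^2+m^{-2}}$ as in the proof of Lemma~\ref{lem:epd-fractional}, where the near part is controlled by second derivatives via \eqref{eq:epd-Leg-derivatives} and the far part by first derivatives. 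This gives \eqref{eq:epd-B1}.

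For \eqref{eq:epd-B0} we apply the same scheme to $JS(t)\ell_m=-S_D(t)\eta_{m+1}$, this time with $\beta=5/4$ and the odd extension. By \eqref{eq:epd-leg-diff} the function $\eta_{m+1}$ has size $m^{-1}\sqrt{\sin\theta}$ and vanishes at the endpoints, so its odd extension has no jump. Each derivative costs a factor $m$, which gives $\norm{(\eta_{m+1})_{\rm o}}_{B^{5/2}_{1,1}}\lesssim m^{-1}m^{5/2}$ and hence $m^{-1}(m^2/t)^{5/4}$. Equivalently, one can use $S(t)J^*$-type multipliers with the extra $\xi^{-1}$ from $J$.

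The main obstacle is this pair of endpoint-sensitive Besov estimates with the sharp powers $m^{3/2}$ and $m^{3/2}$ (for $\ell_m$ at order $3/2$ and for $\eta_{m+1}$ at order $5/2$). Near the endpoints the weight $(\sin\theta+m^{-1})^{-j-1/2}$ in \eqref{eq:epd-Leg-derivatives} is exactly critical. Our first attempt will be the near/far splitting of Lemma~\ref{lem:epd-fractional} combined with the real-interpolation characterization of $B^{s}_{1,1}$ between $W^{1,1}$ and $W^{2,1}$ (measure-valued) norms of the reflected functions.
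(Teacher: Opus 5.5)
\textbf{There is a genuine gap} in the step you call the heart of the proof. The bound $\norm{(\ell_m)_{\rm e}}_{B^{3/2}_{1,1}}\lesssim m^{3/2}$ is false; the true size is $\eqsim m^{3/2}\log m$. The exponent $3/2$ is exactly critical for the endpoint profile of $\ell_m$. On the angular annulus $\theta\eqsim\theta_0$, with $m^{-1}\le\theta_0\le1$, the function $\ell_m$ has amplitude $\theta_0^{-1/2}$, $x$-width $\theta_0^2$ and local $x$-frequency $m/\theta_0$, so its $L^1$ mass there is $\eqsim\theta_0^{3/2}$. With $2^j=m/\theta_0$ this gives $2^{3j/2}\norm{\Delta_j(\ell_m)_{\rm e}}_1\eqsim m^{3/2}$ for each of the $\eqsim\log m$ bands $m\le2^j\le m^2$. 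Equivalently, second differences satisfy $\norm{\Delta_h^2(\ell_m)_{\rm e}}_1\gtrsim(mh)^{3/2}$ for $m^{-2}\ll h\ll m^{-1}$, and integrating against $h^{-5/2}\dd h$ over this range gives $m^{3/2}\log m$.

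The near/far splitting you propose exhibits the same loss: its near part is $\int_I|\ell_m''|a^{1/2}\dd x\eqsim m^{3/2}\int_{1/m}^{1}\theta^{-1}\dd\theta$. Consequently the route $\norm{S(t)f}_1\lesssim t^{-3/4}\norm{f_{\rm e}}_{B^{3/2}_{1,1}}$ yields \eqref{eq:epd-B1} only with an extra logarithm. The order-$5/2$ estimate for $\eta_{m+1}$ in your treatment of \eqref{eq:epd-B0} is critical in the same way. Your stated range $0\le s\le2$ is also inconsistent:
\begin{itemize}
\item for $3/2<s<2$ the norm is $\eqsim m^{2s-3/2}$;
\item at $s=2$ it is infinite, because of the boundary kink $2\ell_m'(\pm1)$, whose size is $\eqsim m^{5/2}$ rather than $m^2$;
\item interpolating the costs $1$ at $s=0$ and $m^{5/2}$ at $s=2$ gives only $m^{15/8}$ at $s=3/2$.
\end{itemize}

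\textbf{How to repair it.} What the estimate needs is control that is uniform over scales, not summed over them. Concretely, you need the $K$-functional bound $\inf_{\ell_m=f_b+f_i}\bigl(\norm{f_b}_1+t^{-1}\norm{(f_i)_{\rm e}''}_{\mathrm{meas}}\bigr)\lesssim(m^2/t)^{3/4}$, which is a $B^{3/2}_{1,\infty}$-type bound of size $m^{3/2}$. With that norm, your dyadic multiplier argument (taking $M>3/2$) sums without loss.

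The paper obtains this directly with a time-dependent split. For $C_0m<\sqrt t<m^2$ it cuts $\ell_m$ by a smooth angular cutoff at scale $a=m/\sqrt t$. This gives an endpoint piece with $\norm{f_b}_1\lesssim a^{3/2}$ and an interior piece $f_i$ with $\norm{f_i''}_1\lesssim m^2a^{-1/2}$. Since $f_i$ vanishes near $\pm1$, its reflection has no Dirac masses. Then \eqref{eq:epd-S0} and \eqref{eq:epd-second-highpass} give $a^{3/2}+t^{-1}m^2a^{-1/2}\eqsim(m/\sqrt t)^{3/2}$. For $\sqrt t\ge m^2$, the global bound $\norm{(\ell_m)_{\rm e}''}_{\mathrm{meas}}\lesssim m^{5/2}$ suffices.

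For \eqref{eq:epd-B0} no separate Besov estimate for $\eta_{m+1}$ is needed. The factorization $\norm{JS(t)\ell_m}_1\le\norm{JS(t/2)}_{1\to1}\norm{S(t/2)\ell_m}_1$, together with \eqref{eq:epd-S1}, supplies the extra factor $t^{-1/2}=m^{-1}(m/\sqrt t)$. Your small-time arguments coincide with the paper's.
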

	\begin{proof}
		Let $R=\sqrt t$. To prove \eqref{eq:epd-B1}, we first consider $R\le C_0m$, where $C_0>1$ is a fixed sufficiently large constant. Then \eqref{eq:epd-S0} and \(\norm{\ell_m}_1\lesssim 1\) give
		\[
		\norm{S(t)\ell_m}_1
		\le \norm{S(t)}_{1\to1}\norm{\ell_m}_1
		\lesssim (1+t/m^2)^{-3/4}.
		\]	
		We next consider \(C_0m<R<m^2\). Let $a:=\frac{m}{R}$. Then \(0<a<C_0^{-1}\) and \(ma>1\). For $x=\cos\theta$, choose a smooth cutoff function $\chi_a\in C^\infty([0,\pi])$ satisfying
		\[
		0\le\chi_a\le1,\qquad
		\chi_a(\theta)=1\quad\text{for }\theta\in[0,a]\cup[\pi-a,\pi],
		\]
		\[
		\chi_a(\theta)=0\quad\text{for }\theta\in[2a,\pi-2a],
		\qquad
		|\partial_\theta^j\chi_a(\theta)|\lesssim a^{-j},\quad j=1,2.
		\]
		Decompose $\ell_m=f_b+f_i$, where
		\[
		f_b(\cos\theta):=\chi_a(\theta)\ell_m(\cos\theta),\qquad
		f_i(\cos\theta):=(1-\chi_a(\theta))\ell_m(\cos\theta).
		\]
		Although \(f_b\) and \(f_i\) need not have zero mean, the extended definition of \(S(t)\) applies to each. The function $f_b$ is supported near the two endpoints. By \eqref{eq:bernstein} and \(dx=\sin\theta\,\dd\theta\),
		\[
		\begin{aligned}
			\norm{f_b}_1
			&\lesssim \int_0^{2a}
			\min\{m^{1/2},\theta^{-1/2}\}\theta\,\dd\theta\\
			&\lesssim \left(m^{-3/2}+a^{3/2}\right)
			\lesssim a^{3/2},
		\end{aligned}
		\]
		where \(a\ge m^{-1}\) was used in the last inequality.
		
		To estimate \(f_i\), define \(\widetilde\chi_a(\cos\theta)=\chi_a(\theta)\). 
		Using $|\widetilde\chi_a'(x)|\lesssim a^{-2},|\widetilde\chi_a''(x)|\lesssim a^{-4},$ together with \eqref{eq:bernstein}, \eqref{eq:epd-Leg-derivatives}, and \(dx=\sin\theta\,\dd\theta\), we obtain
		\[
		\norm{f_i''}_1
		\lesssim \left(m^2a^{-1/2}+ma^{-3/2}+a^{-5/2}\right).
		\]
		Together with \(ma\ge1\), this implies $\norm{f_i''}_1\lesssim m^2a^{-1/2}.$
		
		Let \(f_i^e\) be the even reflection of \(f_i\) at \(-1\) and \(1\), followed by its period-\(4\) extension to $\mathbb C_4.$ 
		The function \(f_i\) vanishes near both endpoints, so \((f_i^e)''\) has no boundary Dirac masses and
		\[
		\norm{(f_i^e)''}_{\mathrm{meas}}\lesssim \norm{f_i''}_1.
		\]
		Applying \eqref{eq:epd-S0} to \(f_b\) and \eqref{eq:epd-second-highpass} to \(f_i\), and using $a=m/R$, we obtain
		\[
		\begin{aligned}
			\norm{S(t)\ell_m}_1
			&\le \norm{S(t)f_b}_1+\norm{S(t)f_i}_1\\
			&\lesssim \norm{f_b}_1+t^{-1}\norm{(f_i^e)''}_{\mathrm{meas}}\\
			&\lesssim \left(a^{3/2}+R^{-2}m^2a^{-1/2}\right)
			\lesssim\left(\frac{m}{R}\right)^{3/2}.
		\end{aligned}
		\]
		
		Let \(\ell_m^e\) denote the even reflection and period-\(4\) extension of \(\ell_m\). Using \eqref{eq:epd-Leg-derivatives}, its distributional second derivative satisfies
		\[
		\norm{(\ell_m^e)''}_{\mathrm{meas}}
		\lesssim \left(\norm{\ell_m''}_1
		+|\ell_m'(1)|+|\ell_m'(-1)|\right)
		\lesssim m^{5/2}.
		\]
		We next use \eqref{eq:epd-second-highpass} and $R\ge m^2$ to obtain
		\[
		\norm{S(t)\ell_m}_1
		\lesssim t^{-1}\norm{(\ell_m^e)''}_{\mathrm{meas}}
		\lesssim m^{5/2}R^{-2}
		\lesssim \left(\frac{m}{R}\right)^{3/2}.
		\]
		Combining the three ranges proves \eqref{eq:epd-B1}.
		
		To prove \eqref{eq:epd-B0}, first suppose $R\le C_0m$.
		Using $JS(t)=S_D(t)J$ as established above, \eqref{eq:epd-S0}, $J\ell_m=-\eta_{m+1}$, and Lemma~\ref{lem:epd-Jacobi}, we obtain
		\[
		\norm{JS(t)\ell_m}_1
		=\norm{S_D(t)J\ell_m}_1
		\lesssim \norm{J\ell_m}_1
		=\norm{\eta_{m+1}}_1
		\lesssim \frac{1}{m}.
		\]
		Using \(R/m\le C_0\), we obtain \eqref{eq:epd-B0}.
		
		For $R>C_0m$, the semigroup property, \eqref{eq:epd-S1}, and \eqref{eq:epd-B1} give
		\[
		\begin{aligned}
			\norm{JS(t)\ell_m}_1
			&\le \norm{JS(t/2)}_{1\to1}\norm{S(t/2)\ell_m}_1\\
			&\lesssim (1+t/2)^{-1/2}
			\left(1+\frac{t}{2m^2}\right)^{-3/4}\\
			&\lesssim R^{-1}\left(\frac{m}{R}\right)^{3/2}=m^{-1}\left(\frac{R}{m}\right)^{-5/2}.
		\end{aligned}
		\]
		Combining the two ranges and using \(R^2=t\) proves \eqref{eq:epd-B0}, completing the proof. 
	\end{proof}

	\subsection{Heat Estimates on the Parity Chains}
	Split $M$ into the chains of even and odd indices. Fix a nonempty parity chain and let $k_{\mathrm{end}}\in\{N-1,N\}$ denote its last index. Define the node weights $\omega_k$, edge lengths $h_k$, and one-dimensional coordinates $s_k$ along this chain by
	\begin{equation}\label{eq:epd-path-weights}
		\omega_k=2k-1,\qquad h_k=2k+1,\qquad
		s_k=\frac{(k-1/2)^2}{2}.
	\end{equation}
	Here $h_k=s_{k+2}-s_k$, and
	$\omega_k=(h_{k-2}+h_k)/2$. Introduce a ghost vertex with zero Dirichlet value at $k=0$ for the
	even chain or $k=1$ for the odd chain, and at $k_{\mathrm{end}}+2$ on the right.
	The path length is comparable to $N^2$, and its largest edge is at most
	$CN$.

	Define the set of interior vertices of the chosen parity chain as follows
	\[
	\mathcal I=
	\begin{cases}
		\{2,4,\ldots,k_{\mathrm{end}}\}, & k_{\mathrm{end}}\ \text{is even},\\
		\{3,5,\ldots,k_{\mathrm{end}}\}, & k_{\mathrm{end}}\ \text{is odd}.
	\end{cases}
	\]
	For \(1\le p<\infty\), define the space of vectors on the interior vertices in $\mathcal I$ by
	\[
	\begin{aligned}
		\ell^p(\omega):&=\left\{u=(u_k)_{k\in\mathcal I}:
		\|u\|_{\ell^p(\omega)}:=\left(\sum_{k\in\mathcal I}\omega_k|u_k|^p\right)^{1/p}
		<\infty
		\right\},\\
		\ell^\infty(\omega):&=\left\{u=(u_k)_{k\in\mathcal I}:
		\|u\|_{\ell^\infty(\omega)}:=\max_{k\in\mathcal I}|u_k|
		<\infty\right\}.
	\end{aligned}
	\]
	Define the weighted inner product by $\ip{u}{v}_\omega:=\sum_k\omega_k u_k\overline{v_k}.$
	
	Define the transformed operator $\mathcal L$ by
	\begin{equation}\label{eq:epd-pathL}
		(\mathcal L u)_k=\frac1{\omega_k}
		\left(\frac{u_k-u_{k-2}}{h_{k-2}}
		+\frac{u_k-u_{k+2}}{h_k}\right).
	\end{equation}
	On the chosen parity chain, its relation to the mass matrix is
	\[
	(\mathcal Lu)_k=\frac{(Ma)_k}{\sqrt{\omega_k}},\qquad a_k=\sqrt{\omega_k}u_k.
	\]
	It is a positive Dirichlet finite-volume Laplacian, self-adjoint in
	$\ell^2(\omega)$. Let $p_t(k,j)$ denote its heat kernel with respect to
	the measure $\omega$, so that
	\[
	(\mathrm e^{-t\mathcal L}f)_k=\sum_j p_t(k,j)f_j\omega_j.
	\]
	
	\begin{lemma}\label{lem:epd-path-heat}
		For $t>0$, we have the estimates
		\begin{align}
			0\le p_t(k,j)&\lesssim t^{-1/2} ,\label{eq:epd-Nashheat}\\
			q_{k_{\mathrm{end}}}(t):=\sum_k\omega_kp_t(k,k_{\mathrm{end}})
			&\lesssim \min\{1,N/\sqrt t\},\label{eq:epd-survival}\\
			p_{R^2}(k,j)&\lesssim \frac 1 R\exp\left(-\frac{c|s_k-s_j|}{R}\right)
			\quad(R\ge N).\label{eq:epd-Davies}\\
			\norm{\bm1_{k\le N/2}\mathrm e^{-tM}e_{k_{\mathrm{end}}}}_{\ell^2}
			&\lesssim \mathrm e^{-cN},
			\qquad 0\le t\le N^2.\label{eq:epd-short-block}
		\end{align}
	\end{lemma}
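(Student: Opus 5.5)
The plan is to treat $\mathcal L$ as a one-dimensional finite-volume Laplacian on the path $\{s_k\}$. The node measure $\omega_k=(h_{k-2}+h_k)/2$ is the dual cell length, so sums $\sum_k\omega_k g_k$ behave like integrals $\int g\,\dd s$ on an interval of length $\eqsim N^2$. The Dirichlet form
\[
\mathcal E(u)=\sum_k \frac{|u_{k+2}-u_k|^2}{h_k},
\]
with the ghost values set to zero, behaves like $\int|\partial_s u|^2\,\dd s$.

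\textbf{Positivity and sub-Markov property.} The matrix $\mathcal L$ has nonpositive off-diagonal entries. Its row sums are nonnegative, and they are strictly positive at the two vertices adjacent to the ghosts. Hence $e^{-t\mathcal L}$ is positivity preserving and sub-Markovian: $p_t\ge0$ and $\sum_j\omega_jp_t(k,j)\le1$. By symmetry of $p_t$, this gives $q_{k_{\mathrm{end}}}(t)\le1$.

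\textbf{On-diagonal bound \eqref{eq:epd-Nashheat}.} I would prove a discrete one-dimensional Nash inequality
\[
\|u\|_{\ell^2(\omega)}^{6}\lesssim \mathcal E(u)\,\|u\|_{\ell^1(\omega)}^{4}.
\]
It follows from the Gagliardo--Nirenberg-type bound
\[
\max_k|u_k|^2\lesssim \|u\|_{\ell^2(\omega)}\,\mathcal E(u)^{1/2}+\|u\|_{\ell^2(\omega)}^2/L,
\]
where the second term is dropped thanks to the Dirichlet ghosts. That bound comes from telescoping $|u_k|^2$ along the chain and applying Cauchy--Schwarz with $h_k\eqsim\omega_k$ (neighbouring weights are comparable). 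The standard Nash argument then gives $\|e^{-t\mathcal L}\|_{\ell^1(\omega)\to\ell^\infty}\lesssim t^{-1/2}$, which is \eqref{eq:epd-Nashheat}.

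\textbf{Off-diagonal bound \eqref{eq:epd-Davies} and short block \eqref{eq:epd-short-block}.} I would run Davies' perturbation method. Fix $\psi(k)=\alpha s_k$ with $\alpha\le c_0/N$. Since every edge satisfies $h_k\le CN$, we have $\alpha h_k\le Cc_0$. The twisted form $\mathcal E(e^{\psi}u,e^{-\psi}u)$ then satisfies the lower bound $-C\alpha^2\|u\|^2_{\ell^2(\omega)}$, because $(e^{\alpha h}-1)^2/h\lesssim\alpha^2 h$ uniformly. This yields
\[
\|e^{\psi}e^{-t\mathcal L}e^{-\psi}\|_{2\to2}\le e^{C\alpha^2t},
\]
and the Nash argument applied to the twisted semigroup gives $\ell^1\to\ell^\infty$ norm $\lesssim t^{-1/2}e^{C\alpha^2t}$.

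For \eqref{eq:epd-Davies}, take $t=R^2$ with $R\ge N$ and $\alpha=c/R\le c/N$. This gives
\[
p_{R^2}(k,j)\lesssim R^{-1}e^{-\alpha|s_k-s_j|+Cc^2}.
\]
For \eqref{eq:epd-short-block}, convert with $a_k=\sqrt{\omega_k}u_k$, so that $e^{-tM}$ corresponds to $e^{-t\mathcal L}$ conjugated by $\sqrt\omega$. Take $\alpha=c/N$ and $t\le N^2$, and use only the $\ell^2$ Davies bound applied to $\delta_{k_{\mathrm{end}}}$. Vertices with $k\le N/2$ satisfy $s_{k_{\mathrm{end}}}-s_k\gtrsim N^2$. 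The gain is therefore $e^{-cN^2/N+C}$, and the polynomial factors from $\omega$ are absorbed into $e^{-cN}$.

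\textbf{Survival bound \eqref{eq:epd-survival}; main obstacle.} This is the step I expect to be hardest. Set $v=e^{-t\mathcal L}\bm 1$; by symmetry $q_{k_{\mathrm{end}}}(t)=v_{k_{\mathrm{end}}}$. Since the ghost value vanishes,
\[
v_{k_{\mathrm{end}}}\le h_{k_{\mathrm{end}}}\,|\text{discrete derivative of }v\text{ on the last edge}|,\qquad h_{k_{\mathrm{end}}}\lesssim N.
\]
It therefore suffices to show that the discrete derivative of $v$ is $\lesssim t^{-1/2}$ pointwise. I would first try the splitting $v=e^{-t\mathcal L/2}w$ with $0\le w\le1$, and bound the $\ell^1$ norm (in $j$) of the edge difference of $p_{t/2}(\cdot,j)$ by $t^{-1/2}$. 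This bound would combine the Davies twist with analyticity, $\|\mathcal L e^{-t\mathcal L}\|\lesssim t^{-1}$, through the twisted-energy estimate $\mathcal E(e^{\psi}p_t)\lesssim t^{-3/2}e^{C\alpha^2t}$. If this pointwise gradient bound proves awkward, the fallback is a comparison argument: the explicit time-independent linear barrier $(L-s)/L$ is a supersolution, and I would combine it with the off-diagonal decay to localize to a region of length $\sqrt t$ near $k_{\mathrm{end}}$.
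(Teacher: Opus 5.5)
The parts for \eqref{eq:epd-Nashheat}, \eqref{eq:epd-Davies} and \eqref{eq:epd-short-block} follow the paper, but the survival bound \eqref{eq:epd-survival} is not proved: you leave it as a plan, and neither of your two plans closes as written.

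\textbf{The other three estimates.} Your Agmon-type telescoping is a valid substitute for the paper's route to the discrete Nash inequality (piecewise-linear interpolation plus Fourier splitting). One step is missing in the Davies part. Running Nash on the twisted semigroup also needs the twisted bound $\norm{T_\psi(t)}_{1\to1}\le e^{C\alpha^2t}$; the $\ell^2$ bound alone does not close the differential inequality. The paper obtains the $\ell^1$ bound from $(e^{\pm\psi}\mathcal L e^{\mp\psi}\bm 1)_k\ge -C\alpha^2$, positivity, and duality.

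\textbf{First plan for the survival bound.} Because the ghost value vanishes, on the last edge the $\ell^1(\omega)$ norm in $j$ of the normalized edge difference of $p_{t/2}(\cdot,j)$ is exactly $q_{k_{\mathrm{end}}}(t/2)/h_{k_{\mathrm{end}}}$. So the gradient bound you want at that edge is the survival estimate itself, not a reduction. The tool you name, $\mathcal E(e^\psi p_t(\cdot,j))\lesssim t^{-3/2}e^{C\alpha^2t}$, controls differences only in $\ell^2$ over edges for fixed $j$. At one edge of length $h\eqsim N$ it gives $h^{-1}\abs{p_t(k+2,j)-p_t(k,j)}\lesssim h^{-1/2}t^{-3/4}$. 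Summing over $j$ in a window of length $\sqrt t$ then yields only $q_{k_{\mathrm{end}}}(t)\lesssim (N/\sqrt t)^{1/2}$. That loss matters downstream, since \eqref{eq:epd-A1} and the convolution table need the full rate $N/\sqrt t$. Closing this route would require a genuinely pointwise gradient bound, for example a further Gagliardo--Nirenberg step on edge differences with sectorial estimates for the non-self-adjoint twisted generator. You do not supply this.

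\textbf{Fallback plan.} A time-independent supersolution $B$ only yields $e^{-t\mathcal L}B\le B$. Since $\bm 1\not\le CB$ in the boundary layer $\{\delta_j<\ell\}$, you are left with $\sum_{\delta_j<\ell}\omega_j p_t(k_{\mathrm{end}},j)$. This is exactly the near-boundary mass being estimated. Off-diagonal decay says nothing about it, and \eqref{eq:epd-Nashheat} bounds it only by $O(\ell/\sqrt t)$. Optimizing $\ell$ against $B_{k_{\mathrm{end}}}\eqsim N/\ell$ again gives only the square-root rate.

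\textbf{The paper's missing idea: pass to the resolvent.} Take $\lambda=1/t$ and $R=\sqrt t$, and let $\delta_k$ be the $s$-distance to the right ghost. The exponential barrier $B_k=R^2(1-e^{-\delta_k/R})$ satisfies $(\mathcal L+\lambda)B\ge\frac16\bm 1$. The zeroth-order term is what lets a stationary barrier dominate $\bm 1$ while still vanishing linearly at the ghost. The maximum principle then gives $U=(\mathcal L+\lambda)^{-1}\bm 1\le 6B\lesssim\min\{R^2,R\delta_k\}$. Since $\mathcal L\bm 1\ge 0$, $q_{k_{\mathrm{end}}}$ is nonincreasing. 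The Laplace representation of $U_{k_{\mathrm{end}}}$ therefore gives $t(1-e^{-1})\,q_{k_{\mathrm{end}}}(t)\le U_{k_{\mathrm{end}}}\lesssim\min\{t,\sqrt t\,h_{k_{\mathrm{end}}}\}$.

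\textbf{Repairing your fallback.} A self-improving estimate also works. By Chapman--Kolmogorov and \eqref{eq:epd-Nashheat},
\[
p_t(k_{\mathrm{end}},j)\le \sup_i p_{t/2}(i,j)\,q_{k_{\mathrm{end}}}(t/2)\lesssim t^{-1/2}q_{k_{\mathrm{end}}}(t/2).
\]
Take $B=\min\{1,\delta/\ell\}$, a minimum of two supersolutions, with $\ell=\epsilon\sqrt t\ge N$. The layer term is then at most $C\epsilon\, q_{k_{\mathrm{end}}}(t/2)$. With $\phi(t)=q_{k_{\mathrm{end}}}(t)\sqrt t/N$ and a fixed small $\epsilon$, this gives $\phi(t)\le\frac12\phi(t/2)+C$. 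The bound closes by dyadic induction in $t$, starting from $q_{k_{\mathrm{end}}}\le 1$ for $t\le N^2/\epsilon^2$.
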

	\begin{proof}
		
		The operator \(\mathcal L\) in \eqref{eq:epd-pathL} has nonpositive off-diagonal coefficients and Dirichlet conditions at the two ghost vertices. Hence \(\mathrm e^{-t\mathcal L}\) is positivity preserving and sub-Markovian.
		
		For a nodal vector \(u=(u_k)\), let \(\widetilde u\) be its continuous
		piecewise-linear interpolant in the coordinate \(s\), with value zero at
		the two ghost vertices and zero outside the path. The weighted nodal norms
		and the continuous norms satisfy
		\[
		\norm{u}_{\ell^p(\omega)}\lesssim\norm{\widetilde u}_{L^p(\R)}
		\lesssim \norm{u}_{\ell^p(\omega)},\qquad p=1,2.
		\]
		Define the discrete Dirichlet energy by
		\[
		\mathcal E(u):=
		\sum_{\text{edges }(k,k+2)}
		\frac{|u_{k+2}-u_k|^2}{h_k},
		\]
		Piecewise linearity gives $\mathcal E(u)=\int_{\R}|\widetilde u'(s)|^2\,\dd s.$ 
		For \(a>0\), splitting the Fourier integral into \(\{|\xi|\le a\}\) and \(\{|\xi|>a\}\) gives
		\[
		\norm{\widetilde u}_{L^2(\R)}^2
		\lesssim \left(
		a\norm{\widetilde u}_{L^1(\R)}^2
		+a^{-2}\norm{\widetilde u'}_{L^2(\R)}^2
		\right).
		\]
		Optimizing in \(a\) and using the norm equivalences above yields the discrete Nash inequality
		\[
		\norm{u}_{\ell^2(\omega)}^6
		\lesssim \mathcal E(u)\norm{u}_{\ell^1(\omega)}^4.
		\]
		
		Let \(u(t)=\mathrm e^{-t\mathcal L}f\) and
		$y(t):=\norm{u(t)}_{\ell^2(\omega)}^2$.
		Self-adjointness and the sub-Markov property give
		\[
		y'(t)=-2\mathcal E(u(t)),
		\qquad
		\norm{u(t)}_{\ell^1(\omega)}\le\norm{f}_{\ell^1(\omega)}.
		\]
		The Nash inequality therefore implies
		\[
		y'(t)=-2\mathcal E(u(t))
		\le -\frac{2\norm{u(t)}_{\ell^2(\omega)}^6}
		{C\norm{u(t)}_{\ell^1(\omega)}^4}
		\le -\frac{c\,y(t)^3}{\norm{f}_{\ell^1(\omega)}^4}.
		\]
		Integration gives
		$\norm{\mathrm e^{-t\mathcal L}f}_{\ell^2(\omega)}
		\lesssim t^{-1/4}\norm{f}_{\ell^1(\omega)}$.
		Self-adjointness gives the corresponding \(2\to\infty\) estimate; together
		with the semigroup identity, this gives
		\[
		\norm{\mathrm e^{-t\mathcal L}}_{1\to\infty}
		\le
		\norm{\mathrm e^{-(t/2)\mathcal L}}_{1\to2}
		\norm{\mathrm e^{-(t/2)\mathcal L}}_{2\to\infty}
		\lesssim t^{-1/2}.
		\]
		The weighted kernel of \(\mathrm e^{-t\mathcal L}\) is \(p_t(k,j)\), so positivity and the last estimate give $0\le p_t(k,j)\lesssim t^{-1/2},$ which proves \eqref{eq:epd-Nashheat}.
		
		We next estimate $q_{k_{\mathrm{end}}}(t)
		:=\sum_k\omega_kp_t(k,k_{\mathrm{end}})
		=(\mathrm e^{-t\mathcal L}\bm1)_{k_{\mathrm{end}}},$
		where \(\bm1=\mathbf{1}_{\mathcal I}\) denotes the indicator vector of \(\mathcal I\). Let $\delta_k:=s_{k_{\mathrm{end}}+2}-s_k$ be the \(s\)-distance from \(k\) to the right ghost vertex. For \(\lambda>0\), define
		\[
		U:=(\mathcal L+\lambda)^{-1}\bm1,
		\qquad
		R:=\lambda^{-1/2},
		\qquad
		B_k:=R^2\left(1-\mathrm e^{-\delta_k/R}\right).
		\]
		A direct calculation gives
		\[
		(\mathcal LB)_k\geq \mathrm e^{-\delta_k/R}\frac{R}{\omega_k}
		\left(
		\frac{\mathrm e^{h_k/R}-1}{h_k/R}-\frac{1-\mathrm e^{-h_{k-2}/R}}{h_{k-2}/R}
		\right)\ge\frac16\mathrm e^{-\delta_k/R}.
		\]
		Together with $\lambda B_k=1-\mathrm e^{-\delta_k/R},$ we obtain $(\mathcal L+\lambda) B\ge\frac16\bm1.$
		Since \(B\) is nonnegative at the left ghost vertex, the discrete maximum principle gives
		\begin{equation}\label{eq:Uk-bound}
			0\le U_k\le6B_k
			\lesssim \min\{R^2,R\delta_k\}.
		\end{equation}
		The resolvent formula gives
		\[
		U_{k_{\mathrm{end}}}=\int_0^\infty
		\mathrm e^{-\lambda r}q_{k_{\mathrm{end}}}(r)\,\dd r.
		\]
		For \(t>0\), choose \(\lambda=t^{-1}\). Monotonicity of \(q_{k_{\mathrm{end}}}\) yields
		\begin{equation}\label{eq:qkenfd-leq-U}
			U_{k_{\mathrm{end}}}
			\ge
			\int_0^t\mathrm e^{-r/t}q_{k_{\mathrm{end}}}(t)\,\dd r=t(1-\mathrm e^{-1})q_{k_{\mathrm{end}}}(t).
		\end{equation}
		Combining \eqref{eq:Uk-bound}, \eqref{eq:qkenfd-leq-U} and using $\delta_{k_{\mathrm{end}}} =h_{k_{\mathrm{end}}}\asymp N$, we obtain \eqref{eq:epd-survival}.
		
		It remains to prove the spatial localization estimate. For a real nodal vector \(\psi=(\psi_k)\), let \(\mathrm e^\psi\) denote diagonal multiplication by \(\mathrm e^{\psi_k}\). Fix \(R\ge N\), choose $\psi_k=\pm\frac{s_k}{10R},$ and define $T_\psi(t):=\mathrm e^\psi\mathrm e^{-t\mathcal L}\mathrm e^{-\psi}.$
		
		Assign the same formula $\psi_k=\pm s_k/(10R)$ at the two ghost vertices, while retaining zero Dirichlet values for the input vectors. The coordinate increments satisfy
		\[
		\frac{\psi_k-\psi_{k-2}}{h_{k-2}}+\frac{\psi_k-\psi_{k+2}}{h_k}=0.
		\]
		Since the ghost vertices contribute no off-diagonal terms to the Dirichlet operator, we have
		\[
		\begin{aligned}
			\left(\mathrm e^\psi\mathcal L\mathrm e^{-\psi}\mathbf1\right)_k
			&\ge\frac1{\omega_k}\left(
			\frac{1-\mathrm e^{\psi_k-\psi_{k-2}}}{h_{k-2}}
			+\frac{1-\mathrm e^{\psi_k-\psi_{k+2}}}{h_k}
			\right)\\
			&\ge-\frac{C(h_{k-2}+h_k)}{\omega_kR^2}
			\ge-CR^{-2}.
		\end{aligned}
		\]
		Here Taylor's formula, $h_k\lesssim N$, $R\ge N$, and $h_{k-2}+h_k=2\omega_k$ control the quadratic remainders. Applying the same estimate with $-\psi$, positivity and weighted duality give the $1\to1$ and $\infty\to\infty$ bounds below. For the quadratic-form estimate, conjugation leaves the boundary terms unchanged, while the interior-edge corrections are controlled by
		\[
		\cosh(\psi_{k+2}-\psi_k)-1\lesssim h_k^2/R^2.
		\]
		Consequently, we have
		\[
		\begin{aligned}
			\norm{T_\psi(t)}_{1\to1}\le\mathrm e^{Ct/R^2},
			\qquad
			\norm{T_\psi(t)}_{\infty\to\infty}\le\mathrm e^{Ct/R^2},\\
			\operatorname{Re}
			\ip{\mathrm e^\psi\mathcal L\mathrm e^{-\psi}v}{v}_\omega
			\ge
			\mathcal E(v)-CR^{-2}\norm{v}_{\ell^2(\omega)}^2.
		\end{aligned}
		\]
		Applying the preceding Nash argument to \(T_\psi(t)\), and using the semigroup property at $t/2$ and $T_\psi(t)^*=T_{-\psi}(t),$ gives
		\[
		\norm{T_\psi(t)}_{1\to\infty}\leq \norm{T_\psi(t/2)}_{1\to2} \norm{T_\psi(t/2)}_{2\to\infty} 
		\lesssim t^{-1/2}\mathrm e^{Ct/R^2}.
		\]
		The weighted kernel of \(T_\psi(t)\) is $\mathrm e^{\psi_k-\psi_j}p_t(k,j),$ so we have
		\[
		p_t(k,j) \lesssim t^{-1/2}
		\exp\left(\frac{Ct}{R^2}-\psi_k+\psi_j\right).
		\]
		Choose the sign of \(\psi\) such that $\psi_k-\psi_j=\frac{|s_k-s_j|}{10R},$
		and take \(t=R^2\). Absorbing the fixed factor \(\mathrm e^C\) into the implicit constant gives
		\[
		p_{R^2}(k,j)
		\lesssim \frac{1}{R}
		\exp\left(-\frac{c|s_k-s_j|}{R}\right),
		\]
		which proves \eqref{eq:epd-Davies}.
		
		The quadratic-form estimate also gives the short-time block bound. Under the unitary change of coordinates \(a_k=\sqrt{\omega_k}u_k\), the operator \(\mathcal L\) is represented by \(M\), and diagonal multiplication by \(\mathrm e^\psi\) is unchanged. 
		Take $R=N$ and $\psi_k=-s_k/(10N)$. Define $s_{N/2}=\frac12\left(\frac N2-\frac12\right)^2$. For every vertex with $k\le N/2$, we have $s_k\le s_{N/2}$ and hence
		\[
		s_{k_{\mathrm{end}}}-s_k
		\ge s_{k_{\mathrm{end}}}-s_{N/2}.
		\]
		Therefore, for $0\le t\le N^2$,
		\[
		\norm{\bm1_{k\le N/2}\mathrm e^{-tM}e_{k_{\mathrm{end}}}}_{\ell^2}
		\le
		\exp\left(
		\frac{Ct}{N^2}
		-\frac{s_{k_{\mathrm{end}}}-s_{N/2}}{10N}
		\right),
		\]
		where \(\bm1_{k\le N/2}\) is the coordinate projection onto the vertices satisfying \(k\le N/2\), and \(e_{k_{\mathrm{end}}}\) is the Euclidean coordinate vector at \(k_{\mathrm{end}}\). Using $s_{k_{\mathrm{end}}}-s_{N/2}\ge cN^2,$ and absorbing the fixed factor \(\mathrm e^C\) into \(C\), we obtain \eqref{eq:epd-short-block}
	\end{proof}
	
	\begin{lemma}\label{lem:epd-unimodal}
		For $k\in\mathcal I$ and fixed $t>0$ the sequence
		\[
		u_k(t)=\frac{(\mathrm e^{-tM}e_{k_{\mathrm{end}}})_k}{\sqrt{\omega_k}}
		=\sqrt{\omega_{k_{\mathrm{end}}}}\,p_t(k,k_{\mathrm{end}})
		\]
		is positive and unimodal along its parity chain, with zero ghost values.
		The total variation of the sequence satisfies
		\[
		\sum_{k\in\mathcal I}|u_k(t)-u_{k-2}(t)|
		+|u_{k_{\mathrm{end}}+2}(t)-u_{k_{\mathrm{end}}}(t)|
		\le 2\max_{k\in\mathcal I}u_k(t).
		\]
		\[
		\sum_{\substack{k\in\mathcal I\\ k\le m}}|u_k(t)-u_{k-2}(t)|
		\le 2\max_{\substack{k\in\mathcal I\\ k\le m}}u_k(t),
		\qquad m\in\mathcal I.
		\]
	\end{lemma}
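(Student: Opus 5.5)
The plan is to view $u(t)=\bigl(u_k(t)\bigr)_{k\in\mathcal I}$ as the solution of the discrete heat equation $\dot u=-\mathcal L u$ on the parity chain. By the relation $(\mathcal Lu)_k=(Ma)_k/\sqrt{\omega_k}$ with $a_k=\sqrt{\omega_k}u_k$, the vector $a(t)=\mathrm e^{-tM}e_{k_{\mathrm{end}}}$ corresponds to $u(t)=\mathrm e^{-t\mathcal L}u(0)$ with $u(0)=e_{k_{\mathrm{end}}}/\sqrt{\omega_{k_{\mathrm{end}}}}$. Writing this in terms of the weighted kernel gives $u_k(t)=p_t(k,k_{\mathrm{end}})\,\omega_{k_{\mathrm{end}}}/\sqrt{\omega_{k_{\mathrm{end}}}}$, which is the identity in the statement.

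Positivity will come from irreducibility. $\mathcal L$ is a tridiagonal (along the chain) symmetric operator with strictly negative off-diagonal entries $-1/(\omega_kh_k)$, so $\mathrm e^{-t\mathcal L}=\mathrm e^{-tc}\mathrm e^{t(c-\mathcal L)}$ with $c-\mathcal L$ nonnegative and irreducible for large $c$. Hence every entry of $\mathrm e^{-t\mathcal L}$ is strictly positive for $t>0$, and $u_k(t)>0$ for all $k\in\mathcal I$.

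For unimodality I would use a sign-change (variation-diminishing) argument on the discrete differences $D_k(t):=u_k(t)-u_{k-2}(t)$, including the two ghost edges.
\begin{itemize}
\item Differencing the heat equation shows that $D$ satisfies a tridiagonal linear system with nonnegative off-diagonal coupling. The standard Sturm/Nadirashvili-type fact for such Jacobi systems is that the number of sign changes of $D(t)$ is nonincreasing in $t$.
\item At the initial time the difference sequence is $(0,\dots,0,+,-)$, since the input is concentrated at $k_{\mathrm{end}}$ and the right ghost value is zero. So it has exactly one sign change.
\item For $t>0$ the left ghost difference $D=u_{k_{\min}}>0$ and the right ghost difference is $-u_{k_{\mathrm{end}}}<0$. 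Hence $D(t)$ has at least one sign change, and by the monotonicity exactly one.
\end{itemize}
Thus $u(t)$ is nondecreasing and then nonincreasing, i.e. unimodal. If the differenced-system route proves awkward, I would instead approximate the initial datum by strictly unimodal data and use the discrete maximum principle on $D$ directly: a new interior local extremum of $u$ would require $D$ to vanish with a sign pattern forbidden by the equation $\dot D_k=$ (positive combination of neighbours minus diagonal term).

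This step, justifying the variation-diminishing property for the differenced chain including the boundary edges and the degenerate initial datum, is the main obstacle.

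The total-variation bounds then follow directly. For a unimodal sequence with zero boundary values, the increments up to the peak sum to $\max u$, and the decrements after it sum to $\max u$, giving the bound $2\max u$. For the truncated sum up to $m$, the sequence restricted to $k\le m$ is still unimodal, starting from the ghost value $0$. Its total variation is therefore $(\text{peak}-0)+(\text{peak}-u_m)\le 2\max_{k\le m}u_k$.
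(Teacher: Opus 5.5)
Your proposal follows the paper's argument essentially step for step: you use irreducibility for positivity, the closed differenced system $\dot v=Av$ with $A$ tridiagonal and nonnegative off-diagonal, the nonincreasing sign-change count, the single initial sign change, the signs $v_0(t)>0$ and $v_n(t)<0$ for $t>0$, and the increasing/decreasing count for both total-variation bounds. The step you flag as the main obstacle is handled in the paper by the Lie product formula: $e^{tA}$ is a limit of products of a positive diagonal exponential and elementary factors $I_{n+1}+sA_{ij}E_{ij}$ with $|i-j|=1$, each adding a nonnegative multiple of one coordinate to its neighbour and hence unable to increase the sign-change count, and lower semicontinuity of that count passes the bound to the limit.
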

	\begin{proof}
		We prove unimodality by showing that the edge differences change sign exactly once, from positive to negative. Relabel the interior vertices of the chosen parity chain by $i=1,\ldots,n$, and renumber their weights and edge lengths accordingly. Set $u_0(t)=u_{n+1}(t)=0$ at the two ghost vertices. For $0\le i\le n$, let $c_i=h_i^{-1}$ denote the conductance of the edge $(i,i+1)$ and define its difference by $v_i(t):=u_{i+1}(t)-u_i(t)$. The heat equation reads
		\[
		\dot u_i=\frac{c_i v_i-c_{i-1}v_{i-1}}{\omega_i},
		\qquad 1\le i\le n.
		\]
		Taking adjacent differences gives
		\begin{align*}
			\dot v_0&=-\frac{c_0}{\omega_1}v_0+\frac{c_1}{\omega_1}v_1,\\
			\dot v_i&=\frac{c_{i-1}}{\omega_i}v_{i-1}
			-c_i\left(\frac1{\omega_i}+\frac1{\omega_{i+1}}\right)v_i
			+\frac{c_{i+1}}{\omega_{i+1}}v_{i+1},
			\qquad 1\le i\le n-1,\\
			\dot v_n&=\frac{c_{n-1}}{\omega_n}v_{n-1}-\frac{c_n}{\omega_n}v_n.
		\end{align*}
		Hence the edge-difference vector $v(t):=(v_0(t),\ldots,v_n(t))^{\top}$ satisfies $\dot v=Av$, where $A$ is tridiagonal and $A_{ij}\ge0$ for $i\ne j$.
		
		We count sign changes after deleting zero entries. Let $I_{n+1}$ be the identity matrix on $\mathbb R^{n+1}$ and let $E_{ij}$ have a single $1$ in position $(i,j)$ and zeros elsewhere. The decomposition
		\[
		A=\operatorname{diag}(A_{00},\ldots,A_{nn})
		+\sum_{\substack{0\le i,j\le n\\ |i-j|=1}}A_{ij}E_{ij}
		\]
		shows why the evolution cannot increase the number of sign changes. The exponential of the diagonal part multiplies each coordinate by a positive number, while
		\[
		e^{sA_{ij}E_{ij}}=I_{n+1}+sA_{ij}E_{ij},
		\qquad s\ge0,\quad |i-j|=1,
		\]
		adds a nonnegative multiple of one coordinate to its neighbour. Neither operation increases the number of sign changes. By the Lie product formula, $v(t)=e^{tA}v(0)$ has no more sign changes than $v(0)$.
		
		The initial point source at the last interior vertex gives
		\[
		v_{n-1}(0)=\omega_n^{-1/2},\qquad
		v_n(0)=-\omega_n^{-1/2},\qquad
		v_i(0)=0\quad(0\le i<n-1),
		\]
		so $v(0)$ has exactly one sign change. For $t>0$, positivity of the heat flow on the connected chain gives $u_i(t)>0$ for $1\le i\le n$, and hence
		\[
		v_0(t)=u_1(t)>0,\qquad v_n(t)=-u_n(t)<0.
		\]
		The differences therefore change sign exactly once, from positive to negative, proving that $u(t)$ is unimodal.
		
		Finally, summing the increasing and decreasing parts of any initial segment yields
		\[
		\begin{aligned}
			\sum_{i=0}^{m-1}|u_{i+1}(t)-u_i(t)|
			&=2\max_{0\le i\le m}u_i(t)-u_m(t)\\
			&\le 2\max_{0\le i\le m}u_i(t),
			\qquad 1\le m\le n+1.
		\end{aligned}
		\]
		Taking $m=n+1$ proves the total variation bound including both ghosts; taking $m\le n$ proves the bound on each initial segment.
	\end{proof}

	\subsection{Estimates for Sources at the Terminal Mode Indices}
	For the terminal index $k_{\mathrm{end}}$ of either parity chain, define
	the following amplitudes for the function and its derivative:
	\[
	\mathcal A_a(t):=\sup_{x\in I}|z^a(x)^{\top}\mathrm e^{-tM}e_{k_{\mathrm{end}}}|,\qquad a=0,1.
	\]
	
	\begin{lemma}\label{prop:epd-Asources}
		Uniformly in $N$ and $t\ge0$,
		\begin{align}
			\mathcal A_1(t)&\lesssim \sqrt N(1+t/N^2)^{-1/2},\label{eq:epd-A1}\\
			\mathcal A_0(t)&\lesssim \sqrt{\Lambda_N}\,N^{-1}(1+t/N^2)^{-3/4}.\label{eq:epd-A0}
		\end{align}
	\end{lemma}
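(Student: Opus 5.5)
The plan is to write $c(t):=\mathrm e^{-tM}e_{k_{\mathrm{end}}}$. By Lemma~\ref{lem:epd-unimodal}, $c_k(t)=\sqrt{\omega_k}\,u_k(t)$ with $u_k(t)=\sqrt{\omega_{k_{\mathrm{end}}}}\,p_t(k,k_{\mathrm{end}})\ge0$, and $c$ is supported on the parity chain of $k_{\mathrm{end}}$. Both amplitudes are then sums $\sum_k c_k(t)\,z^a_k(x)$, which I will control through the heat-kernel information of Lemma~\ref{lem:epd-path-heat} and the unimodality of Lemma~\ref{lem:epd-unimodal}.

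\emph{Derivative amplitude \eqref{eq:epd-A1}.} Here $z^1_k=\ell_{k-1}$, so the Bernstein bound \eqref{eq:bernstein} gives $|\ell_{k-1}(x)|\lesssim\sqrt k$. Since $\sqrt k\,\sqrt{\omega_k}\eqsim\omega_k$ and $u_k\ge0$,
\[
\mathcal A_1(t)\lesssim\sum_k\omega_k u_k(t)=\sqrt{\omega_{k_{\mathrm{end}}}}\,q_{k_{\mathrm{end}}}(t).
\]
The survival bound \eqref{eq:epd-survival} then yields $\mathcal A_1(t)\lesssim\sqrt N\min\{1,N/\sqrt t\}\eqsim\sqrt N(1+t/N^2)^{-1/2}$. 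Positivity is what makes this step work: no cancellation is needed.

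\emph{Function amplitude \eqref{eq:epd-A0}.} Here $z^0_k=-\eta_k=-(L_{k-2}-L_k)/\sqrt{4k-2}$ and $\sqrt{\omega_k}/\sqrt{4k-2}=1/\sqrt2$. Hence
\[
z^0(x)^{\top}c(t)=-\tfrac1{\sqrt2}\sum_k u_k(t)\bigl(L_{k-2}(x)-L_k(x)\bigr).
\]
Summation by parts along the parity chain, using the zero ghost values, turns this into $\tfrac1{\sqrt2}\sum_k (u_{k+2}-u_k)\,L_k(x)$ plus boundary terms. Three estimates will be inserted into this sum.
\begin{enumerate}[label=(\roman*)]
\item The total variation bound of Lemma~\ref{lem:epd-unimodal}, including its initial-segment version.
\item The short-block bound \eqref{eq:epd-short-block}. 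For $t\le N^2$ it shows that the indices $k\le N/2$ contribute only $\mathrm e^{-cN}$.
\item The Legendre envelope $|L_k(\cos\theta)|\lesssim\min\{1,(k\sin\theta)^{-1/2}\}$ on the remaining indices $k\eqsim N$.
\end{enumerate}

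For $t\le N^2$ this reduces the amplitude to $\max_k u_k\cdot\min\{1,(N\sin\theta)^{-1/2}\}$. The crude estimate $\max u\le\sqrt N\,\sup p_t\lesssim\sqrt{N/t}$ is too weak near $t=0$. I would therefore also use the direct bound $|\eta_k|\lesssim 1/k$ from Lemma~\ref{lem:epd-Jacobi}. Together with $\|c\|_{\ell^1}$ restricted to $k\ge N/2$, this gives roughly $N^{-1}\sqrt{\omega_{k_{\mathrm{end}}}}\,q_{k_{\mathrm{end}}}/\sqrt N\lesssim N^{-1}$. I would then interpolate between the two bounds in the envelope variable $\sin\theta$ after a dyadic split in $\sin\theta$. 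I expect the $\sqrt{\Lambda_N}$ to come from a Cauchy--Schwarz over the $O(\log N)$ dyadic shells.

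For $t\ge N^2$ the target is $N^{-1}(t/N^2)^{-3/4}=N^{1/2}t^{-3/4}$. Here I would set $R=\sqrt t\ge N$ and exploit that the source sits one edge ($h\eqsim N$) from the right Dirichlet ghost. I would combine the Davies bound \eqref{eq:epd-Davies} with the Dirichlet boundary layer, using a barrier of the type $B_k$ in the proof of Lemma~\ref{lem:epd-path-heat}. The goal is to show that $u_k$ carries an extra factor $N/R$ relative to $\sqrt N/R$ in the region where $|L_k|$ is not small, and that the part of $c$ spread to small $k$ sees $|\eta_k|\lesssim 1/k$ only against an $\ell^1$ mass decaying like $q_{k_{\mathrm{end}}}\lesssim N/R$.

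\emph{Main obstacle.} Obtaining exactly the exponent $3/4$ in \eqref{eq:epd-A0} for large $t$ is where I expect the difficulty, because pure total-variation telescoping only gives $t^{-1/2}$. My first attempt would be to split $\sum_k(u_{k+2}-u_k)L_k$ at the index where $s_{k_{\mathrm{end}}}-s_k\eqsim R$. Beyond that index I would use the Dirichlet boundary gain, and below it the $L_k$ envelope $(k\sin\theta)^{-1/2}$ combined with the Davies decay. The $x$-dependence would be handled through the same dyadic decomposition in $\sin\theta$.
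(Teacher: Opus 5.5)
\emph{Overall.} Your bound for $\mathcal A_1$ is exactly the paper's. For $t\le N^2$, the direct bound you mention already finishes the proof. Indeed, $\norm{\eta_k}_\infty\lesssim 1/N$ on $k\ge N/2$ is paired with $\sum_{k\ge N/2}c_k\lesssim N^{-1/2}\sum_k\omega_ku_k\lesssim1$, and \eqref{eq:epd-short-block} handles $k\le N/2$. This gives $\mathcal A_0\lesssim N^{-1}$, so no dyadic interpolation in $\sin\theta$ is needed there, and $\sqrt{\Lambda_N}$ does not arise in that range. The genuine gap is at large times, $R=\sqrt t\ge N$.

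\emph{First problem: the height bound.} Your claim that telescoping only gives $t^{-1/2}$ overlooks a bound on $H:=\max_k u_k(t)$. By the semigroup property,
\[
p_t(k,k_{\mathrm{end}})=\sum_j\omega_jp_{t/2}(k,j)p_{t/2}(j,k_{\mathrm{end}})\le\sup p_{t/2}\cdot q_{k_{\mathrm{end}}}(t/2)\lesssim N/t,
\]
so $H\lesssim N^{3/2}/R^2$, with no barrier argument needed. Together with Lemma~\ref{lem:epd-unimodal}, this gives $\mathcal A_0\le\sqrt2\,H$, which already proves the claim for $R\ge N^2/(B\Lambda_N)$. That threshold is where $\sqrt{\Lambda_N}$ actually enters. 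It is forced by requiring $\sqrt R\,\mathrm e^{-cN^2/R}\lesssim1$ for the low-index part $k\le N/2$, which is controlled by \eqref{eq:epd-Davies} and the initial-segment variation bound in the complementary range. It does not come from a Cauchy--Schwarz over dyadic shells.

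\emph{Second problem: the endpoint regime.} More seriously, for $N\le R<N^2/(B\Lambda_N)$ your tools cannot reach $\sqrt N R^{-3/2}$ when $\sin\theta\lesssim N/R$.
\begin{itemize}
\item After summation by parts, any bound using only $|L_k|\lesssim\min\{1,(k\sin\theta)^{-1/2}\}$ and the total variation is at best $H\min\{1,(N\sin\theta)^{-1/2}\}$.
\item Without summation by parts, $|\eta_k|\lesssim1/k$ only gives $N^{-3/2}U\lesssim1/R$, where $U=\sum_k\omega_ku_k\lesssim N^{3/2}/R$.
\end{itemize}
For example, at $R=N^{3/2}$ and $\sin\theta\eqsim N^{-1}$, both bounds give $N^{-3/2}$, while the target is $N^{-7/4}$. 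The Dirichlet boundary-layer gain you propose does not repair this. The rising part of the unimodal profile still has variation $\eqsim H$. What is lost is the cancellation between the rising and falling parts, which occurs precisely when $L_k$ barely varies across the support of width about $R/N$.

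\emph{The missing ingredient.} Use the difference bound \eqref{eq:epd-leg-diff}, $|L_k-L_{k-2}|\lesssim k^{-1/2}\sqrt{\sin\theta}$. Apply it to the high-index part of $\sum_ku_k(L_k-L_{k-2})$ \emph{before} summing by parts, together with positivity and $U$. This gives $\sqrt{\sin\theta}/R$. The geometric mean of this with $H\min\{1,(N\sin\theta)^{-1/2}\}$ is exactly $\sqrt N R^{-3/2}$ when $\sin\theta\ge N^{-1}$. When $\sin\theta<N^{-1}$, the first bound alone gives $(\sqrt N R)^{-1}\le\sqrt NR^{-3/2}$, because $R\le N^2$.
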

	\begin{proof}
		We estimate $\mathcal A_1$ using positivity and weighted mass decay. For $\mathcal A_0$, we combine heat localization with summation by parts to exploit cancellation in the basis expansion. 
		The vector $e^{-tM}e_{k_{\mathrm{end}}}$ has nonnegative entries. Since
		$|\ell_{k-1}(x)|\le\sqrt{\omega_k/2}$,
		\[
		\mathcal A_1(t)\le\frac1{\sqrt2}\sum_k\omega_k u_k(t)
		=\sqrt{\omega_{k_{\mathrm{end}}}/2}\,q_{k_{\mathrm{end}}}(t).
		\]
		Equation \eqref{eq:epd-survival} proves \eqref{eq:epd-A1}.
		
		First suppose $0\le t\le N^2$. On $k\ge N/2$,
		$\norm{\eta_k}_\infty\lesssim 1/N$, while
		\[
		\sum_{k\ge N/2}(\mathrm e^{-tM}e_{k_{\mathrm{end}}})_k
		\lesssim N^{-1/2}\sum_k\omega_k u_k(t)\lesssim 1.
		\]
		For the remaining indices use \eqref{eq:epd-short-block} and
		$\sup_x\sum_{k=2}^N\eta_k(x)^2\lesssim 1$.
		The latter follows from the norm of point evaluation in $H^1_0(I)$,
		because the $\eta_k$ are energy-orthonormal.
		Thus $\mathcal A_0(t)\lesssim 1/N$ in this range.
		
		For the remaining time range, let $R=\sqrt t\ge N$ denote the diffusion
		length. Equations
		\eqref{eq:epd-Nashheat} and \eqref{eq:epd-survival}, together with the semigroup property at $t/2$,
		give
		\begin{equation}\label{eq:epd-mass-height}
			U:=\sum_k\omega_k u_k(t)\lesssim \frac{N^{3/2}}R,
			\qquad H:=\max_k u_k(t)\lesssim \frac{N^{3/2}}{R^2}.
		\end{equation}
		The basis expansion gives
		\begin{equation}\label{eq:epd-Abel-synthesis}
			z^0(x)^{\top}\mathrm e^{-tM}e_{k_{\mathrm{end}}}
			=\frac1{\sqrt2}\sum_k u_k(t)\,[L_k(x)-L_{k-2}(x)].
		\end{equation}
		Since the ghost values vanish, summation by parts gives
		\[
		\sum_{k\in\mathcal I}u_k(t)\bigl[L_k(x)-L_{k-2}(x)\bigr]
		=
		u_{k_{\mathrm{end}}}(t)L_{k_{\mathrm{end}}}(x)
		-\sum_{k\in\mathcal I}
		\bigl[u_k(t)-u_{k-2}(t)\bigr]L_{k-2}(x).
		\]
		Using $|L_j(x)|\le1$ and Lemma 4.9, we obtain
		\[
		\left|
		\sum_{k\in\mathcal I}u_k(t)\bigl[L_k(x)-L_{k-2}(x)\bigr]
		\right|
		\le
		u_{k_{\mathrm{end}}}(t)
		+\sum_{k\in\mathcal I}|u_k(t)-u_{k-2}(t)|
		\le 2H.
		\]
		Hence $A_0(t)\le\sqrt{2}H$. 
		
		If $R\ge N^2/(B \Lambda_N)$, for a sufficiently large constant $B$,
		this already gives
		\[
		\mathcal A_0(t)\lesssim \frac{N^{3/2}}{R^2}
		\lesssim \frac{\sqrt{\Lambda_N}\sqrt N}{R^{3/2}}.
		\]
		
		It remains to consider $N\le R<N^2/(B \Lambda_N)$, a range which may be empty
		for small $N$. Split the sum at $k\asymp N/2$. On the low-index part,
		\eqref{eq:epd-Davies} gives
		\[
		\max_{k\le N/2}u_k(t)
		\lesssim \frac{\sqrt N}R\exp(-cN^2/R).
		\]
		Summation by parts on this initial segment, together with the variation bound for initial segments in Lemma~\ref{lem:epd-unimodal}, bounds its contribution by the same quantity. 		
		For sufficiently large $B$, this contribution is bounded by $C\sqrt N\,R^{-3/2}$, since $\sqrt R\exp(-cN^2/R)\lesssim1$ throughout this range.
		
		For the high-index part write $x=\cos\theta$ and $b=\sin\theta$.
		Using the difference bound in \eqref{eq:epd-leg-diff} and positivity gives
		\[
		\left|\sum_{k\ge N/2}u_k(L_k-L_{k-2})\right|
		\lesssim \frac{\sqrt b}{\sqrt N}\frac UN
		\lesssim \frac{\sqrt b}{R}.
		\]
		On the other hand, summation by parts, unimodality, and the Bernstein
		estimate \eqref{eq:bernstein}, with $L_k=\sqrt{2/(2k+1)}\,\ell_k$, give
		\[
		\left|\sum_{k\ge N/2}u_k(L_k-L_{k-2})\right|
		\lesssim \frac{N^{3/2}}{R^2}\min\{1,(Nb)^{-1/2}\}.
		\]
		For $b\ge N^{-1}$, taking the geometric mean of the two bounds gives an upper bound of $C\sqrt N\,R^{-3/2}$.
		For $b<N^{-1}$, the first estimate gives the bound $C(\sqrt N R)^{-1}$, which is at most $C\sqrt N\,R^{-3/2}$ because $R\le N^2$.
	\end{proof}
	\subsection{Completion of the Upper Bound}
	\begin{proof}[Proof of Theorem~\ref{thm:epd-main}]		
		
		%
		%

		We first establish the four bounds for $h_{ab}$, then estimate the time integral representing $\mathbb T_N$. For $a,b\in\{0,1\}$ and admissible $\nu$, recall the error kernels $E_{t,\nu}^{ab}$ defined in (4.5). We use the decomposition \eqref{eq:epd-kernel-split}, established in Section~4.1.
		
		Lemma~\ref{prop:epd-mainterms} controls the main kernel \(\mathcal K_t^{ab}\). We next estimate each error kernel \(E_{t,\nu}^{ab}\)
		Fix one admissible $\nu$, put $k_{\mathrm{end}}=N-\nu$ and $m=N+1-\nu$, and define $B_b(t):=\|D_bS(t)\ell_m\|_1.$
		Using $\rho_{N,\nu}\lesssim N^{-2}$ gives
		\begin{equation}\label{eq:epd-convbound}
			\sup_{x\in I}\int_I|E_{t,\nu}^{ab}(x,y)|\,\dd y
			\lesssim N^{-2}\int_0^t \mathcal A_a(t-s)B_b(s)\,\dd s.
		\end{equation}
		Lemmas~\ref{prop:epd-Asources} and~\ref{lem:epd-exterior} give bounds for $\mathcal A_a$ and $B_b$. 
		Set $T=t/N^2$. The substitution $s=N^2v$ cancels the factor $N^{-2}$ in \eqref{eq:epd-convbound}:
		\[
		N^{-2}\int_0^t \mathcal A_a(t-s)B_b(s)\,\dd s
		=\int_0^T \mathcal A_a(N^2(T-v))B_b(N^2v)\,\dd v.
		\]
		For $\alpha\in\{1/2,3/4\}$ and $\beta\in\{3/4,5/4\}$, splitting the integral at $T/2$ yields
		\[
		\int_0^T(1+T-v)^{-\alpha}(1+v)^{-\beta}\,\dd v\lesssim 
		\begin{cases}
			(1+T)^{1-\alpha-\beta},&\beta=3/4,\\
			(1+T)^{-\alpha},&\beta=5/4.
		\end{cases}
		\]
		Substituting the four source bounds therefore gives
		\begin{equation}\label{eq:epd-conv-table}
			\sup_{x\in I}\int_I|E_{t,\nu}^{ab}(x,y)|\,\dd y
			\lesssim 
			\begin{cases}
				\sqrt N\,F_N(t)^{-1/4},&(a,b)=(1,1),\\
				\sqrt{\Lambda_N}N^{-1}F_N(t)^{-1/2},&(a,b)=(0,1),\\
				N^{-1/2}F_N(t)^{-1/2},&(a,b)=(1,0),\\
				\sqrt{\Lambda_N}N^{-2}F_N(t)^{-3/4},&(a,b)=(0,0).
			\end{cases}
		\end{equation}
		Using $1+t\le N^2F_N(t)$ and $\sqrt{\Lambda_N}\le \Lambda_N$, combine \eqref{eq:epd-conv-table} with Lemma~\ref{prop:epd-mainterms} and sum over at most two values of $\nu$ to obtain \eqref{eq:epd-four-targets}. 
		
		It remains to estimate the integral in \eqref{eq:epd-route-time-integral}. Multiplying the bounds in \eqref{eq:epd-four-targets} gives
		\begin{equation}\label{eq:epd-product-target}
			h_{11}(t)h_{00}(t)+h_{10}(t)h_{01}(t)
			\lesssim \frac{\sqrt N\,\Lambda_N^2}{1+t}.
		\end{equation}
		To control the integral for large $t$, Lemma~\ref{lem:mass-spectrum} gives $\|\mathrm e^{-tM}\|_{2\to2}\le \mathrm e^{-ct/N^4}$ for some $c>0$ independent of $N$. The vectors of basis functions satisfy
		\[
		\begin{aligned}
			\sup_{x\in I}\|z^1(x)\|_2\lesssim N,&\qquad
			\int_I\|z^1(y)\|_2\,\dd y\lesssim \sqrt N,\\
			\sup_{x\in I}\|z^0(x)\|_2&+\int_I\|z^0(y)\|_2\,\dd y\lesssim 1.
		\end{aligned}
		\]
		Cauchy--Schwarz in the coefficient vectors therefore gives
		\[
		h_{ab}(t)
		\le \sup_{x\in I}\|z^a(x)\|_2
		\int_I\|z^b(y)\|_2\,\dd y\,
		\mathrm e^{-ct/N^4}
		\lesssim N^2\mathrm e^{-ct/N^4}.
		\]	
		Choose a fixed $A$ with $2cA\ge8$ and set $t_*=AN^4\Lambda_N$. Equation \eqref{eq:epd-product-target} gives
		\begin{equation}\label{eq:epd-integral-main}
			\int_0^{t_*}[h_{11}h_{00}+h_{10}h_{01}]\,\dd t
			\lesssim \sqrt N\,\Lambda_N^2\log(1+t_*)
			\lesssim \sqrt N\,\Lambda_N^3.
		\end{equation}
		Here $\log(1+t_*)\lesssim 1+4\log N+\log \Lambda_N\lesssim \Lambda_N$, so the time integral contributes the third factor $\Lambda_N$. The exponential bound controls the remaining integral:
		\[
		\begin{aligned}
			\int_{t_*}^\infty[h_{11}h_{00}+h_{10}h_{01}]\,\dd t
			&\lesssim N^4\int_{t_*}^\infty\mathrm e^{-2ct/N^4}\,\dd t\\
			&\lesssim N^8\mathrm e^{-2cA\Lambda_N}
			\lesssim N^8(N+1)^{-2cA}\lesssim 1.
		\end{aligned}
		\]
		Combining the two time intervals proves $\|\mathbb T_N\|_{\infty\to\infty}\lesssim \sqrt N\,\Lambda_N^3,$ completing the proof.
	\end{proof}
	
	\subsection{Elliptic Approximation}
	\label{sec:consequences}
	As a consequence of the stability estimates established above, we obtain the following approximation result. 
	\begin{corollary}\label{cor:ritz-approximation}
		For $u\in\mathcal W$,
		\begin{equation}\label{eq:ritz-best-infty}
			\norm{\grad(u-R_Nu)}_\infty
			\le (1+S_{N,\infty})E_N^{1,\infty}(u)
			\lesssim \sqrt N \Lambda_N^3 E_N^{1,\infty}(u).
		\end{equation}
		For every $v\in H^1_0(\Om)$,
		\begin{equation}\label{eq:ritz-best-two}
			\norm{\grad(v-R_Nv)}_2=E_N^{1,2}(v),\qquad
			\norm{v-R_Nv}_2\lesssim E_N^{1,2}(v).
		\end{equation}
	\end{corollary}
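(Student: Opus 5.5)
The Lipschitz estimate \eqref{eq:ritz-best-infty} is the standard quasi-optimality argument, based on the fact that $R_N$ reproduces $V_N$. Fix $u\in\mathcal W$ and any $v_N\in V_N$. Since $R_Nv_N=v_N$ and $R_N$ is linear,
\[
u-R_Nu=(u-v_N)-R_N(u-v_N).
\]
We have $u-v_N\in\mathcal W$, because $v_N$ is a polynomial vanishing on $\partial\Om$. The definition \eqref{eq:Sdef-infty} of $S_{N,\infty}$ therefore gives
\[
\norm{\grad(u-R_Nu)}_\infty\le(1+S_{N,\infty})\norm{\grad(u-v_N)}_\infty.
\]
Taking the infimum over $v_N\in V_N$ yields the first inequality. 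The second inequality follows from $S_{N,\infty}\le\norm{\mathbb T_N}_{\infty\to\infty}$ in \eqref{eq:gradient-projection}, together with Theorem~\ref{thm:epd-main}, since $1\lesssim\sqrt N\Lambda_N^3$.

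For the energy identity in \eqref{eq:ritz-best-two}, $\grad R_Nv=\mathbb T_N\grad v$ is the $L^2$-orthogonal projection of $\grad v$ onto $\grad V_N$. Consequently $\norm{\grad(v-R_Nv)}_2=\min_{v_N\in V_N}\norm{\grad v-\grad v_N}_2$, and this minimum is exactly $E_N^{1,2}(v)$.

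The $L^2$ bound in \eqref{eq:ritz-best-two} follows from Poincar\'e's inequality applied to $v-R_Nv\in H^1_0(\Om)$: $\norm{v-R_Nv}_2\lesssim\norm{\grad(v-R_Nv)}_2=E_N^{1,2}(v)$. This estimate does not need an Aubin--Nitsche duality gain, since only the crude bound is stated here.

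There is no real obstacle. The only point to check is that $u-v_N$ remains in $\mathcal W$, so that the supremum defining $S_{N,\infty}$ applies, and this holds because $V_N\subset\mathcal W$.
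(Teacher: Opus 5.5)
Your proposal is correct and follows essentially the same route as the paper's proof. You use idempotence of $R_N$ on $V_N\subset\mathcal W$ together with the definition of $S_{N,\infty}$ and the upper bound from Theorem~\ref{thm:epd-main} (via $S_{N,\infty}\le\norm{\mathbb T_N}_{\infty\to\infty}$), and then energy orthogonality (best approximation of the projection $\mathbb T_N$) together with Poincar\'e's inequality for \eqref{eq:ritz-best-two}.
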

	\begin{proof}
		Idempotence gives $u-R_Nu=(\mathrm{Id}-R_N)(u-v_N)$ for every $v_N\in V_N$;
		apply Theorem~\ref{thm:unconditional} and take the infimum.
		Energy orthogonality gives
		\[
		\norm{\grad(v-v_N)}_2^2
		=\norm{\grad(v-R_Nv)}_2^2
		+\norm{\grad(R_Nv-v_N)}_2^2.
		\]
		Taking $v_N=R_Nv$ proves the equality in \eqref{eq:ritz-best-two};
		Poincar\'e's inequality proves its second assertion.
	\end{proof}
	
	If $u\in\mathcal W$ is the weak solution of $-\Delta u=f$ with zero
	boundary trace, the exact Galerkin solution is $R_Nu$.  
	Thus \eqref{eq:ritz-best-infty} is also the elliptic gradient error estimate.
	The Ritz projection depends only on $V_N$ and the unweighted Dirichlet-energy inner product. Hence the same projection and approximation bounds hold for Chebyshev, Gegenbauer, and Jacobi Galerkin formulations that use the same inner product and whose boundary-adapted tensor bases span $V_N$.

	
	\section{Parabolic Approximation}\label{sec:parabolic}
	
	Fix $T>0$ and consider
	\begin{equation}\label{eq:parabolic-eq}
		\begin{cases}
			u_t-\Delta u=f&\text{in }\Om\times(0,T),\\
			u=0&\text{on }\partial\Om\times(0,T),\\
			u(0)=u_0&\text{in }\Om.
		\end{cases}
	\end{equation}
	We assume $f\in L^2(0,T;L^2(\Om))$ and that its weak solution satisfies
	\begin{equation}\label{eq:parabolic-regularity}
		u\in C([0,T];\mathcal W)\cap W^{1,\infty}(0,T;H^1_0(\Om)).
	\end{equation}
	The weak formulation of \eqref{eq:parabolic-eq} is $(u_t(t),v)+(\grad u(t),\grad v)=(f(t),v)$ for every $v\in H^1_0(\Om)$ and almost every $t\in(0,T)$.  Assumption~\eqref{eq:parabolic-regularity}
	specifies the time regularity required below; spatial analyticity alone
	does not ensure this regularity at $t=0$.
	
	For $u_{N,0}\in V_N$, let $u_N\in H^1(0,T;V_N)$ solve
	\begin{equation}\label{eq:heat-galerkin}
		(u_{N,t},v_N)+(\grad u_N,\grad v_N)=(f,v_N),
		\quad v_N\in V_N,\qquad u_N(0)=u_{N,0}.
	\end{equation}
	The solution to \eqref{eq:heat-galerkin} exists and is unique.
	All estimates in this section concern spatial semidiscretization, with no time-stepping or quadrature error.
	
	\subsection{The Discrete Heat Operator and Error Equation}
	
	\begin{lemma}\label{lem:gradient-inverse}
		For every $v_N\in V_N$,
		\begin{equation*}
			\norm{\grad v_N}_\infty\lesssim N^2\norm{\grad v_N}_2.
		\end{equation*}
	\end{lemma}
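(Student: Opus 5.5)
The plan is to reduce the two-dimensional inverse inequality to one-dimensional ones via the tensor structure. First, I would use the one-dimensional bound $\norm{v}_\infty\lesssim N\norm{v}_2$ for $v\in\Pp_N(I)$. It follows from \eqref{eq:bernstein}, or from $|\ell_n|\le\sqrt{(2n+1)/2}$, by expanding $v=\sum_{n\le N}c_n\ell_n$ and applying Cauchy--Schwarz: $\sup_x\sum_{n\le N}\ell_n(x)^2\lesssim N^2$. Tensorizing gives, for every polynomial $p\in\Pp_N(I)\otimes\Pp_N(I)$,
\[
\norm{p}_{L^\infty(\Om)}\lesssim N^2\norm{p}_{L^2(\Om)}.
\]
To see this, expand $p=\sum_{m,n\le N}c_{mn}\ell_m(x_1)\ell_n(x_2)$. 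Then
\[
|p(x)|\le\Big(\sum_{m,n}c_{mn}^2\Big)^{1/2}\Big(\sum_m\ell_m(x_1)^2\Big)^{1/2}\Big(\sum_n\ell_n(x_2)^2\Big)^{1/2}\lesssim N\cdot N\,\norm{p}_2 .
\]

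Next, for $v_N\in V_N$, the partial derivatives $\partial_1v_N$ and $\partial_2v_N$ are polynomials of degree at most $N-1$ in the differentiated variable and at most $N$ in the other. Hence each lies in $\Pp_N(I)\otimes\Pp_N(I)$. Applying the tensor bound componentwise and summing in the Euclidean norm gives
\[
\norm{\grad v_N}_\infty\lesssim N^2\bigl(\norm{\partial_1v_N}_2+\norm{\partial_2v_N}_2\bigr)\lesssim N^2\norm{\grad v_N}_2 .
\]

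There is no real obstacle here. The only point to check is that the constant in $\sup_x\sum_{n\le N}\ell_n(x)^2\lesssim N^2$ is uniform. That follows from $\sum_{n\le N}(2n+1)/2=(N+1)^2/2$ together with $|L_n|\le1$ on $I$. An alternative route goes through the basis $\{\chi_i\varphi_j\}$, but the direct Legendre expansion avoids any dependence on the mass-matrix eigenvalues.
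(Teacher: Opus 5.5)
Your proposal is correct and matches the paper's proof: expand in the tensor orthonormal Legendre basis, bound $\sum_{j\le N}\ell_j(x)^2\le (N+1)^2/2$ via $|L_j|\le 1$, apply Cauchy--Schwarz in the coefficient array to get $\norm{q}_\infty\le (N+1)^2\norm{q}_2/2$ on $\Pp_N(I)\otimes\Pp_N(I)$, and apply this to each component of $\grad v_N$. The only difference is that you combine the two components with $\norm{\partial_1v_N}_2+\norm{\partial_2v_N}_2\lesssim\norm{\grad v_N}_2$, where the paper sums the squares; this changes nothing.
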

	\begin{proof}
		For $q\in\Pp_N(I)\otimes\Pp_N(I)$, expand $q$ in the orthonormal
		basis $\ell_i(x)\ell_j(y)$, $0\le i,j\le N$.  Since
		$|L_j(x)|\le1$ on $I$,
		\begin{equation*}
			\sum_{j=0}^N|\ell_j(x)|^2\le\frac{(N+1)^2}{2}.
		\end{equation*}
		Cauchy--Schwarz in the coefficient array therefore gives
		$\norm{q}_\infty\le (N+1)^2\norm{q}_2/2$.
		Apply this bound to each component of $\grad v_N$ and sum their
		squares.  Both components belong to this tensor polynomial space.
	\end{proof}
	Let $\mathcal P_N $ be the scalar $L^2(\Om)$
	orthogonal projection onto $ V_N$. 
	This notation distinguishes it from the one-dimensional derivative-space projection $P_N$ used earlier.
	Define $A_N:V_N\to V_N$ and $\mathcal E_N(t)$ by
	\begin{equation*}
		(A_Nw_N,v_N)=(\grad w_N,\grad v_N),\qquad \forall v_N \in V_N; \qquad
		\mathcal E_N(t)=e^{-tA_N}.
	\end{equation*}
	The operator $A_N$ is positive definite and self-adjoint. 
	The tensor-product Babu\v{s}ka--Shen basis of \(V_N\) is defined by
	\[
	\{\eta_{k\ell}\}_{k,\ell=2}^N,\qquad
	\eta_{k\ell}(x,y):=\eta_k(x)\eta_\ell(y),\qquad (x,y)\in I^2.
	\]
	Thus every \(v_N\in V_N\) has a unique expansion $v_N(x,y)=\sum_{k,\ell=2}^N c_{k\ell}\eta_{k\ell}(x,y),$ with coefficient vector \(c=(c_{k\ell})_{k,\ell=2}^N\). With respect to this basis, $A_N$ and $\mathcal E_N(t)$ can be represented by matrices.


	Define $\zeta=u-R_Nu$ and $\theta=R_Nu-u_N$, so that $\zeta(0)=u_0-R_Nu_0$ and $\theta(0)=R_Nu_0-u_{N,0}$. Then
	\begin{equation}\label{eq:parabolic-splitting}
		u-u_N=\zeta+\theta.
	\end{equation}
	The map $R_N:H^1_0(\Om)\to H^1_0(\Om)$ is bounded. Hence the Bochner derivative satisfies
	$(R_Nu)_t=R_Nu_t$.
	Subtracting \eqref{eq:heat-galerkin} from the weak equation and using
	Ritz orthogonality gives
	\begin{equation}\label{eq:parabolic-error-equation}
		(\theta_t,v_N)+(\grad\theta,\grad v_N)=-(\zeta_t,v_N),
		\quad \forall v_N\in V_N.
	\end{equation}
	
	Throughout this section, operators on $V_N$ and their matrices in the tensor Babu\v{s}ka--Shen basis are denoted by the same symbols.
	
	\begin{lemma}\label{prop:parabolic-factorization}
		In the tensor Babu\v{s}ka--Shen basis, the matrices $A_N $ and $\mathcal E_N(t)$ can be written as
		\begin{equation}\label{eq:physical-heat-factorization}
			\begin{aligned}
				A_N&=M^{-1}\otimes I_{N-1}+I_{N-1}\otimes M^{-1},\\
				\mathcal E_N(t)&=e^{-tM^{-1}}\otimes e^{-tM^{-1}}.
			\end{aligned}
		\end{equation} 
		Here \(I_{N-1}\) denotes the \((N-1)\times(N-1)\) identity matrix. 
		Moreover, the error $\theta(t)$ satisfies
		\begin{equation}\label{eq:parabolic-duhamel}
			\theta(t)=\mathcal E_N(t)\theta(0)
			-\int_0^t\mathcal E_N(t-s)\mathcal P_N\zeta_t(s)\,\dd s.
		\end{equation}
	\end{lemma}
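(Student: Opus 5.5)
The plan is to compute both factorizations directly in the tensor Babu\v{s}ka--Shen basis, and then derive \eqref{eq:parabolic-duhamel} by variation of constants for the finite-dimensional linear ODE \eqref{eq:parabolic-error-equation}.

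\emph{Step 1 (the matrices).} For $w_N=\sum c_{k\ell}\eta_{k\ell}$, write the Gram matrices. By \eqref{eq:eta-der} and the definition of $M$, the mass matrix of $\{\eta_{k\ell}\}$ is $M\otimes M$, and the stiffness matrix is
\[
(\grad\eta_{k\ell},\grad\eta_{k'\ell'})
=(\eta_k',\eta_{k'}')(\eta_\ell,\eta_{\ell'})+(\eta_k,\eta_{k'})(\eta_\ell',\eta_{\ell'}')
=\delta_{kk'}M_{\ell\ell'}+M_{kk'}\delta_{\ell\ell'} .
\]
Hence the stiffness matrix is $I_{N-1}\otimes M+M\otimes I_{N-1}$. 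The defining relation $(A_Nw_N,v_N)=(\grad w_N,\grad v_N)$ says that the coefficient vector of $A_Nw_N$, multiplied by the mass matrix $M\otimes M$, equals the stiffness matrix applied to $c$. Therefore
\[
A_N=(M\otimes M)^{-1}(I\otimes M+M\otimes I)=M^{-1}\otimes I+I\otimes M^{-1}.
\]
The two summands commute, since $(M^{-1}\otimes I)(I\otimes M^{-1})=M^{-1}\otimes M^{-1}$ in either order. The exponential therefore factorizes as
\[
e^{-tA_N}=e^{-tM^{-1}\otimes I}\,e^{-tI\otimes M^{-1}}=(e^{-tM^{-1}}\otimes I)(I\otimes e^{-tM^{-1}})=e^{-tM^{-1}}\otimes e^{-tM^{-1}},
\]
using $e^{X\otimes I}=e^X\otimes I$, which follows from the power series.

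\emph{Step 2 (Duhamel).} Since $u\in W^{1,\infty}(0,T;H^1_0)$, we have $\zeta_t=(\mathrm{Id}-R_N)u_t\in L^\infty(0,T;H^1_0)\subset L^2(0,T;L^2)$. Also $\theta\in H^1(0,T;V_N)$, because $u_N\in H^1(0,T;V_N)$ and $R_Nu\in W^{1,\infty}(0,T;V_N)$. Because $(\theta_t,v_N)=(\theta_t,v_N)$ with $\theta_t\in V_N$, the error equation \eqref{eq:parabolic-error-equation} is equivalent to the $V_N$-valued ODE
\[
\theta_t+A_N\theta=-\mathcal P_N\zeta_t\quad\text{for a.e. } t,
\]
by the definitions of $A_N$ and $\mathcal P_N$. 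Then $\frac{\dd}{\dd s}\bigl[\mathcal E_N(t-s)\theta(s)\bigr]=\mathcal E_N(t-s)(\theta_s+A_N\theta)=-\mathcal E_N(t-s)\mathcal P_N\zeta_t(s)$. This identity holds a.e., and the map is absolutely continuous because the problem is finite dimensional and $\theta\in H^1$. Integrating from $0$ to $t$ gives \eqref{eq:parabolic-duhamel}.

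The only point requiring care is the regularity justification in Step~2. In particular, the identity $(R_Nu)_t=R_Nu_t$ is already noted in the text, so I expect no real obstacle. The main bookkeeping risk is the ordering convention of the Kronecker product versus the coefficient indexing, but the final formula is symmetric, so this does not affect the result.
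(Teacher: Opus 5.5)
Your proposal is correct and follows the paper's proof essentially step for step: the tensor mass and stiffness matrices $M\otimes M$ and $I_{N-1}\otimes M+M\otimes I_{N-1}$ give $A_N=(M\otimes M)^{-1}(I_{N-1}\otimes M+M\otimes I_{N-1})$; the commuting Kronecker summands factorize the exponential; and variation of constants for $\theta_t+A_N\theta=-\mathcal P_N\zeta_t$ yields the Duhamel formula. Your regularity remarks in Step~2 only make explicit what the paper leaves implicit, and the tautology $(\theta_t,v_N)=(\theta_t,v_N)$ should presumably read $(\mathcal P_N\zeta_t,v_N)=(\zeta_t,v_N)$ for $v_N\in V_N$.
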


	\begin{proof}
		In the tensor basis \(\{\eta_i(x)\eta_j(y):2\le i,j\le N\}\), separation of the integrals in the two variables gives the mass matrix \(M\otimes M\) and the stiffness matrix \(I_{N-1}\otimes M+M\otimes I_{N-1}\). The definition of $A_N$ gives $(M\otimes M)\,A_N =(I_{N-1}\otimes M+M\otimes I_{N-1}) $.  Therefore we have
		\[
		\begin{aligned}
			A_N
			&=(M\otimes M)^{-1}\bigl(I_{N-1}\otimes M+M\otimes I_{N-1}\bigr)\\
			&=M^{-1}\otimes I_{N-1}+I_{N-1}\otimes M^{-1}.
		\end{aligned}
		\]
		Since $M^{-1}\otimes I_{N-1}$ and $I_{N-1}\otimes M^{-1}$ commute,
		\[
		\begin{aligned}
			\mathcal E_N(t)
			&=e^{-tA_N}=\bigl(e^{-tM^{-1}}\otimes I_{N-1}\bigr)\bigl(I_{N-1}\otimes e^{-tM^{-1}}\bigr)\\
			&=e^{-tM^{-1}}\otimes e^{-tM^{-1}}.
		\end{aligned}
		\]
		Using the definition of \(A_N\) and \eqref{eq:parabolic-error-equation}, we obtain
		\[
		\theta_t+A_N\theta=-\mathcal P_N\zeta_t,\qquad \theta(0)=R_Nu_0-u_{N,0}.
		\]
		Variation of constants gives
		\[
		\theta(t)=\mathcal E_N(t)\theta(0)-\int_0^t\mathcal E_N(t-s)\mathcal P_N\zeta_t(s)\,\mathrm ds.
		\]
		This completes the proof. 
	\end{proof}
	
	Lemma \ref{lem:mass-spectrum} gives the range of the eigenvalues of the mass matrix \(M\). Together with \eqref{eq:physical-heat-factorization}, we obtain  \(\sigma(A_N)\subset[c,CN^4]\).
	
	
	\subsection{A Negative-Norm Estimate for the Ritz Projection Error}
	
	\begin{lemma}\label{lem:dual-approximation}
		For $w\in H^3(\Om)\cap H^1_0(\Om)$,
		\begin{equation}\label{eq:dual-approximation}
			E_N^{1,2}(w)\lesssim N^{-2}\norm{w}_{H^3(\Om)}.
		\end{equation}
	\end{lemma}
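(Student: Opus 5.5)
The plan is to build an explicit competitor $v_N\in V_N$ from one-dimensional energy projections. Then we bound $\norm{\grad(w-v_N)}_2$ by one-dimensional Legendre coefficient tails.

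The natural choice is $v_N=(\Pi_N\otimes\Pi_N)w$, with the one-dimensional energy projection $\Pi_N$ applied in each variable. Write $\Pi^x=\Pi_N\otimes\mathrm{Id}$ and $\Pi^y=\mathrm{Id}\otimes\Pi_N$. We decompose
\[
w-\Pi^x\Pi^y w=(w-\Pi^x w)+\Pi^x(w-\Pi^y w).
\]
For the first term, $\partial_x(w-\Pi^xw)=(\mathrm{Id}-P_N)\partial_xw$ in the $x$ variable, by $(\Pi_Nf)'=P_Nf'$. Its $L^2$ norm is the Legendre tail of $\partial_xw(\cdot,y)$ beyond degree $N-1$. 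The classical weighted estimate gives
\[
\norm{(\mathrm{Id}-P_N)g}_2\lesssim N^{-2}\norm{(1-x^2)g''}_2\le N^{-2}\norm{g''}_2 .
\]
This follows from the Sturm--Liouville identity $\bigl((1-x^2)\ell_n'\bigr)'=-n(n+1)\ell_n$, applied twice after integrating by parts. The constant mode is removed because $\partial_xw(\cdot,y)$ has zero mean, since $w$ vanishes at $x=\pm1$. Applying this with $g=\partial_xw(\cdot,y)$ and integrating in $y$ gives $N^{-2}\norm{\partial_x^3w}_2$.

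For $\partial_y(w-\Pi^xw)=(\mathrm{Id}-\Pi_N)$ applied in $x$ to $\partial_yw$, we use the $L^2$ bound for $\mathrm{Id}-\Pi_N=J(\mathrm{Id}-P_N)\partial_x$. On $\operatorname{ran}(\mathrm{Id}-P_N)$ the operator $J$ gains $N^{-1}$. Indeed, $J\ell_n=-\eta_{n+1}$ and the mass matrix entries \eqref{eq:Mdiag}--\eqref{eq:Moff} are $O(n^{-2})$ in a banded pattern, so $\norm{J(\mathrm{Id}-P_N)}_{2\to2}\lesssim N^{-1}$. Combined with the tail bound $N^{-1}\norm{\partial_x^2\partial_yw}_2$ (one derivative, weight $(1-x^2)^{1/2}$), this gives $N^{-2}\norm{w}_{H^3}$.

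The second term is handled symmetrically. We need $\Pi^x$ to be stable in $L^2$ for $\partial_y$ and stable in $H^1$ in $x$ for $\partial_x$. The $H^1$ stability is contraction of the energy projection. $L^2$ stability of $\Pi_N$ on $H^1_0$ follows from the same $J(\mathrm{Id}-P_N)$ bound, since $\norm{f-\Pi_Nf}_2\lesssim N^{-1}\norm{f'}_2$.

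The main obstacle is the mixed derivative: whether the $x$-projection error in $\partial_y$ needs more than three derivatives overall. I would first try splitting the gain as $N^{-1}$ from $J$ plus $N^{-1}$ from a one-derivative Legendre tail, which uses exactly $\partial_x^2\partial_y w\in L^2$. If the commutation between $\partial_y$ and $\Pi^x$, and the zero traces of $\partial_yw$ on $x=\pm1$, cause trouble, I would fall back to the Aubin--Nitsche-type identity $\norm{f-\Pi_Nf}_2\lesssim N^{-1}\norm{(f-\Pi_Nf)'}_2$.
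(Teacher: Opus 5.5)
Your proposal is correct and follows the paper's proof. You use the same competitor $\Pi_N^x\Pi_N^yw$ and the same one-dimensional ingredients: $(\Pi_Nf)'=P_Nf'$ and $f-\Pi_Nf=J(\mathrm{Id}-P_N)f'$ on $H^1_0(I)$, the $N^{-1}$ gain of $J$ on modes $n\ge N$, the one- and two-derivative Legendre tail bounds, and the vanishing of tangential derivatives of $w$ on the edges.

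The one step to tighten is the term $\partial_y\Pi^x(w-\Pi^yw)=\Pi^x(\mathrm{Id}-P_N^y)\partial_yw$. $\Pi_N$ is not uniformly $L^2$-stable: a unit-$L^2$ bump of width $N^{-2}$ at $x=1$ already gives $\norm{\Pi_Nf}_2\gtrsim N^{1/2}$. What your argument actually yields is $\norm{\Pi_Nf}_2\le\norm{f}_2+CN^{-1}\norm{f'}_2$. You must therefore still show $N^{-1}\norm{(\mathrm{Id}-P_N^y)\partial_x\partial_yw}_2\lesssim N^{-2}\norm{\partial_x\partial_y^2w}_2$, using the one-derivative tail in $y$; this is valid because $\partial_xw=0$ on $y=\pm1$.

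The paper avoids this step. For the $y$-derivative it uses $\Pi^x\Pi^y=\Pi^y\Pi^x$ and the reversed splitting $(w-\Pi^yw)+\Pi^y(w-\Pi^xw)$, so the outer operator is always the $L^2$-contraction $P_N$.
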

	\begin{proof}
		Recall the one-dimensional energy projection $\Pi_N$ from Section~4.2.
		For $f\in H^1_0(I)$, we have $\Pi_Nf=JP_Nf'$.
		For the normalized Legendre polynomials $\ell_n$, the operator
		$\mathcal D h=-((1-x^2)h')'$ has eigenvalues $n(n+1)$.
		Weighted Bessel's inequality and Parseval's identity give, respectively,
		\begin{equation}\label{eq:dual-legendre-tails}
			\begin{aligned}
				\norm{(\mathrm{Id}-P_N)h}_2&\lesssim N^{-1}\norm{h'}_2,
				&&h\in H^1(I),\ \textstyle\int_Ih=0,\\
				\norm{(\mathrm{Id}-P_N)h}_2&\lesssim N^{-2}\norm{h}_{H^2(I)},
				&&h\in H^2(I),\ \textstyle\int_Ih=0.
			\end{aligned}
		\end{equation}
		Indeed, the first estimate uses
		$\sum_{n\ge1}n(n+1)|(h,\ell_n)|^2
		\le\int_I(1-x^2)|h'|^2$.
		For the second, integration by parts gives
		$(\mathcal D h,\ell_n)=n(n+1)(h,\ell_n)$ and
		$\norm{\mathcal D h}_2\lesssim \norm{h}_{H^2}$.
		The vanishing factor $1-x^2$ eliminates endpoint terms.
		
		Using $(2n+1)\sqrt{2/(2n+1)}\,\ell_n=\sqrt{2/(2n+3)}\,\ell_{n+1}'-\sqrt{2/(2n-1)}\,\ell_{n-1}'$, we obtain
		\begin{equation*}
			J\ell_n=
			\frac{\ell_{n+1}}{\sqrt{(2n+1)(2n+3)}}
			-\frac{\ell_{n-1}}{\sqrt{(2n+1)(2n-1)}},\qquad n\ge1.
		\end{equation*}
		Moreover, $\norm{Jh}_2\lesssim N^{-1}\norm{h}_2$ whenever $h$ contains
		only modes $n\ge N$.  Each output coefficient is a linear combination of at most two input coefficients.  Applying this and \eqref{eq:dual-legendre-tails} to $f'$ yields
		\begin{equation}\label{eq:dual-one-dimensional}
			\begin{aligned}
				\norm{f-\Pi_Nf}_2&\lesssim N^{-2}\norm{f}_{H^2(I)},\qquad
				\norm{(\Pi_Nf)'}_2\le\norm{f'}_2,\\
				\norm{(f-\Pi_Nf)'}_2&\lesssim N^{-2}\norm{f}_{H^3(I)}.
			\end{aligned}
		\end{equation}
		Let $P_N^x$ and $\Pi_N^x$ denote the action of $P_N$ and $\Pi_N$ in the $x$ variable, and define $P_N^y$ and $\Pi_N^y$ analogously. 
		Taking $\widetilde S_Nw=\Pi_N^x\Pi_N^yw\in V_N$, commutation of operators acting in different variables gives
		\begin{equation*}
		\partial_x(w-\widetilde S_Nw)
		=(\mathrm{Id}-P_N^x)\partial_xw+P_N^x(\mathrm{Id}-\Pi_N^y)\partial_xw.
		\end{equation*}
		The first term is bounded by the second estimate in \eqref{eq:dual-legendre-tails}. For the second term, apply the \(L^2\) estimate in \eqref{eq:dual-one-dimensional} in the \(y\) variable and use the \(L^2\) contractivity of \(P_N^x\). These require at most the derivatives $\partial_x^3w$ and $\partial_x\partial_y^2w$, together with lower derivatives. 
		On the edges $y=\pm1$, differentiation in $x$ is tangential, so the zero trace of $w$ implies that $\partial_x w$ also has zero trace there.  The same argument with
		the variables reversed controls the $y$ derivative.  This proves
		\eqref{eq:dual-approximation}.
	\end{proof}
	We define the negative Sobolev norm as the dual norm on \(H^{-1}(\Om)\) with respect to \(H^1_0(\Om)\), namely
	\begin{equation*}
		\norm{g}_{H^{-1}}=
		\sup_{0\ne v\in H^1_0(\Om)}\frac{|\langle g,v\rangle|}{\norm{\grad v}_2}.
	\end{equation*}
	\begin{lemma}\label{lem:ritz-negative}
		For every $v\in H^1_0(\Om)$,
		\begin{equation}\label{eq:ritz-negative}
			\norm{(\mathrm{Id}-R_N)v}_{H^{-1}(\Om)}
			\lesssim N^{-2}E_N^{1,2}(v).
		\end{equation}
	\end{lemma}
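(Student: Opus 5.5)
This is a standard Aubin--Nitsche duality argument, with Lemma~\ref{lem:dual-approximation} supplying the $N^{-2}$ gain. We fix $v\in H^1_0(\Om)$ and set $e=(\mathrm{Id}-R_N)v$. For an arbitrary test function $\phi\in H^1_0(\Om)$ we must bound $(e,\phi)$ by $N^{-2}E_N^{1,2}(v)\norm{\grad\phi}_2$.

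We introduce the dual problem: let $w\in H^1_0(\Om)$ solve $-\Delta w=\phi$, so that $(e,\phi)=(\grad e,\grad w)$. Galerkin orthogonality \eqref{eq:variational-equation} lets us subtract any $w_N\in V_N$, giving
\[
|(e,\phi)|=|(\grad e,\grad(w-w_N))|\le\norm{\grad e}_2\,E_N^{1,2}(w).
\]
By Corollary~\ref{cor:ritz-approximation}, $\norm{\grad e}_2=E_N^{1,2}(v)$. Lemma~\ref{lem:dual-approximation} would then give $E_N^{1,2}(w)\lesssim N^{-2}\norm{w}_{H^3(\Om)}$.

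It remains to show $\norm{w}_{H^3(\Om)}\lesssim\norm{\phi}_{H^1(\Om)}\lesssim\norm{\grad\phi}_2$ (the second inequality is Poincar\'e). This shift estimate is the main obstacle, because $\Om$ is a square and $H^3$ regularity at the corners is not automatic.

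The plan is to exploit that $\phi\in H^1_0(\Om)$ vanishes on $\partial\Om$. We extend $\phi$ by odd reflection across each side, first to $(-3,3)^2$ and then periodically with period $4$ in each variable. The zero trace means the reflected function still lies in $H^1_{\mathrm{loc}}$, so the extension belongs to $H^1$ of the torus $(\R/4\mathbb Z)^2$. We then solve $-\Delta W=\phi_{\mathrm{odd}}$ on this torus. The right-hand side is odd and therefore has zero mean, so $W$ is well defined. By uniqueness $W$ is odd under each reflection, so it vanishes on the edges and $W|_\Om=w$.

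Equivalently, in the sine basis $\sin(j\pi(x+1)/2)\sin(k\pi(y+1)/2)$, the coefficients satisfy
\[
\hat w_{jk}=\frac{\hat\phi_{jk}}{(\pi/2)^2(j^2+k^2)}.
\]
Fourier multiplier bounds on the torus then give $\norm{W}_{H^3}\lesssim\norm{\phi_{\mathrm{odd}}}_{H^1}\lesssim\norm{\phi}_{H^1(\Om)}$. We only need to check that the $H^1$ norm of the odd extension is controlled by $\norm{\phi}_{H^1(\Om)}$, which holds precisely because of the zero trace.

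Combining the pieces gives
\[
|(e,\phi)|\lesssim N^{-2}E_N^{1,2}(v)\norm{\grad\phi}_2.
\]
Taking the supremum over $\phi$ proves \eqref{eq:ritz-negative}.
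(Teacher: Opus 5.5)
Your proposal is correct and follows essentially the same route as the paper. Both are an Aubin--Nitsche duality argument that combines Ritz orthogonality, the identity $\norm{\grad(\mathrm{Id}-R_N)v}_2=E_N^{1,2}(v)$, and the $N^{-2}$ estimate of Lemma~\ref{lem:dual-approximation} applied to the dual solution. Your odd-reflection/torus justification of the shift estimate $\norm{w}_{H^3(\Om)}\lesssim\norm{\grad\phi}_2$ is a more explicit version of the paper's sine-series computation $\norm{\phi}_{H^3}^2\lesssim\sum_{j,k}\lambda_{jk}^3|\phi_{jk}|^2\lesssim\sum_{j,k}\lambda_{jk}|g_{jk}|^2=\norm{\grad g}_2^2$.
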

	\begin{proof}
		Let $\zeta=(\mathrm{Id}-R_N)v$. To estimate its $H^{-1}(\Om)$ norm, fix $g\in H_0^1(\Om)$ and let $\phi\in H_0^1(\Om)$ solve
		\[
		\begin{aligned}
			-\Delta\phi&=g\quad\text{in }\Om,\\
			\qquad \phi&=0\quad\text{on }\partial\Om.
		\end{aligned}
		\]
		We first verify the regularity needed to apply Lemma~\ref{lem:dual-approximation}. The orthonormal Dirichlet eigenfunctions on $\Om=(-1,1)^2$ and their eigenvalues are
		\[
		e_{jk}(x,y)=\sin\frac{j\pi(x+1)}2\sin\frac{k\pi(y+1)}2,\quad
		\lambda_{jk}=\frac{\pi^2(j^2+k^2)}4,\quad j,k\ge1.
		\]
		Writing $\phi_{jk}=(\phi,e_{jk})$ and $g_{jk}=(g,e_{jk})$, the weak equation gives $\lambda_{jk}\phi_{jk}=g_{jk}$. Orthogonality of the differentiated sine and cosine functions, together with $g\in H_0^1(\Om)$, yields
		\[
		\norm{\phi}_{H^3(\Om)}^2
		\lesssim \sum_{j,k\ge1}\lambda_{jk}^3|\phi_{jk}|^2
		\lesssim\sum_{j,k\ge1}\lambda_{jk}|g_{jk}|^2
		=\norm{\grad g}_2^2.
		\]
		Let $\widetilde S_N\phi\in V_N$ be the approximant constructed in Lemma~\ref{lem:dual-approximation}. The preceding estimate gives
		\[
		\norm{\grad(\phi-\widetilde S_N\phi)}_2
		\lesssim N^{-2}\norm{\phi}_{H^3(\Om)}
		\lesssim N^{-2}\norm{\grad g}_2.
		\]
		The weak equation and Ritz orthogonality imply
		\[
		(\zeta,g)=(\grad\zeta,\grad\phi),\qquad
		(\grad\zeta,\grad \widetilde S_N\phi)=0.
		\]
		Subtracting the second identity from the first and applying the approximation estimate, we obtain
		\[
		|(\zeta,g)|
		=|(\grad\zeta,\grad(\phi-\widetilde S_N\phi))|
		\le \norm{\grad\zeta}_2\norm{\grad(\phi-\widetilde S_N\phi)}_2
		\lesssim N^{-2}\norm{\grad\zeta}_2\norm{\grad g}_2.
		\]
		Taking the supremum over nonzero $g\in H_0^1(\Om)$ and using the energy best-approximation property of $R_N$, we conclude that
		\[
		\norm{\zeta}_{H^{-1}(\Om)}
		=\sup_{0\ne g\in H_0^1(\Om)}\frac{|(\zeta,g)|}{\norm{\grad g}_2}
		\lesssim N^{-2}\norm{\grad\zeta}_2
		=N^{-2}E_N^{1,2}(v).
		\]
		This completes the proof.
	\end{proof}
	
	\subsection{A Hilbert-Space Convolution Bound}
	\begin{lemma}\label{lem:hilbert-convolution}
		Let $A$ be a positive self-adjoint operator on a finite-dimensional Hilbert space $H$, with spectrum in $[a,b]$, $0<a\le b$. Denote by $\|\cdot\|_H$ the norm on $H$. For $F\in L^\infty(0,T;H)$,
		\begin{equation}\label{eq:hilbert-convolution}
			\sup_{0\le t\le T}
			\left\|\int_0^t Ae^{-(t-s)A}F(s)\,\dd s\right\|_H
			\lesssim \sqrt{1+\log(b/a)}\norm{F}_{L^\infty(0,T;H)}.
		\end{equation}
	\end{lemma}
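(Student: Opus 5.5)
The plan is a dyadic decomposition of the spectrum of $A$, followed by a uniform kernel bound on each dyadic block and orthogonal recombination; the square root of the logarithm comes from summing squared norms over the blocks. First I split $[a,b]$ into dyadic pieces: set $J:=\lfloor\log_2(b/a)\rfloor$, so that $J+1\lesssim 1+\log(b/a)$. For $0\le j\le J$ let $E_j$ be the spectral projection of $A$ onto the eigenvalues in $[2^ja,2^{j+1}a)$, with the top point $b$ included in the last interval. Since $H$ is finite dimensional and $A$ is self-adjoint, the $E_j$ are mutually orthogonal, commute with $A$ and with $e^{-\tau A}$, and satisfy $\sum_jE_j=\mathrm{Id}$.

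Next, fix $t\in[0,T]$ and set $u_j(t):=\int_0^tAe^{-(t-s)A}E_jF(s)\,\dd s$. Each $u_j(t)$ lies in $\operatorname{Ran}E_j$. By the spectral theorem,
\[
\norm{Ae^{-\tau A}E_j}_{H\to H}\le \sup_{\lambda\in[2^ja,2^{j+1}a]}\lambda e^{-\tau\lambda}\le 2^{j+1}a\,e^{-\tau 2^ja},\qquad \tau\ge0.
\]
Integrating this bound in $\tau=t-s$ gives
\[
\norm{u_j(t)}_H\le\int_0^t2^{j+1}a\,e^{-(t-s)2^ja}\,\dd s\;\sup_{0<s<t}\norm{E_jF(s)}_H\le 2\,\esssup_{0<s<T}\norm{E_jF(s)}_H .
\]
This bound is uniform in $j$, because the scale $2^ja$ cancels exactly between the factor $\lambda$ and the time integral.

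Finally, I recombine using orthogonality of the ranges:
\[
\Bigl\|\sum_{j=0}^Ju_j(t)\Bigr\|_H^2=\sum_{j=0}^J\norm{u_j(t)}_H^2\le4\sum_{j=0}^J\esssup_s\norm{E_jF(s)}_H^2\le4(J+1)\norm{F}_{L^\infty(0,T;H)}^2 ,
\]
where the last step uses $\norm{E_jF(s)}_H\le\norm{F(s)}_H$. Taking square roots and the supremum over $t$ gives \eqref{eq:hilbert-convolution}.

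The main obstacle is this last step. The suprema in $s$ are taken separately for each $j$, so the orthogonality $\sum_j\norm{E_jF(s)}^2=\norm{F(s)}^2$ cannot be used at a single time. This is exactly why a factor $\sqrt{J+1}$ appears rather than a constant. It is acceptable here, since the statement allows $\sqrt{1+\log(b/a)}$.
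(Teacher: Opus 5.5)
Your argument is correct, and it takes a genuinely different route from the paper.

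The paper works on the dual side. It pairs with a unit vector $h$ and uses self-adjointness to reduce everything to the single scalar integral $\int_0^\infty\norm{Ae^{-rA}h}_H\,\dd r$. It then splits time at $b^{-1}$ and $a^{-1}$ and bounds the two outer pieces by $1$ and $e^{-1}$. The middle window is handled by Cauchy--Schwarz with the weight $\dd r/r$, whose mass on $[b^{-1},a^{-1}]$ is exactly $\log(b/a)$, combined with the exact identity $\int_0^\infty r\norm{Ae^{-rA}h}_H^2\,\dd r=\tfrac14\norm{h}_H^2$.

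You instead decompose on the spectral side. The scale invariance of $\int_0^\infty\lambda e^{-\tau\lambda}\,\dd\tau=1$ makes each dyadic block uniformly bounded by $2$. Orthogonality of the blocks then turns the count $J+1\approx\log_2(b/a)$ into a square-root loss. The two proofs are discrete and continuous versions of the same square-function mechanism: your number of dyadic blocks plays the role of the logarithmic length of the intermediate time window.

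The paper's version needs no decomposition of $H$ and gives a slightly better explicit constant, $1+e^{-1}+\tfrac12\sqrt{\log(b/a)}$ versus your $2\sqrt{1+\log_2(b/a)}$. Yours shows exactly where the logarithm comes from: only from taking the essential supremum in $s$ separately on each block. It therefore shows at once that there is no logarithmic loss when $F$ is spectrally supported in boundedly many dyadic blocks. The same holds when $F$ is constant in time, since then orthogonality can be used at a single time.

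Two cosmetic points. The $\sup_{0<s<t}$ in your block estimate should be an essential supremum. Including $b$ in the last interval is automatic, since $2^{J+1}a>b$.
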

	\begin{proof}
		Fix $t\in[0,T]$ and $h\in H$ with $\|h\|_H=1$. Self-adjointness and the change of variables $r=t-s$ give
		\[
		\begin{aligned}
			\left|\left\langle\int_0^t Ae^{-(t-s)A}F(s)\,\dd s,h\right\rangle_H\right|
			&=\left|\int_0^t\langle F(s),Ae^{-(t-s)A}h\rangle_H\,\dd s\right|\\
			&\le \|F\|_{L^\infty(0,T;H)}\int_0^\infty\|Ae^{-rA}h\|_H\,\dd r.
		\end{aligned}
		\]
		To estimate the last integral, write 
		\[
		\int_0^\infty\|Ae^{-rA}h\|_H\,\dd r=\left(\int_0^{b^{-1}} + \int_{b^{-1}}^{a^{-1}}+ \int_{a^{-1}}^\infty \right) \|Ae^{-rA}h\|_H\,\dd r=
		I_1+I_2+I_3.
		\]
		The spectral theorem gives $\|Ae^{-rA}h\|_H\le\max_{\lambda\in[a,b]}\lambda e^{-r\lambda}$. For $r\ge a^{-1}$, the function $\lambda\mapsto\lambda e^{-r\lambda}$ is decreasing on $[a,b]$. Hence
		\[
		I_1\le\int_0^{b^{-1}}b\,\dd r=1,\qquad
		I_3\le\int_{a^{-1}}^\infty ae^{-ar}\,\dd r=e^{-1}.
		\]
		For $I_2$, weighted Cauchy--Schwarz gives
		\begin{align*}
				I_2&\le
		\left(\int_{b^{-1}}^{a^{-1}}\frac{\dd r}{r}\right)^{1/2}
		\left(\int_0^\infty r\|Ae^{-rA}h\|_H^2\,\dd r\right)^{1/2}\\
		&=\sqrt{\log(b/a)}
		\left(\int_0^\infty r\|Ae^{-rA}h\|_H^2\,\dd r\right)^{1/2}.
		\end{align*}
		To evaluate the second factor, choose an orthonormal eigenbasis $\{e_j\}_{j=1}^{\dim H}$ of $A$, with $Ae_j=\lambda_je_j$ and $\lambda_j\in[a,b]$. Then
		\[
		\int_0^\infty r\|Ae^{-rA}h\|_H^2\,\dd r
		=\sum_{j=1}^{\dim H}|\langle h,e_j\rangle_H|^2
		\int_0^\infty r\lambda_j^2e^{-2r\lambda_j}\,\dd r
		=\frac14\|h\|_H^2=\frac14.
		\]
		Combining the three estimates yields
		\[
		I_1+I_2+I_3\le 1+e^{-1}+\frac12\sqrt{\log(b/a)}
		\lesssim \sqrt{1+\log(b/a)}.
		\]
		Substituting this bound into the duality estimate and taking the supremum over $\|h\|_H=1$ and $t\in[0,T]$ proves \eqref{eq:hilbert-convolution}, completing the proof.
	\end{proof}
	
	\subsection{The Algebraically Sharp Error Estimate}	
	\begin{theorem}
		\label{thm:parabolic-error}
		Let $u\in C([0,T];W)\cap W^{1,\infty}(0,T;H_0^1(\Omega))$
		be a weak solution of the parabolic problem. Let $u_N$ solve
		\eqref{eq:heat-galerkin} with the Ritz initial value
		$u_{N,0}=R_Nu_0=R_Nu(0)$. Then
		\begin{equation}\label{eq:parabolic-main}
			\begin{aligned}
				\sup_{0\le t\le T}\norm{u(t)-u_N(t)}_{W^{1,\infty}}
				&\lesssim \sqrt N \Lambda_N^3\sup_{0\le t\le T}E_N^{1,\infty}(u(t))\\
				&\quad+\sqrt{\Lambda_N}\,
				\esssup_{0<t<T}E_N^{1,2}(u_t(t)).
			\end{aligned}
		\end{equation}
	\end{theorem}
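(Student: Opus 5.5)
Split the error as in \eqref{eq:parabolic-splitting}, $u-u_N=\zeta+\theta$ with $\zeta=u-R_Nu$ and $\theta=R_Nu-u_N$, and bound the two pieces separately in $W^{1,\infty}$. By Poincar\'e on $\mathcal W$ it suffices to control the gradients. For $\zeta$, Corollary~\ref{cor:ritz-approximation} applied at each fixed $t$ gives
\[
\norm{\grad\zeta(t)}_\infty\lesssim\sqrt N\Lambda_N^3E_N^{1,\infty}(u(t)),
\]
which is the first term of \eqref{eq:parabolic-main}. Since $u_{N,0}=R_Nu_0$, we have $\theta(0)=0$, and Lemma~\ref{prop:parabolic-factorization} reduces $\theta$ to
\[
\theta(t)=-\int_0^t\mathcal E_N(t-s)\mathcal P_N\zeta_t(s)\,\dd s.
\]

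For $\theta$, apply the inverse estimate, Lemma~\ref{lem:gradient-inverse}, to get $\norm{\grad\theta(t)}_\infty\lesssim N^2\norm{\grad\theta(t)}_2=N^2\norm{A_N^{1/2}\theta(t)}_{L^2}$. Then rewrite the integrand by inserting $A_N$:
\[
A_N^{1/2}\mathcal E_N(t-s)\mathcal P_N\zeta_t=A_N\mathcal E_N(t-s)\,A_N^{-1/2}\mathcal P_N\zeta_t.
\]
Now apply Lemma~\ref{lem:hilbert-convolution} on $H=V_N$ with the $L^2$ norm and $A=A_N$. Its spectrum lies in $[c,CN^4]$ (noted after Lemma~\ref{prop:parabolic-factorization}), so $\sqrt{1+\log(b/a)}\lesssim\sqrt{\Lambda_N}$, and
\[
\norm{\grad\theta(t)}_\infty\lesssim N^2\sqrt{\Lambda_N}\,\esssup_s\norm{A_N^{-1/2}\mathcal P_N\zeta_t(s)}_{L^2}.
\]
The map $s\mapsto A_N^{-1/2}\mathcal P_N\zeta_t(s)$ lies in $L^\infty(0,T;V_N)$ because $u\in W^{1,\infty}(0,T;H^1_0)$ and $(R_Nu)_t=R_Nu_t$.

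The key step is to show $\norm{A_N^{-1/2}\mathcal P_Ng}_{L^2}\le\norm{g}_{H^{-1}}$ for $g\in L^2$. By duality, $\norm{A_N^{-1/2}\mathcal P_Ng}_2=\sup_{v_N}(g,A_N^{-1/2}v_N)/\norm{v_N}_2$. Setting $w_N=A_N^{-1/2}v_N$ gives $\norm{\grad w_N}_2=\norm{A_N^{1/2}w_N}_2=\norm{v_N}_2$, so the supremum is at most $\norm g_{H^{-1}}$. Apply this with $g=\zeta_t(s)=(\mathrm{Id}-R_N)u_t(s)$. Lemma~\ref{lem:ritz-negative} then gives $\norm{\zeta_t}_{H^{-1}}\lesssim N^{-2}E_N^{1,2}(u_t(s))$. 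The $N^{-2}$ cancels the $N^2$ from the inverse estimate, yielding the second term $\sqrt{\Lambda_N}\esssup E_N^{1,2}(u_t)$.

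The main obstacle I expect is this cancellation bookkeeping: the inverse inequality must be paired with $A_N^{1/2}$, not an $L^2$ norm, so the exponents match exactly. A secondary point is justifying the a.e.\ time measurability and the identity $\zeta_t=(\mathrm{Id}-R_N)u_t$. Finally, add the two bounds and take the supremum over $t$.
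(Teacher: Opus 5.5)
Your proposal is correct and follows the paper's proof essentially step for step. You use the same splitting $u-u_N=\zeta+\theta$, the Ritz bound for $\zeta$, the rewriting $A_N^{1/2}\theta(t)=-\int_0^tA_N\mathcal E_N(t-s)A_N^{-1/2}\mathcal P_N\zeta_t(s)\,\dd s$, the duality bound $\norm{A_N^{-1/2}\mathcal P_N\zeta_t}_2\le\norm{\zeta_t}_{H^{-1}}$, Lemma~\ref{lem:ritz-negative}, Lemma~\ref{lem:hilbert-convolution} with spectrum in $[c,CN^4]$, and the inverse inequality of Lemma~\ref{lem:gradient-inverse}; the only difference is cosmetic, in that you apply the inverse inequality first rather than last.
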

	\begin{proof}
		Recall that $u-u_N=\zeta+\theta$, where $\zeta=u-R_Nu$ and $\theta=R_Nu-u_N$. The elliptic stability estimate controls $\zeta$. To estimate $\theta$, we combine the negative-norm estimate, the Hilbert-space convolution bound, and the polynomial inverse inequality.
		By \eqref{eq:ritz-best-infty},
		\[
		\sup_{0\le t\le T}\norm{\grad\zeta(t)}_\infty
		\lesssim \sqrt{N}\,\Lambda_N^3\sup_{0\le t\le T}E_N^{1,\infty}(u(t)).
		\]
		To combine the negative-norm estimate with the convolution bound, define
		\[
		G_N(t):=A_N^{-1/2}\mathcal P_N\zeta_t(t)\in V_N.
		\]
		Its $L^2$ norm represents the discrete negative norm of the projected error derivative $\mathcal P_N\zeta_t(t)$.
		
		Since $A_N$ is self-adjoint and $\mathcal P_N$ is the $L^2(\Om)$-orthogonal projection onto $V_N$, we have
		\begin{equation*}
			(G_N(t),A_N^{1/2}v_N) =(A_N^{-1/2}\mathcal P_N\zeta_t(t) ,A_N^{1/2}v_N)	=(\zeta_t(t),v_N). 
		\end{equation*}
		Since $A_N^{1/2}$ maps $V_N$ bijectively onto itself, $L^2$ duality and Lemma~\ref{lem:ritz-negative} yield
		\begin{equation}\label{eq:discrete-negative-residual}
			\begin{aligned}
				\norm{G_N(t)}_2
				&=\sup_{0\ne v_N\in V_N}\frac{|(\zeta_t(t),v_N)|}{\norm{\grad v_N}_2}\\
				&\le \norm{\zeta_t(t)}_{H^{-1}(\Omega)}
				\lesssim N^{-2}E_N^{1,2}(u_t(t)).
			\end{aligned}
		\end{equation}		
		Since $\theta(0)=0$ and spectral functions of $A_N$ commute, \eqref{eq:parabolic-duhamel} gives
		\[
		\theta(t)=-\int_0^t\mathcal E_N(t-s)\mathcal P_N\zeta_t(s)\,\dd s,\quad
		A_N^{1/2}\theta(t)=-\int_0^t A_N\mathcal E_N(t-s)G_N(s)\,\dd s.
		\]
		The spectral bound of $ A_N$ implies
		\[
		1+\log\frac{\lambda_{\max}(A_N)}{\lambda_{\min}(A_N)}
		\le 1+\log(CN^4/c)\lesssim \Lambda_N.
		\]
		Applying Lemma~\ref{lem:hilbert-convolution} and then \eqref{eq:discrete-negative-residual}, we obtain
		\[
		\begin{aligned}
			\sup_{0\le t\le T}\norm{\grad\theta(t)}_2
			&\lesssim \sqrt{\Lambda_N}\operatorname*{ess\,sup}_{0<t<T}\norm{G_N(t)}_2\\
			&\lesssim N^{-2}\sqrt{\Lambda_N}\operatorname*{ess\,sup}_{0<t<T}E_N^{1,2}(u_t(t)).
		\end{aligned}
		\]
		Lemma~\ref{lem:gradient-inverse} then yields
		\[
		\sup_{0\le t\le T}\norm{\grad\theta(t)}_\infty
		\lesssim \sqrt{\Lambda_N}\operatorname*{ess\,sup}_{0<t<T}E_N^{1,2}(u_t(t)).
		\]
		Since $\zeta$ and $\theta$ have zero trace, Poincar\'e's inequality gives
		\[
		\begin{aligned}
			&\sup_{0\le t\le T}\norm{u(t)-u_N(t)}_{W^{1,\infty}(\Omega)}\\
			&\lesssim \sup_{0\le t\le T}\bigl(\norm{\grad\zeta(t)}_\infty+\norm{\grad\theta(t)}_\infty\bigr)\\
			&\lesssim \sqrt{N}\,\Lambda_N^3\sup_{0\le t\le T}E_N^{1,\infty}(u(t))
			+\sqrt{\Lambda_N}\operatorname*{ess\,sup}_{0<t<T}E_N^{1,2}(u_t(t)).
		\end{aligned}
		\]
		This proves \eqref{eq:parabolic-main}.
	\end{proof}
	
	The following proposition shows that the spatial factor in \eqref{eq:parabolic-main} cannot be replaced by $o(\sqrt N)$, establishing the optimality of its algebraic exponent. 
	\begin{proposition}\label{prop:parabolic-sharpness}
		For every $N\ge2$, there exists a smooth stationary solution satisfying 
		\eqref{eq:parabolic-regularity} such that
		\begin{equation*}
			\sqrt N\lesssim1+\sup_t\norm{\grad(u(t)-u_N(t))}_\infty,
			\qquad \sup_t E_N^{1,\infty}(u(t))\lesssim 1.
		\end{equation*}
	\end{proposition}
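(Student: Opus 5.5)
The plan is to reuse the lower-bound construction of Lemma~\ref{prop:linfty-lower} as a time-independent solution. Let $w_N(x,y)=g_N(x)\psi(y)$ with $g_N=Jf_N$ and $\psi=\eta_2$, as in that proof. We have $\norm{\grad w_N}_\infty\lesssim1$ and $\sqrt N\lesssim\norm{\grad R_Nw_N}_\infty+N^{-1}$. This $w_N$ is only Lipschitz, not smooth, since $f_N$ is a sign function. Smoothness is asked for, so first mollify $f_N$. Pick a smooth mean-zero $\tilde f_N$ with $\norm{\tilde f_N}_\infty\le 3$ and $\norm{\tilde f_N-f_N}_{L^1}$ so small that $|(P_N\tilde f_N-P_Nf_N)(1)|\le\norm{k_N(1,\cdot)}_\infty\norm{\tilde f_N-f_N}_1$ is below $1$. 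Since $k_N(1,\cdot)$ is a fixed polynomial, this is possible. Then still $(P_N\tilde f_N)(1)\ge c\sqrt N$.

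Every other step of Lemma~\ref{prop:linfty-lower} used only $\norm{f_N}_\infty\lesssim1$, zero mean, and $(P_Nf_N)(1)\gtrsim\sqrt N$. So the same argument gives, for the smooth $u:=J\tilde f_N\otimes\psi\in\mathcal W\cap C^\infty(\overline\Om)$,
\[
\sqrt N\lesssim\norm{\grad R_Nu}_\infty+N^{-1},\qquad \norm{\grad u}_\infty\lesssim1 .
\]

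Next, take $u(t)\equiv u$ and $f:=-\Delta u\in L^2(\Om)$, which is smooth. Then $u$ is a stationary weak solution of \eqref{eq:parabolic-eq} with $u_0=u$. It lies in $C([0,T];\mathcal W)\cap W^{1,\infty}(0,T;H^1_0)$, and $u_t=0$, so \eqref{eq:parabolic-regularity} holds.

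For the Galerkin solution take $u_{N,0}=R_Nu$. Check that $u_N(t)\equiv R_Nu$ solves \eqref{eq:heat-galerkin}: indeed $(\grad R_Nu,\grad v_N)=(\grad u,\grad v_N)=(f,v_N)$. By uniqueness, $u_N(t)=R_Nu$ for all $t$. Hence
\[
\sup_t\norm{\grad(u-u_N)}_\infty\ge\norm{\grad R_Nu}_\infty-\norm{\grad u}_\infty\gtrsim\sqrt N-C .
\]
This gives $\sqrt N\lesssim1+\sup_t\norm{\grad(u(t)-u_N(t))}_\infty$.

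Finally, taking $v_N=0$ gives $E_N^{1,\infty}(u)\le\norm{\grad u}_\infty\lesssim1$.

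The only delicate point is the mollification. I expect it to be the main, though mild, obstacle: one must check that the constants of Lemma~\ref{prop:linfty-lower} depend on $f_N$ only through $\norm{f_N}_\infty$. They do, since the proof used $\norm{f_N}_2\lesssim1$ and the rank-two bound \eqref{eq:crossblock}. Both are insensitive to replacing $f_N$ by $\tilde f_N$.
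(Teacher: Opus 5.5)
Your proof is correct and has the same structure as the paper's. You take a stationary solution with $f=-\Delta u$ and Ritz initial data, so that $u_N\equiv R_Nu$ by uniqueness; you invoke the lower bound of Lemma~\ref{prop:linfty-lower}; and you take $v_N=0$ in $E_N^{1,\infty}$.

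The only difference is the smoothing step. The paper takes $f_{N,m}=\tanh(mk_N(1,\cdot))$ minus its mean and lets $m\to\infty$, using $H^1_0$-boundedness of $R_N$ and norm equivalence on the fixed space $V_N$, so Lemma~\ref{prop:linfty-lower} is used as a black box. You instead perturb only the endpoint value $(P_N\tilde f_N)(1)$ and rerun that lemma's proof. This is legitimate, since the proof uses only zero mean, $\norm{f_N}_\infty\lesssim1$, and the endpoint bound.

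One small slip: an endpoint error below $1$ gives $(P_N\tilde f_N)(1)\ge c\sqrt N-1$, not $c\sqrt N$. This is harmless because of the additive $1$ in the statement, or you can simply make the error smaller than $c\sqrt N/2$.
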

	\begin{proof}
		We first choose smooth test functions that retain the lower bound in Lemma~\ref{prop:linfty-lower}. Let $k_N$ and $f_N$ be the projection kernel and mean-zero test function from Lemma~\ref{lem:epd-endpointlower}. For $m\ge1$, define
		\[
		f_{N,m}(x):=\tanh(mk_N(1,x))-\frac12\int_I\tanh(mk_N(1,s))\,\dd s.
		\]
		Then $\int_I f_{N,m}=0$ and $\|f_{N,m}\|_\infty\le2$. For each fixed $N$, dominated convergence gives $f_{N,m}\to f_N$ in $L^2(I)$ as $m\to\infty$.
		Using the integration operator $J$ and $\eta_2(y)=\sqrt{3/8}(1-y^2)$ from the lower-bound construction, set
		\[
		w_{N,m}(x,y):=(Jf_{N,m})(x)\eta_2(y).
		\]
		Since $Jf_{N,m}$ vanishes at both endpoints, we have
		\[
		w_{N,m}\in C^\infty(\overline\Om)\cap H^1_0(\Om),\qquad \norm{\grad w_{N,m}}_\infty\lesssim 1.
		\]
		For each fixed $N$, boundedness of $R_N$ in $H^1_0(\Om)$ and equivalence of norms on $V_N$ give
		\[
		\norm{\grad R_N\bigl(w_{N,m}-(Jf_N)\eta_2\bigr)}_\infty\longrightarrow0.
		\]
		The lower bound from Lemma~\ref{prop:linfty-lower} therefore allows us to choose $m$ sufficiently large and write $w_N=w_{N,m}$, satisfying
		\[
		\norm{\grad w_N}_\infty\lesssim 1,\qquad \norm{\grad R_Nw_N}_\infty\ge c\sqrt N.
		\]
		
		Set $u(t)=w_N$, $u_0=w_N$, and $f=-\Delta w_N$. Then $u$ is a stationary solution of \eqref{eq:parabolic-eq} satisfying \eqref{eq:parabolic-regularity}, with $u_t=0$. Initialize the Galerkin approximation by $u_N(0)=R_Nw_N$. The defining identity of $R_N$ gives
		\[
		(\grad R_Nw_N,\grad v_N)=(\grad w_N,\grad v_N)=(f,v_N),\qquad v_N\in V_N.
		\]
		Consequently, $R_Nw_N$ solves the semidiscrete problem. The uniqueness therefore yields $u_N(t)=R_Nw_N$. It follows that
		\[
		\sup_{0\le t\le T}\norm{\grad(u(t)-u_N(t))}_\infty
		\ge \norm{\grad R_Nw_N}_\infty-\norm{\grad w_N}_\infty.
		\]
		The lower bounds for $\norm{\grad R_Nw_N}_\infty$ and $\norm{\grad w_N}_\infty\lesssim1$ imply $\sqrt N\lesssim1+\sup_t\norm{\grad(u(t)-u_N(t))}_\infty$. Taking $v_N=0$ in the definition of $E_N^{1,\infty}$ gives
		\[
		\begin{aligned}
			\sup_{0\le t\le T}E_N^{1,\infty}(u(t))
			&=E_N^{1,\infty}(w_N)\\
			&=\inf_{v_N\in V_N}\|\nabla(w_N-v_N)\|_\infty\\
			&\le\|\nabla(w_N-0)\|_\infty =\|\nabla w_N\|_\infty\lesssim 1.
		\end{aligned}	
		\]
		This completes the proof.
	\end{proof}
	
\section*{AI Disclosure}
During the preparation of this work, GPT-5.6 sol and GPT-6.0 Astra (OpenAI) were used to explore proof strategies and assist mathematical derivations. The authors reviewed and revised the AI-assisted material and independently verified all mathematical statements and proofs. The authors assume full responsibility for the results and the final manuscript.
	
\bibliographystyle{amsplain}
\bibliography{ref}
		
\end{document}